\documentclass[11pt]{amsart}

\usepackage{amsmath,amssymb,amsthm,mathtools}
\usepackage{enumitem}
\usepackage{aliascnt}
\usepackage[hidelinks]{hyperref}
\usepackage[nameinlink,capitalise]{cleveref}

\newtheorem{theorem}{Theorem}[section]
\newaliascnt{proposition}{theorem}
\newtheorem{proposition}[proposition]{Proposition}
\aliascntresetthe{proposition}
\newaliascnt{lemma}{theorem}
\newtheorem{lemma}[lemma]{Lemma}
\aliascntresetthe{lemma}
\newaliascnt{corollary}{theorem}
\newtheorem{corollary}[corollary]{Corollary}
\aliascntresetthe{corollary}
\theoremstyle{definition}
\newaliascnt{definition}{theorem}
\newtheorem{definition}[definition]{Definition}
\aliascntresetthe{definition}
\newaliascnt{example}{theorem}
\newtheorem{example}[example]{Example}
\aliascntresetthe{example}
\theoremstyle{remark}
\newaliascnt{remark}{theorem}
\newtheorem{remark}[remark]{Remark}
\aliascntresetthe{remark}

\newcommand{\LLY}{\mathrm{LLY}}
\newcommand{\DN}{\mathrm{DN}}
\newcommand{\Curv}{\mathrm{Curv}}
\newcommand{\Etl}{E_{\mathrm{tl}}}
\newcommand{\core}{\operatorname{core}_2}
\newcommand{\Dist}{\operatorname{dist}}

\title[Curvature-Distortion Numbers of Graphs]{Curvature-Distortion Numbers of Graphs: Nonnegative Lin--Lu--Yau Curvature}
\author{Qing Xia}
\address{School of Mathematical Sciences\\
University of Science and Technology of China\\
96 Jinzhai Road\\
Hefei 230026, Anhui Province\\
China}
\email{xq0420@mail.ustc.edu.cn}
\date{September 2026}

\subjclass[2020]{05C05, 05C12, 05C63, 05C81}
\keywords{Lin--Lu--Yau curvature, curvature-distortion number, weighted graph, tree, graph end, branch vertex, 2-core, tree-like skeleton}

\begin{document}

\begin{abstract}
We introduce the curvature-distortion number, a scale-invariant
parameter measuring the least multiplicative spread of positive edge
weights required to make a weighted discrete curvature nonnegative
everywhere. We develop the theory for Lin--Lu--Yau curvature when the
transport distance is the fixed combinatorial graph distance.

For trees, the invariant admits an explicit nonlinear fixed-point
description, which produces a canonical optimal weight that is unique
up to scaling. More importantly, the curvature-distortion number
controls the branching topology of the tree: for every finite tree
$T$,
\[
|B(T)|\le \left\lceil \DN_{\LLY}(T)\right\rceil,
\]
where $B(T)$ is the set of branch vertices. Thus the amount of weight
distortion required to achieve nonnegative curvature imposes a direct
quantitative restriction on the topological complexity of the tree.
For locally finite infinite trees, finite distortion is
classified completely: it occurs precisely for the double ray and for
one-ended trees obtained from a finite tree by attaching a single ray.

This connection between curvature distortion and tree topology extends
naturally to general connected graphs through the subgraph formed by
edges lying in no cycle of length $3$, $4$, or $5$. Whenever the
curvature-distortion number is finite, this tree-like part is a forest
unless the whole graph is a cycle of length at least $6$, and each of
its tree components inherits the corresponding distortion and
topological bounds. In particular, the tree theory yields complete
finite-distortion classifications for graphs of girth at least $6$.
\end{abstract}

\maketitle

\section{Introduction and statement of results}
\label{sec:intro}

Discrete Ricci curvature on graphs originates with Ollivier's coarse
Ricci curvature \cite{Ollivier2009} and the high-idleness curvature of Lin,
Lu, and Yau \cite{LinLuYau2011}; see also \cite{BCLMP2018}. 

We work in the weighted graph-Laplacian framework of M\"unch and
Wojciechowski \cite{MunchWojciechowski2019}, specialized so that the
edge weights determine the normalized transition probabilities while
the transport metric remains the combinatorial graph distance. Lin and Liu use the same viewpoint for prescribed
curvature flows \cite{LinLiu2026}. Related weighted curvature flows and
discrete Einstein metrics on trees are studied in
\cite{BaiLiuLai2026,Cheng2026}. Our problem is instead variational: among all
positive weights producing nonnegative curvature, we minimize their
multiplicative spread.

Let $G=(V,E)$ be connected and locally finite, with $E\ne\varnothing$, and write $d_x$ for the combinatorial degree of $x$. Let $\kappa_{\LLY}^w$ denote the weighted curvature defined in \cref{subsec:weighted-LLY}. For a positive edge weight $w:E\to(0,\infty)$, define the \emph{distortion} of $w$ by
\[
\Dist(w):=\frac{\sup_{e\in E}w_e}{\inf_{e\in E}w_e},
\]
with value $+\infty$ if the numerator is infinite or the denominator is zero, and define the \emph{$LLY$-distortion number} of $G$ by
\begin{equation}
\label{eq:DN-intro}
\DN_{\LLY}(G)
:=\inf\left\{
\Dist(w):
\kappa^w_{\LLY}(e)\ge0\ \text{for every }e\in E
\right\}.
\end{equation}
We write $\kappa_{\LLY}^{w}(G)\ge0$ if
$\kappa_{\LLY}^{w}(e)\ge0$ for every $e\in E(G)$. A positive weight satisfying $\kappa_{\LLY}^{w}(G)\ge0$ is called \emph{admissible}. An admissible weight is called
\emph{optimal} if its distortion equals $\DN_{\LLY}(G)$. A
finite-distortion weight is called \emph{normalized} if
$\inf_{e\in E}w_e=1$. For finite $E$, write
$w_{\max}:=\max_{e\in E}w_e$ and $w_{\min}:=\min_{e\in E}w_e$; then
normalization means $w_{\min}=1$, and \eqref{eq:DN-intro} is the infimum
of $w_{\max}/w_{\min}$ over admissible weights. If no admissible weight has finite distortion, we set $\DN_{\LLY}(G)=\infty$; by convention, $\DN_{\LLY}(K_1)=1$. A general curvature-distortion number is defined in \cref{def:general-DN}.

For an edge $e\in E$, write $e\in\Etl(G)$ if $e$ lies on no cycle of length $3$, $4$, or $5$, and call such an edge \emph{tree-like}. Define the \emph{tree-like skeleton} of $G$ by
\[
\mathcal S(G):=(V,\Etl(G)).
\]
A component of $\mathcal S(G)$ is called \emph{nontrivial} if it contains an edge. Thus $\Etl(G)=E$ whenever $G$ has girth at least $6$. Our first result gives a necessary condition for finite distortion.

\begin{theorem}[Forest obstruction]
\label{thm:necessary}
Let $G=(V,E)$ be a connected locally finite graph. If $\DN_{\LLY}(G)<\infty$, then either
\[
G=C_n\qquad\text{for some }n\ge6,
\]
or $\mathcal S(G)$ is a forest.
\end{theorem}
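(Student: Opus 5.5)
The plan is a local curvature computation on tree-like edges, followed by a double-counting argument around a cycle of $\mathcal S(G)$. Only the existence of one admissible weight is used, not the finiteness of its distortion.

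\textbf{Step 1 (curvature upper bound on tree-like edges).} I would first establish an upper bound for $\kappa^w_{\LLY}$ on a tree-like edge $xy$. Write $W_z:=\sum_{u\sim z}w_{zu}$. The goal is
\begin{equation*}
\kappa^w_{\LLY}(xy)\le 2\,w_{xy}\Bigl(\frac1{W_x}+\frac1{W_y}\Bigr)-2 .
\end{equation*}
This is the weighted analogue of the tree formula $\frac2{d_x}+\frac2{d_y}-2$, and I expect equality, though only the inequality is needed.

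Because $xy$ lies on no cycle of length $3$, $4$ or $5$, the neighbourhoods are well separated. Any $u\in N(x)\setminus\{y\}$ and $v\in N(y)\setminus\{x\}$ satisfy $u\ne v$ and $u\not\sim v$, and there is no path $u\,z\,v$ avoiding $x,y$. Hence $d(u,v)=3$.

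Consider the test function defined by
\begin{equation*}
f=-1 \text{ on } N(x)\setminus\{y\},\quad f(x)=0,\quad f(y)=1,\quad f=2 \text{ on } N(y)\setminus\{x\}.
\end{equation*}
By the distance computation above, $f$ is $1$-Lipschitz on its support, and I would extend it by McShane if needed. Plug $f$ into the Kantorovich dual for $W_1(m_x^\alpha,m_y^\alpha)$, where $m_x^\alpha=\alpha\delta_x+(1-\alpha)\sum_u\frac{w_{xu}}{W_x}\delta_u$. A short computation then gives a lower bound on $W_1$. Dividing by $1-\alpha$ and letting $\alpha\to1$ (the Lin--Lu--Yau limit, or the M\"unch--Wojciechowski Laplacian formulation as set up in \cref{subsec:weighted-LLY}) yields the stated upper bound. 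Consequently, admissibility forces, for every tree-like edge $xy$,
\begin{equation*}
\frac{w_{xy}}{W_x}+\frac{w_{xy}}{W_y}\ge1 .
\end{equation*}

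\textbf{Step 2 (summing around a cycle).} Suppose $\DN_{\LLY}(G)<\infty$, fix an admissible $w$, and suppose $\mathcal S(G)$ contains a cycle $C=v_1v_2\cdots v_nv_1$. Every edge of $C$ is tree-like, so $n\ge6$. Sum the inequality from Step 1 over the $n$ edges of $C$ and regroup by vertices:
\begin{equation*}
n\le\sum_{i=1}^n\frac{w_{v_{i-1}v_i}+w_{v_iv_{i+1}}}{W_{v_i}}\le n .
\end{equation*}
The right-hand inequality holds because the two cycle edges at $v_i$ contribute at most $W_{v_i}$. Hence equality holds throughout. Since all weights are positive, equality at $v_i$ means $v_i$ has no neighbours besides $v_{i\pm1}$, so $d_{v_i}=2$ for all $i$. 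Thus $C$ is a union of components of $G$, and connectedness gives $G=C=C_n$ with $n\ge6$. Otherwise $\mathcal S(G)$ has no cycle and is a forest. Since cycles are finite, local finiteness causes no trouble.

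\textbf{Main obstacle.} The main difficulty is Step 1: getting the sign conventions and the $\alpha\to1$ limit right for the weighted idleness measures. In particular, one must check that the single test function $f$ indeed produces the coefficient $2$ in front of $w_{xy}(1/W_x+1/W_y)$. I would first verify the computation on an unweighted tree edge against the known value $\frac2{d_x}+\frac2{d_y}-2$. If the paper already records the tree-edge curvature formula for weighted trees, I would simply quote it, since its proof only uses $d(u,v)=3$ for the relevant neighbours, and that holds for any tree-like edge.
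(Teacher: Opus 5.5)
Your proof is correct and follows essentially the same route as the paper: the nonnegativity condition on each tree-like edge (which the paper quotes as \cref{lem:tree-like-formula}, and which you re-derive in the needed direction from the test function taking values $-1,0,1,2$) is aggregated around the cycle so that equality is forced everywhere, leaving no room for edges off the cycle. The only difference is cosmetic. You sum the equivalent additive form $w_{xy}/W_x+w_{xy}/W_y\ge1$ and regroup by vertices, whereas the paper multiplies the log-convexity inequalities $a_i^2\ge a_{i-1}a_{i+1}$ around the cycle.
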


The converse fails even for finite graphs: \cref{ex:forest-not-sufficient} gives a seven-vertex graph for which $\mathcal S(G)$ is a forest but $\DN_{\LLY}(G)=\infty$.

For finite graphs, the hanging trees outside the $2$-core can be compressed without changing finiteness of the distortion number; see \cref{def:2-core}. Let $K=\core(G)\ne\varnothing$ and
\[
A_G:=\{x\in V(K):x\text{ has a neighbor in }V(G)\setminus V(K)\}.
\]
Let $\widehat K_G$ be obtained from $K$ by attaching one pendant vertex $x^\partial$ to each $x\in A_G$. Thus $\widehat K_G$ records where nontrivial hanging forests are attached, but not their sizes or shapes.

\begin{theorem}[$2$-core reduction]
\label{thm:core-reduction}
Let $G$ be a finite connected graph with
\[ K:=\core(G)\ne\varnothing. \]
Then
\[
\DN_{\LLY}(G)<\infty
\quad\Longleftrightarrow\quad
\DN_{\LLY}(\widehat K_G)<\infty.
\]
\end{theorem}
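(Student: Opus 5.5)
Since $G$ is finite, $\DN_{\LLY}(G)<\infty$ holds exactly when some positive weight $w$ satisfies $\kappa^w_{\LLY}(G)\ge0$. We work directly with this existence question. The key structural fact is locality: the weighted Lin--Lu--Yau curvature of an edge $xy$ depends only on the weights of edges incident to $x$ or $y$, through the normalized transition probabilities, together with the combinatorial distances among $N(x)\cup N(y)$. We split $E(G)$ into three classes: edges of $K$, the pendant-attachment edges $xv$ with $x\in A_G$ and $v\notin V(K)$, and edges lying entirely in the hanging forests. Both directions will be proved by transferring weights between $G$ and $\widehat K_G$ so that every core edge sees the same weighted local picture.

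\emph{($\Rightarrow$)} Let $w$ be admissible on $G$. For $x\in A_G$, let $s_x$ be the total $w$-weight of the edges from $x$ into the hanging forests. Define $\widehat w$ on $\widehat K_G$ by $\widehat w=w$ on $E(K)$ and $\widehat w(xx^\partial)=s_x$.

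For a core edge $xy$, the hanging neighbors of $x$ are at distance $2$ from every vertex of $N(y)\setminus\{x\}$, except possibly $y$ itself or neighbors shared through $x$. The same holds for the single vertex $x^\partial$. Because a hanging forest meets $K$ only at its root, these neighbors have no other route back into the core. Hence all hanging mass at $x$ plays an identical role in the optimal transport, and merging it into $x^\partial$ leaves the transport cost unchanged. This gives $\kappa^{\widehat w}(xy)=\kappa^w(xy)\ge0$.

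It remains to treat the pendant edge $xx^\partial$. Its curvature has an explicit formula, as for leaves in the tree theory: the walk at $x^\partial$ moves only to $x$. We will show that $\kappa(xx^\partial)\ge0$ is a lower bound on the relative weight of the pendant edge at $x$. We then check that this bound is implied by nonnegativity of the attachment edges $xv$ in $G$, since each $v$ carries at least as much remote mass as a leaf.

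\emph{($\Leftarrow$)} Let $\widehat w$ be admissible on $\widehat K_G$. Each hanging tree $T_x$ rooted at $x\in A_G$ is a finite tree with its root attached to the core. We weight $T_x$ by a rescaled canonical tree weight from the fixed-point description for trees, arranged so that the total weight at $x$ into $T_x$ equals $\widehat w(xx^\partial)$. The core edges then see the same local data as before, so they remain nonnegative.

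The edges inside $T_x$ and the attachment edges are the main obstacle. There we must choose the weights along $T_x$ to decay (or grow) geometrically toward the leaves, as in the tree fixed-point equations, so that every internal edge is nonnegatively curved. We also need compatibility at the root with the prescribed root mass and the core's pull. Our first attempt is to solve the tree recursion bottom-up from the leaves. The root edge then has a one-parameter freedom, namely a global scaling of $T_x$. We will use it, together with scaling the core weights by a large constant if necessary (curvature is scale invariant), to make the attachment edge curvature nonnegative. Finiteness of $G$ guarantees that all resulting ratios are finite, so $\DN_{\LLY}(G)<\infty$.
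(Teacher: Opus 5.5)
\textbf{The gap: weighting the hanging trees in ($\Leftarrow$).} To keep the core curvatures, the first-layer weights at each $x\in A_G$ must sum to $b_x:=\widehat w_{xx^\partial}$. The core load $c_x:=d_x^{\widehat w}-b_x>0$ is fixed, so the ratio $b_x/c_x$ is prescribed by $\widehat w$, and this uses up the global scaling of $T_x$. Rescaling the core weights but not $b_x$ changes the transition probabilities at the vertices of $A_G$, so admissibility of $\widehat w$ no longer controls the core edges. Rescaling both is a global rescaling and gains nothing.

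With no free parameter left, a rescaled canonical weight of $T_x$ fails in general. Every attachment edge $xv$ is a bridge, so by \eqref{eq:local-nonnegative} it needs $w_{xv}^2\ge(c_x+b_x-w_{xv})S_v$, where $S_v$ is the total weight of the child edges of $v$. If $xv$ is non-pendant in $T_x$, the rescaled canonical weight satisfies $w_{xv}^2=(b_x-w_{xv})S_v$. The extra term $c_xS_v>0$ then makes $\kappa_{\LLY}^w(x,v)<0$ at every scale. It can fail even when $x$ is a leaf of $T_x$. Take a triangle $xyz$ with a path $xv\ell$ attached at $x$. The unit weight is admissible on $\widehat K_G$, and the edge $xv$ requires $w_{v\ell}\le\frac12$. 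The rescaled canonical weight instead gives $w_{v\ell}=1$ and $\kappa_{\LLY}^w(x,v)=-\frac13$.

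\textbf{The missing idea.} Build the weights outward from the root, as in \cref{lem:rooted-tree-extension}, not from the leaves. Split $b_x$ arbitrarily among the first-layer edges. Then, at each vertex $v$ with parent $u$, parent edge weight $a$, and already determined $P_{uv}:=d_u^w-a$, give the child edges of $v$ any positive weights with total $S_v\le a^2/P_{uv}$. Every hanging edge is a bridge, so \eqref{eq:local-nonnegative} for $uv$ is exactly $a^2\ge P_{uv}S_v$. Hence all hanging edges are nonnegatively curved without any rescaling. The core edges are unaffected: deeper weights do not enter $\mu_x^{\alpha,w}$ or $\mu_y^{\alpha,w}$, and the first-layer weights enter only through their total $b_x$.

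\textbf{Remarks on ($\Rightarrow$).} You use the same $\widehat w$ as the paper. Your mass-merging transport argument is a valid substitute for the dual-function proof of \cref{lem:boundary-aggregation}, once the distances are stated correctly. For a hanging neighbor $v$ of $x$ and every $t$ outside the hanging component of $v$, one has $d_G(v,t)=1+d_G(x,t)$, which matches $x^\partial$ in $\widehat K_G$. This distance is $3$, not $2$, for neighbors of $y$ not adjacent to $x$. Finally, the pendant edge $xx^\partial$ needs no comparison with attachment edges. It is a bridge, so \eqref{eq:tree-like-curvature} gives $\kappa_{\LLY}^{\widehat w}(x,x^\partial)=2\widehat w_{xx^\partial}/d_x^{\widehat w}>0$ unconditionally.
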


Thus finiteness is determined by the $2$-core together with the core
vertices carrying nontrivial hanging forests, although the numerical value
need not be preserved. See \cref{cor:core-extension,cor:girth6} for useful
consequences.

We next turn to trees. An edge is \emph{pendant} if it is incident with a leaf, and $E_{\mathrm{np}}(T)$ denotes the set of non-pendant edges. The nonlinear map $F_T$ is defined in \cref{def:distortion-map}. The next theorem identifies $\DN_{\LLY}(T)$ through its unique fixed point and the associated normalized optimal weight.

\begin{theorem}[Finite-tree fixed-point formula]
\label{thm:tree-fixed-point}
Let $T$ be a finite tree with $E(T)\ne\varnothing$. Then $F_T$ has a
unique fixed point
\[
w_T^\star\in[1,\infty)^{E(T)}.
\]
Moreover,
\[
\DN_{\LLY}(T)=\max_{e\in E(T)}w_T^\star(e).
\]
The fixed point is coordinatewise minimal among normalized admissible
weights and is the unique normalized optimal weight. Equivalently, all
optimal weights differ only by a global positive scalar. We call
$w_T^\star$ the \emph{canonical weight} of $T$.

Every pendant edge has canonical weight $1$, while every non-pendant
edge has zero Lin--Lu--Yau curvature under $w_T^\star$.
\end{theorem}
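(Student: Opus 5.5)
The plan is to reduce admissibility on a tree to a coordinatewise inequality $w\ge F_T(w)$, and then to run a monotone (Kleene-type) iteration together with a ratio/propagation argument. I take $F_T$ from \cref{def:distortion-map} to have the form dictated by the tree curvature formula below. If the paper's definition differs, the argument should be adjusted accordingly.

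\textbf{Step 1 (local reduction).} On a tree, the only transport for an edge $e=xy$ moves mass from $N(x)\setminus\{y\}$ to $N(y)\setminus\{x\}$ at distance $3$. I expect the weighted formula from \cref{subsec:weighted-LLY} to give
\[
\kappa^w_{\LLY}(xy)=2\frac{w_e}{W_x}+2\frac{w_e}{W_y}-2
\]
for non-pendant $e$, where $W_x=\sum_{z\sim x}w_{xz}$, and $\kappa^w_{\LLY}(e)\ge0$ automatically for pendant $e$. Write $S_x=W_x-w_e$ and $S_y=W_y-w_e$. Clearing denominators, $w_e/(w_e+S_x)+w_e/(w_e+S_y)\ge 1$ is equivalent to $w_e^2\ge S_xS_y$. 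So, modulo scaling, a normalized weight is admissible iff $w\ge1$ and $w_e\ge F_T(w)(e)$ for every $e$. Here $F_T(w)(e)=1$ on pendant edges and $F_T(w)(e)=\sqrt{S_x(w)S_y(w)}$ on non-pendant edges. If $w\ge1$ then $S_x,S_y\ge1$, so $F_T$ maps $[1,\infty)^{E}$ into itself. $F_T$ is continuous and monotone there, and on non-pendant coordinates it is positively $1$-homogeneous and strictly increasing in each coordinate on which it actually depends.

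\textbf{Step 2 (existence of an admissible weight, then the minimal fixed point).} First I build one admissible weight. Fix a non-pendant edge $e_0$ (if there is none, $T$ is a star and everything is trivial with $w\equiv1$). Let $\Delta$ be the maximum degree and assign weights outward by distance $r$ from $e_0$. Choose $w_1\le w_0/\Delta$ on the edges adjacent to $e_0$. Then recursively choose the edges at distance $r+1$ with weight at most $w_r^2/\bigl(2\Delta(w_{r-1}+\Delta w_r)\bigr)$; this is always possible with positive values, and it forces $w_r^2\ge S_xS_y$ at distance-$r$ edges. Pendant edges carry no constraint. After rescaling so that the minimum is $1$, this weight $u$ satisfies $u\ge F_T(u)$. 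Next set $w^0\equiv1$ and $w^{k+1}=F_T(w^k)$. Since $w^1\ge w^0$ and $F_T$ is monotone, the sequence is nondecreasing. By induction it is bounded by every normalized admissible $u$, because $w^k\le u$ gives $w^{k+1}\le F_T(u)\le u$. Hence it converges to a fixed point $w^\star_T$ that is coordinatewise minimal among normalized admissible weights. Pendant edges get value $1$, and non-pendant edges satisfy $w_e^2=S_xS_y$, i.e.\ they have zero curvature. This gives $\DN_{\LLY}(T)\le\max w^\star_T$. Conversely, any admissible weight normalized to minimum $1$ dominates $w^\star_T$, so $\DN_{\LLY}(T)=\max w^\star_T$.

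\textbf{Step 3 (uniqueness).} This is the main obstacle, and I would handle it by a propagation argument in the line graph.
\begin{itemize}
\item \emph{Uniqueness of the fixed point.} Let $v\in[1,\infty)^E$ be another fixed point; by minimality $v\ge w^\star_T$. Put $t=\max_e v_e/w^\star_T(e)$. At an edge $e$ attaining $t$ that is non-pendant, homogeneity gives
\[
v_e=\sqrt{S_x(v)S_y(v)}\le t\,w^\star_T(e)=v_e .
\]
By strict monotonicity, equality forces every adjacent edge to attain the ratio $t$ as well. The set of non-pendant edges is connected in the line graph, and every edge is adjacent to a non-pendant one (outside the star case). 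So the ratio $t$ propagates to a pendant edge, where both weights are $1$. Hence $t=1$ and $v=w^\star_T$.
\item \emph{Uniqueness of the normalized optimal weight.} For a normalized optimal $u$, we have $u\ge w^\star_T$ and $\max u=\max w^\star_T=M$. At an edge $e$ where $w^\star_T(e)=M$ we get $u_e=w^\star_T(e)$, and this edge is non-pendant unless $M=1$, which is the star case. There
\[
u_e\ge\sqrt{S_x(u)S_y(u)}\ge\sqrt{S_x(w^\star_T)S_y(w^\star_T)}=w^\star_T(e)=u_e .
\]
Equality forces $u=w^\star_T$ on the adjacent edges, and the same propagation yields $u=w^\star_T$ everywhere.
\end{itemize}
Scaling then shows that all optimal weights are positive multiples of $w^\star_T$.
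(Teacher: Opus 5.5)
Your proof is correct, and its ingredients are the paper's: the reduction of admissibility to $w_e^2\ge S_xS_y$, monotone iteration of $F_T$, max-ratio propagation for uniqueness, and propagation from an edge of maximal canonical weight for the optimal weight. The genuine difference is the direction of the iteration, which changes the logical order.

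The paper proceeds as follows:
\begin{enumerate}
\item It obtains an admissible weight from \cref{lem:rooted-tree-extension}.
\item It iterates \emph{downward} from that weight via \cref{lem:monotone-iteration} to reach some fixed point.
\item It proves uniqueness by the ratio argument applied to two arbitrary fixed points.
\item Only then does it deduce minimality: iterating downward from any normalized admissible $z$ must land on the unique fixed point, which therefore lies below $z$.
\end{enumerate}

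You instead iterate \emph{upward} from $\mathbf 1$ and use your explicit admissible weight only as an upper barrier. This produces the least fixed point directly. Coordinatewise minimality, and hence $\DN_{\LLY}(T)=\max_e w_T^\star(e)$, therefore comes before and independently of uniqueness. Your ratio argument then gets to assume $v\ge w_T^\star$, which is convenient but not needed, since the paper's version works for any pair of fixed points.

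For the optimal weight, you propagate equality outward from the edge where $w_T^\star=D$ through the connected set of non-pendant edges. The paper instead propagates a strict inequality $z_f>w_T^\star(f)$ inward along the edge path to that edge. These are contrapositive forms of the same monotonicity step.

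The paper's downward iteration is the same device it later reuses for infinite trees and for skeleton components. Your least-fixed-point route buys minimality without any appeal to uniqueness.

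One small point on clarity: in your level-by-level existence construction, read $w_r$ as the \emph{common} weight on all edges at distance $r$ from $e_0$. With only ``at most'' bounds and unequal weights within a level, the inequality $w_e^2\ge S_xS_y$ is not guaranteed by your estimate. With constant level weights it holds as you claim.
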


Let $T$ be a finite tree. If $v$ is a vertex of degree $2$ with
neighbors $x$ and $y$, suppressing $v$ means deleting $v$ and the
edges $xv,vy$, and adding the edge $xy$. The \emph{$2$-suppression}
$\overline T$ of $T$ is obtained by repeatedly suppressing degree-two
vertices until none remain. Suppression preserves the branch vertices in the natural correspondence, and hence
\begin{equation}
\label{eq:branch-preserved}
|B(\overline T)|=|B(T)|.
\end{equation}
The following theorem shows that suppressing degree-two vertices does not increase the invariant.

\begin{theorem}[Suppression monotonicity]
\label{thm:suppression-intro}
For every finite tree $T$,
\[
\DN_{\LLY}(\overline T)\le\DN_{\LLY}(T).
\]
\end{theorem}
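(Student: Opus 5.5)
The plan is to reduce the statement to constructing \emph{one} admissible weight on $\overline T$, and to use \cref{thm:tree-fixed-point} to do the rest. Since $w_{\overline T}^\star$ is coordinatewise minimal among normalized admissible weights on $\overline T$, any normalized admissible weight $w'$ on $\overline T$ gives
\[
\DN_{\LLY}(\overline T)=\max w^\star_{\overline T}\le \max w'.
\]
So it suffices to start from the canonical weight $w=w_T^\star$ (normalized, with $\max w=\DN_{\LLY}(T)$) and produce an admissible $w'$ on $\overline T$ with $\min w'\ge 1$ and $\max w'\le \max w$. Since suppressions can be done one at a time, I would argue by induction on the number of degree-two vertices, using the general chain only as a guide.

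\textbf{Step 1: tree curvature as an inequality.} I first need an explicit form of the curvature condition on a tree edge $xy$. I expect it to be
\[
\frac{w_{xy}}{W_x}+\frac{w_{xy}}{W_y}\ge 1, \qquad W_x:=\sum_{z\sim x}w_{xz},
\]
when both endpoints have other neighbours, and automatic for pendant edges. This comes from the optimal coupling along the unique geodesics in the tree. Clearing denominators rewrites it multiplicatively:
\[
w_{xy}^2\ge X\,Y,\qquad X:=W_x-w_{xy},\quad Y:=W_y-w_{xy}.
\]

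\textbf{Step 2: the suppressed path.} Let the path be $x=v_0,\dots,v_k=y$ with interior vertices of degree $2$ and edge weights $a_1,\dots,a_k$. The interior conditions become $a_i^2\ge a_{i-1}a_{i+1}$, so the ratios $r_i:=a_{i+1}/a_i$ are nonincreasing. The end conditions are $a_1^2\ge Xa_2$ and $a_k^2\ge Y a_{k-1}$, with $X,Y$ the weights of the other edges at $x$ and $y$. Multiplying the end conditions and using $r_{k-1}\le r_1$ gives
\[
XY\le a_1a_k\,\frac{r_{k-1}}{r_1}\le a_1a_k .
\]
Hence a new edge weight $w'_{xy}$ with $w'^2_{xy}\ge XY$ and $w'_{xy}\in[\min_i a_i,\max_i a_i]$ exists, and the new edge $xy$ is nonnegatively curved. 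If $x$ or $y$ is a leaf, the new edge is pendant and nothing needs checking.

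\textbf{Step 3 (main obstacle): the neighbouring edges.} Changing the weight at $x$ from $a_1$ to $w'_{xy}$ changes $W_x$. Since $w_{xz}/W_x$ decreases in $W_x$, other edges $xz$ stay admissible as long as $w'_{xy}\le a_1$, and symmetrically we need $w'_{xy}\le a_k$. So the ideal choice is $w'_{xy}=\min(a_1,a_k)$, which requires
\[
\min(a_1,a_k)^2\ge XY.
\]
This does \emph{not} follow from Step 2 alone; I would try to get it from the zero-curvature property of $w^\star_T$ on non-pendant edges.

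For a single suppression ($k=2$), the end conditions hold with equality at the canonical weight: $a_1^2=Xa_2$ and $a_2^2=Ya_1$. Then $XY=a_1a_2$, so the ideal choice cannot work unless $a_1=a_2$. If $a_2>a_1$, I would instead take $w'_{xy}=a_2$ and multiply every edge on the $x$-side component of $\overline T-xy$ by $\lambda=a_2/a_1$. All transition probabilities on that side, including $p(x,y)$, are then unchanged, so every curvature is preserved. The cost is that the distortion may grow by the factor $\lambda$.

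To bound this cost I would combine two facts: unimodality of the $a_i$ (from log-concavity), and the fact that along the path toward $y$ the canonical weights already exceed $a_1$ by the same ratio. The goal is to show that the maximum over the rescaled side stays below $\max w$. If that comparison fails, the fallback is to use the $x$-side rescaling only to bootstrap an admissible weight, apply $F_{\overline T}$ iteratively, and use monotonicity of $F_{\overline T}$ to show the iterates decrease to the fixed point with maximum at most $\max w_T^\star$.
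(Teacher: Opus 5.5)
\textbf{There is a genuine gap.} Your single-suppression construction is the right one, and it is the one the paper uses. You keep the $y$-side, give the new edge weight $a_2$, and multiply the whole $x$-side by $\lambda=a_2/a_1$. Your check that every curvature is preserved and that $xy$ gets zero curvature is correct. But the proposal stops exactly at the step that makes the theorem true: showing that the rescaled weight still has distortion at most $D=\DN_{\LLY}(T)$.

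The tools you name do not supply that step. Unimodality and log-concavity only concern the two path edges $xv,vy$. They say nothing about weights deeper in the $x$-side subtree, which a priori could be close to $D$; multiplying those by $\lambda>1$ would then exceed $D$. The fallback is circular. Monotone iteration of $F_{\overline T}$ from the rescaled weight $y$ gives a fixed point $\le y$, so it bounds $\max w^\star_{\overline T}$ only by $\max y$, which is precisely what you could not control. Monotonicity of $F_{\overline T}$ gives no comparison with $\max w_T^\star$.

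\textbf{The missing idea: canonical weights decrease away from $v$ on the heavier side.} This is the one-outgoing-edge property, \cref{lem:one-outgoing}.
\begin{itemize}
\item Suppose $a_1<a_2$. Then $a_1^2=Xa_2$ gives $X<a_1$, so $d_x^{w^\star}=X+a_1<2a_1$, i.e.\ $a_1/d_x^{w^\star}>1/2$.
\item Let $xz$ be any other non-pendant edge at $x$. Then $w^\star_{xz}\le d_x^{w^\star}-a_1<d_x^{w^\star}/2<a_1$.
\item Zero curvature on $xz$ reads $w^\star_{xz}/d_x^{w^\star}+w^\star_{xz}/d_z^{w^\star}=1$. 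This forces $w^\star_{xz}/d_z^{w^\star}>1/2$, so the same situation recurs at $z$ with $xz$ in the role of $xv$.
\item Inducting outward through the finite tree, every non-pendant edge on the $x$-side has canonical weight $<a_1$. Pendant edges have weight $1\le a_1$.
\item Multiplying by $\lambda=a_2/a_1\ge1$ therefore puts all $x$-side weights in $[1,a_2]\subseteq[1,D]$.
\item The $y$-side and the new edge already lie in $[1,D]$, so the new weight has distortion at most $D$.
\end{itemize}
Without this propagation argument, the bound $\DN_{\LLY}(T')\le\DN_{\LLY}(T)$ for a single suppression is not established.

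\textbf{Minor point.} In the leaf case, ``nothing needs checking'' is too quick. You must give the new pendant edge the old weight of $vy$. Otherwise $d_y^{w}$ changes and the other edges at $y$ may lose nonnegative curvature.
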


The inequality can be strict. 
A vertex $v$ is called a \emph{branch vertex} if $d_v\ge3$. Writing
\[
B(T):=\{v\in V(T):d_v\ge3\},
\]
the canonical weight also yields a sharp global restriction on the number of branch vertices.

\begin{theorem}[Branch-vertex bound]
\label{thm:branch-intro}
Every finite tree $T$ satisfies
\[
|B(T)|\le \left\lceil \DN_{\LLY}(T)\right\rceil.
\]
\end{theorem}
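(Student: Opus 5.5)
The plan is to reduce the bound to a counting inequality for the canonical weight $w:=w_T^\star$ of \cref{thm:tree-fixed-point}. Write $M:=\max_e w(e)=\DN_{\LLY}(T)$ and recall that $w\ge1$ coordinatewise. If $|B(T)|\le1$ there is nothing to prove, since $\DN_{\LLY}(T)\ge1$, so I assume $|B(T)|\ge2$. In that case $T$ has non-pendant edges.

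\textbf{Step 1: edge condition.} First I would make the weighted curvature explicit on a tree edge $e=xy$. Put $W_v:=\sum_{u\sim v}w_{vu}$ and $p_x(y):=w_e/W_x$. I expect the transport computation of \cref{subsec:weighted-LLY} to give
\[
\kappa^w_{\LLY}(xy)=2p_x(y)+2p_y(x)-2 .
\]
This reduces to $2/d_x+2/d_y-2$ in the unweighted case. I would verify it using the fact that in a tree every other neighbour of $x$ is at distance $2$ from every other neighbour of $y$. Since every non-pendant edge is curvature-flat under $w$, this yields
\[
\frac{w_e}{W_x}+\frac{w_e}{W_y}=1\qquad\text{for every }e=xy\in E_{\mathrm{np}}(T).
\]

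\textbf{Step 2: deficit identity.} Let $T'$ be the subtree spanned by $E_{\mathrm{np}}(T)$; its vertices are the non-leaf vertices of $T$. Sum the identity over $E_{\mathrm{np}}(T)$ and regroup by vertices, using $|V(T')|=|E_{\mathrm{np}}(T)|+1$. This gives
\[
\sum_{v\in V(T')}\delta_v=1,\qquad \delta_v:=\frac{\sum_{\text{pendant }e\ni v}w_e}{W_v}=\frac{p_v}{W_v}.
\]
Here $p_v$ is the number of leaves adjacent to $v$, because pendant edges have weight $1$ by \cref{thm:tree-fixed-point}. So the total pendant deficit over $T'$ is exactly $1$. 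The theorem then follows once I show that each branch vertex carries, directly or by a charge transferred along $T'$, a deficit of at least $1/M$. That gives $|B(T)|/M\le1$, hence $|B(T)|\le M\le\lceil M\rceil$. Alternatively, a slightly weaker per-vertex estimate combined with integrality of $|B(T)|$ would suffice for the ceiling.

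\textbf{Step 3: charging.} I would split the branch vertices into three kinds and charge each one.
\begin{enumerate}[label=(\roman*)]
\item A branch vertex $v$ that is a leaf of $T'$ has one non-pendant edge of weight at most $M$ and $p_v\ge2$. Hence $\delta_v\ge 2/(M+2)$, which is at least $1/M$ once $M\ge2$.
\item A branch vertex with pendant neighbours that is internal to $T'$ has $\delta_v\ge1/W_v$. This is too weak on its own.
\item A branch vertex with $p_v=0$ has degree at least $3$ in $T'$, so it forces extra leaves of $T'$. Each such leaf carries deficit at least $1/(M+1)$.
\end{enumerate}
To handle (ii) and (iii), I would route charge from each branch vertex toward the leaves of $T'$ through disjoint branches. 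I would use the flatness identity along each path of $T'$ to propagate lower bounds on the ratios $w_e/W_v$. In particular, a degree-two vertex of $T'$ has no pendant edges, so its two incident ratios sum to $1$. This shows the flatness constraints transmit along suppressed paths, consistent with \cref{thm:suppression-intro}. I would first carry out the argument on $\overline T$, where it should be cleanest, and then invoke \cref{thm:suppression-intro} together with \eqref{eq:branch-preserved}.

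\textbf{Main obstacle.} The hard step is the per-branch-vertex lower bound $1/M$ in cases (ii) and (iii). The crude bounds only give roughly $1/(M+1)$, which would yield $|B(T)|\le M+1$ instead of $\lceil M\rceil$. I would try to close this gap using the constraint $W_v\le 2\max_{e\ni v}w_e$, which follows from the flatness identity at the heavier endpoint. I would also use that at most one edge at each vertex can have ratio exceeding $1/2$. If the gap still does not close, the fallback is to exploit integrality: show that the sum of deficits is strictly less than $|B(T)|/(M+1)$ with equality impossible, so that $|B(T)|<M+1$ and hence $|B(T)|\le\lceil M\rceil$.
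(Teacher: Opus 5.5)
Your Steps 1 and 2 are correct. Summing the flatness identity over $E_{\mathrm{np}}(T)$ does give $\sum_{v\in V(T')}\delta_v=1$, and passing to $\overline T$ via \cref{thm:suppression-intro} and \eqref{eq:branch-preserved} is exactly the paper's reduction. The gap is Step 3, and the target you set there is false. Take \cref{ex:three-center} with $m=2$, $r=1$, a tree with no degree-two vertices:
\begin{itemize}
\item $|B(T)|=3>M=1+\sqrt3$;
\item the deficits are $\delta_x=\delta_z=(3-\sqrt3)/3$ and $\delta_y=(2\sqrt3-3)/3\approx0.155<1/M\approx0.366$;
\item since $3/M>1$, no redistribution of the total deficit $1$ can give every branch vertex $1/M$.
\end{itemize}
So one branch vertex must be left uncharged. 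The true bound is $|B|-1<M$, or $|B|\le M$ when there is a terminal tie.

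Two of your auxiliary claims also fail. The constraint $W_v\le 2\max_{e\ni v}w_e$ holds only at vertices that have an outgoing or tie edge. It fails at $y$ here, since $W_y=3+2\sqrt3>2+2\sqrt3$, and $y$ is precisely the vertex that must be exempted. Your fallback is stated in the wrong direction: to get $|B|<M+1$ from $\sum_v\delta_v=1$ you must show $1>|B|/(M+1)$, not the reverse.

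What is missing is a mechanism that selects the exempt vertex and charges the others. The paper supplies it in three steps.
\begin{itemize}
\item It orients each non-pendant edge toward the endpoint of larger weighted degree. By \cref{lem:one-outgoing}, together with the fact that a branch vertex cannot carry two ties, each skeleton vertex has at most one edge not pointing into it.
\item Hence, by \cref{lem:terminal-configuration}, all edges point to a unique root $r$ or to a single tie edge.
\item The quantitative core is \cref{lem:strong-rooted-load}: if $uv$ points from $u$ to $v$ (or is a tie), then $A_{u\mid v}\ge 2m(u\mid v)$ and $w_{uv}>A_{u\mid v}$ (resp.\ $=$), where $m(u\mid v)$ counts branch vertices on $u$'s side. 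It is proved by strong induction. The cases $p_u\in\{0,1\}$, which are your cases (ii) and (iii), are handled by showing that each incoming child edge gains $w_i-A_i>1$.
\end{itemize}

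In your language, summing the flatness identity over the branch of $T'$ behind $u$ shows that this branch carries total deficit exactly $\theta_{u\mid v}=A_{u\mid v}/(A_{u\mid v}+w_{uv})$. The load lemma then gives $M\theta_{u\mid v}>m(u\mid v)$. Adding over the branches at $r$ yields $|B|-1<M\sum_i\theta_i\le M$, and $\delta_r$ never needs a lower bound. Without this inductive load bound along the orientation, the vertices with $p_v=0$ (which have zero deficit) and the internal vertices with few leaves are not controlled. So the proposal does not yet prove the theorem.
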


The fixed-point theory extends to locally finite infinite trees.

\begin{theorem}[Infinite-tree fixed points and exhaustion]
\label{thm:infinite-fixed-exhaustion-intro}
Let $T$ be a locally finite infinite tree. Then
\[
\DN_{\LLY}(T)<\infty
\]
if and only if $F_T$ admits a bounded normalized fixed point. Whenever
these conditions hold, the bounded normalized fixed point is unique;
denote it by $w_T^\star$. It is the unique normalized optimal weight,
and
\begin{equation}
\label{eq:infinite-fixed-formula-intro}
\DN_{\LLY}(T)=\sup_{e\in E(T)}w_T^\star(e).
\end{equation}
We call $w_T^\star$ the \emph{canonical weight} of $T$. Every
non-pendant edge has zero Lin--Lu--Yau curvature under $w_T^\star$,
while every pendant edge has weight $1$.
Moreover, for every exhaustion
\[
T_1\subset T_2\subset\cdots,
\qquad
\bigcup_{n\ge1}T_n=T,
\]
by finite subtrees,
\begin{equation}
\label{eq:exhaustion-intro}
\DN_{\LLY}(T_n)\nearrow\DN_{\LLY}(T),
\end{equation}
and the canonical weights $w_{T_n}^\star$ converge pointwise on
$E(T)$ to $w_T^\star$.
\end{theorem}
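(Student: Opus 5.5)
I will derive the infinite-tree statement from the finite theory of \cref{thm:tree-fixed-point} by a monotone-exhaustion argument. Throughout I use two structural features of $F_T$ that should come out of \cref{def:distortion-map}: it is \emph{local}, since $F_T(w)(e)$ depends only on the weights of edges adjacent to $e$, and it is \emph{monotone}, since increasing weights does not decrease $F_T(w)$. For a non-pendant edge $xy$ of a tree, the Lin--Lu--Yau curvature only involves the neighbourhoods of $x$ and $y$, and the condition $\kappa^w_{\LLY}(xy)\ge0$ should be equivalent to $w_{xy}\ge F_T(w)(xy)$, with the right-hand side increasing in the other weights at $x$ and $y$. Pendant edges impose only the normalization $w\ge1$. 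From this I would conclude that a normalized weight on $T$ is admissible if and only if $w\ge F_T(w)$ coordinatewise.

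The first step is exhaustion monotonicity. Let $T_n\subset T$ be a finite subtree and let $w$ be a normalized admissible weight on $T$. I restrict $w$ to $T_n$. Edges of $T_n$ that are pendant in $T_n$ but not in $T$ carry weight at least $1$, so they only need normalization. At an interior edge of $T_n$, some neighbouring edges have been deleted, and the curvature inequality has to be checked there. I expect deleting neighbours to make the inequality easier, but this is the point to verify against the explicit formula. Granting it, the restriction $w|_{T_n}$ is a supersolution, so the minimality in \cref{thm:tree-fixed-point} gives $w^\star_{T_n}\le w|_{T_n}$. The same argument applied to $T_n\subset T_{n+1}$ shows that $w^\star_{T_n}\le w^\star_{T_{n+1}}$ on $E(T_n)$, and hence $\DN_{\LLY}(T_n)$ is nondecreasing.

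The second step is the limit and the equivalence. Suppose $\DN_{\LLY}(T)<\infty$ and take a normalized admissible $w$ with bounded distortion. Then $w^\star_{T_n}(e)$ increases in $n$ and is bounded by $w(e)$, so it converges to a limit $w^\star$. The limit satisfies $w^\star\le w$ and $\sup w^\star\le\sup w$. An edge $e$ is non-pendant in $T$ exactly when it is eventually non-pendant in $T_n$, and by locality $F_{T_n}$ agrees with $F_T$ at $e$ for large $n$. Continuity of $F_T$ then passes the fixed-point equation to the limit, so $w^\star$ is a bounded normalized fixed point. Conversely, any bounded normalized fixed point is admissible with finite distortion, which gives the equivalence. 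Taking the infimum over $w$ yields $\sup w^\star\le\DN_{\LLY}(T)$, and equality holds because $w^\star$ is itself admissible. Since $\DN_{\LLY}(T_n)=\max w^\star_{T_n}$ is increasing and bounded by $\sup w^\star$, and every edge value of $w^\star$ is a limit of values of $w^\star_{T_n}$, we get $\DN_{\LLY}(T_n)\nearrow\DN_{\LLY}(T)$. This limit does not depend on the exhaustion, because $w^\star$ is the minimal normalized admissible weight.

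The third step is uniqueness, which I expect to be the main obstacle. Minimality of $w^\star$ and the zero-curvature property of non-pendant edges follow from the construction. The hard part is showing that every bounded normalized fixed point $u$, and every normalized optimal weight, equals $w^\star$. In the finite case this presumably uses a maximum-principle argument at the edge where $u/w^\star$ is maximal. For infinite trees that maximum may not be attained, so I would run the argument with an approximate maximum. Let $\lambda=\sup u/w^\star\ge1$. Because the fixed-point relation is homogeneous or sub-homogeneous (scaling), $\lambda w^\star$ is a supersolution. At edges where $u/w^\star$ is nearly $\lambda$, a strict-contraction property of $F_T$ should then force either $\lambda=1$ or a contradiction with normalization. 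The normalization involves $\inf u=1$, which in an infinite tree may be attained only along a sequence, for example along a ray. I would exploit that pendant-type behaviour propagates along rays and keep quantitative control using the bound $\sup u<\infty$. Finally, an optimal normalized weight $u$ is a supersolution with $\sup u=\DN_{\LLY}(T)$. If it were not a fixed point, I would lower it strictly on some edge and apply the same approximate-maximum argument to show $u=w^\star$.
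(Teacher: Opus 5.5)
\textbf{Steps 1 and 2.} These are correct. For existence and exhaustion they are also cleaner than the paper's route. Each finite canonical weight lies below every feasible vector in $[1,\infty)^{E(T_n)}$, so you get $w^\star_{T_n}\le w^\star_{T_{n+1}}|_{E(T_n)}$. This gives monotone convergence of the whole sequence to the coordinatewise minimal feasible vector on $T$, independent of the exhaustion. The paper instead extracts a diagonal subsequence and only gets convergence of the full sequence after uniqueness is proved.

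Two small repairs are needed. First, \cref{thm:tree-fixed-point} asserts minimality only among \emph{normalized} weights, and $w|_{T_n}$ or $w^\star_{T_{n+1}}|_{E(T_n)}$ need not have minimum $1$. Cite \cref{lem:monotone-iteration} together with uniqueness of the fixed point in $[1,\infty)^{E(T_n)}$ instead. Second, if $\lim_n\DN_{\LLY}(T_n)$ is finite, the increasing limit of the canonical weights is already a bounded fixed point. This covers the case $\DN_{\LLY}(T)=\infty$ of the exhaustion claim.

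\textbf{The gap is Step 3.} You do not prove uniqueness of the bounded normalized fixed point or of the normalized optimal weight, and the approximate-maximum plan cannot do it as described.

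\begin{itemize}
\item On a non-pendant coordinate, $F_T(\lambda x)=\lambda F_T(x)$ for $\lambda\ge1$, so $F_T$ has no strict contraction. The only rigidity comes from pendant edges, where every fixed point equals $1$.
\item From $u_e/w^\star_e\ge\lambda-\varepsilon$, the fixed-point equations give only ratio $\ge\lambda-C\varepsilon$ on adjacent edges, hence $\lambda-C^k\varepsilon$ after $k$ steps. Near-maximizers may lie arbitrarily far from every pendant edge, and the double ray has no pendant edge at all.
\item The ray $v_0v_1v_2\cdots$ with leaf $v_0$ shows what actually excludes competitors. The weights $a_j=\rho^j$ solve every fixed-point equation for every $\rho\ge1$. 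Only boundedness along the entire ray rules out $\rho>1$, which is a global fact, not a local contraction.
\item Your optimal-weight argument fails the same way. Propagating $u_f>w^\star_f$ along a path, the guaranteed gap shrinks by a factor of order $1/(2D)$ per edge, with $D=\DN_{\LLY}(T)$. So you get a contradiction only if $\sup w^\star$ is actually attained.
\end{itemize}

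\textbf{What is missing is structural input.} The paper first proves \cref{thm:ends-intro}, using only the existence of a bounded optimal fixed point, not uniqueness. The finite branch bound passes to $T$ via \cref{lem:infinite-branch-set}, and \cref{lem:ray-rigidity} with \cref{lem:one-outgoing} excludes two ends. Hence $T$ is one of two types:
\begin{itemize}
\item a double ray, where constant consecutive ratios and two-sided boundedness force $x\equiv1$;
\item a finite tree with a degree-two ray attached, where \cref{lem:ray-rigidity} makes every bounded fixed point constant on the tail.
\end{itemize}
In the second case, $\sup_e x_e/y_e$ and $\sup_e w_T^\star(e)$ are attained, and the exact maximum-principle arguments of \cref{thm:tree-fixed-point} apply verbatim. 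With this added, your Steps 1--2 complete the theorem.
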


By \cref{thm:infinite-fixed-exhaustion-intro}, the branch-vertex bound
for finite trees extends to locally finite infinite trees; in particular,
every infinite tree of finite distortion has only finitely many branch
vertices. We then obtain the following classification. The notions of ray, double ray, and end are recalled in \cref{subsec:infinite-ends}.

\begin{theorem}[Infinite-tree classification]
\label{thm:ends-intro}
Let $T$ be a locally finite infinite tree. Then
\[
\DN_{\LLY}(T)<\infty
\]
if and only if exactly one of the following holds:
\begin{enumerate}[label=\textup{(\roman*)}]
\item $T$ is a double ray;
\item $T$ is obtained from a finite tree by attaching one degree-two ray.
\end{enumerate}
Equivalently, case \textup{(ii)} consists precisely of the one-ended trees with finitely many branch vertices. Moreover, whenever $D:=\DN_{\LLY}(T)<\infty$,
\begin{equation}
\label{eq:infinite-branch-bound-intro}
|B(T)|\le\lceil D\rceil.
\end{equation}
Thus every finite-distortion infinite tree has at most two ends, with two ends only in the double-ray case.
\end{theorem}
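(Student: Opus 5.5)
We prove the two directions separately. The branch bound \eqref{eq:infinite-branch-bound-intro} comes from the finite case by exhaustion. Assume $D:=\DN_{\LLY}(T)<\infty$, and let $T_1\subset T_2\subset\cdots$ be an exhaustion of $T$ by finite subtrees. By \cref{thm:infinite-fixed-exhaustion-intro}, $\DN_{\LLY}(T_n)\le D$ for every $n$. By \cref{thm:branch-intro}, $|B(T_n)|\le\lceil D\rceil$. Any finite set of branch vertices of $T$, together with all their neighbours, lies in $T_n$ for $n$ large, and there each of these vertices is again a branch vertex. Hence $|B(T)|\le\lceil D\rceil$. In particular $B(T)$ is finite.

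\emph{Necessity of (i) or (ii).} Since $B(T)$ is finite, we can choose a finite subtree $T_0$ containing $B(T)$, with $T_0=\{v\}$ if $B(T)=\varnothing$. Every component of $T\setminus T_0$ has no branch vertices, so it is a path or a ray attached to $T_0$ at one vertex. Because $T$ is infinite, at least one of these components is a ray. If exactly one is a ray, we are in case (ii). If $B(T)=\varnothing$, then $T$ is a ray or a double ray. A ray is case (ii), with the finite tree a single vertex or edge.

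It remains to exclude the situation with at least two rays and $B(T)\ne\varnothing$. This is the step I expect to be the main obstacle. The plan is a curvature computation on such a tree $T$, assuming a bounded admissible weight $w$. By \cref{thm:infinite-fixed-exhaustion-intro} we may take $w=w_T^\star$, which has zero curvature on every non-pendant edge. Along a degree-two ray with consecutive weights $a_1,a_2,\dots$, the zero-curvature condition at each interior vertex gives a recursion. The Lin--Lu--Yau curvature of the edge $x_ix_{i+1}$ involves $a_{i-1},a_i,a_{i+1}$, and I expect the condition to force $a_{i+1}/a_i$ to be governed by $a_i/a_{i-1}$, so that the weights along the ray are monotone. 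Tracking this recursion inward from infinity, boundedness forces the weights on each ray to tend to a limit, and the curvature identity then forces the ratios to equal exactly $1$. In other words, a bounded ray must have constant weight all the way to where it attaches.

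Now take two rays and the path in $T_0$ joining their attachment points; this path must pass through some branch vertex $b$, because $B(T)\ne\varnothing$ and $T_0$ contains $B(T)$. Concatenating the rays and the path gives a double ray through $b$. At $b$, the extra neighbours outside the double ray add transport mass that must be compensated. Zero curvature on the edges at $b$ then requires a strict jump in the weights, that is, weight ratios different from $1$. Propagating this jump outward along the rays contradicts their constancy. To make this rigorous, I would first try to derive an explicit monotone quantity along degree-two paths from the definition of $F_T$ in \cref{def:distortion-map}. That quantity should be strictly monotone unless the path is a segment of a weight-constant double ray. With it, the path between the two rays acquires a nonzero net drift that cannot be absorbed on both sides.

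\emph{Sufficiency.} For a double ray, the constant weight gives every edge curvature $0$, as in $C_n$ with $n\ge6$, so $\DN_{\LLY}=1$. For case (ii), the plan is a construction. Apply \cref{thm:tree-fixed-point} to the finite tree $T_0$ extended by a long path, and take the canonical weight. The monotonicity analysis above shows that the canonical weights on the tail stabilise to a constant value. Then \cref{thm:infinite-fixed-exhaustion-intro} produces a bounded fixed point on $T$, hence finite distortion. Finally, the statement about ends is immediate: case (i) has two ends and case (ii) has one.
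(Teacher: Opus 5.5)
Your branch-vertex bound, the reduction of $T$ to a finite subtree $T_0\supseteq B(T)$ with finitely many hanging paths and rays, and your ray-rigidity step (zero curvature gives $a_j^2=a_{j-1}a_{j+1}$, so bounded weights are constant along a degree-two tail up to the attaching edge, which is \cref{lem:ray-rigidity}) all match the paper.

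\textbf{Gap in necessity.} The genuine gap is the step you flag as the main obstacle, and the mechanism you propose for it is wrong. A jump at a branch vertex does not propagate along the rays, and it does not contradict their constancy. For example, take $K_{1,k}$ with $k\ge2$ and attach a degree-two ray at the centre $c$. The canonical weight is $1$ on the leaves and $k$ on every ray edge, so the weight jumps at $c$ while the ray stays constant. The jump is absorbed by the load equation at the last branch vertex $r$: with $a_0=a_1$ it reads $a_0^2=A\,a_0$. Hence $A=a_0$, and the edge into the ray is a tie, $d^w_r=d^w_{v_1}=2a_0$.

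The missing idea is to use these ties at \emph{two} ends, together with the orientation by weighted degree.
\begin{itemize}
\item If the last branch vertices $r_1,r_2$ of the two rays coincide, that vertex carries two ties. Each has weight half its weighted degree, which leaves nothing for a third edge.
\item If $r_1\ne r_2$, every other edge at $r_i$ has weight $<d^w_{r_i}/2$. By \eqref{eq:harmonic-edge}, the neighbour of $r_i$ on the $r_1$--$r_2$ path then has strictly smaller weighted degree. So the weighted degrees along this path drop at both ends. A maximal block where they are minimal then yields a vertex with two strict outgoing edges, or a tie plus a strict outgoing edge. This contradicts \cref{lem:one-outgoing}.
\end{itemize}
This path may pass through several branch vertices carrying side trees. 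A monotone quantity along degree-two paths alone therefore cannot do the job; you need the one-outgoing-edge property, which holds at every vertex.

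\textbf{Gap in sufficiency.} Your sufficiency argument for case (ii) is also incomplete. Passing to a limit of canonical weights on $T_0$ plus a path with $n$ edges requires $\sup_n\DN_{\LLY}(T_n)<\infty$, which is exactly what you are trying to prove. On each truncation the tail weights form a geometric progression $a_0\rho_n^{\,j}$ ending in the pendant value $1$. So the first tail weight is $\rho_n^{-(n-1)}$ with $\rho_n\le1$, and nothing in your analysis shows that $\rho_n\to1$ fast enough for this to stay bounded. Your claim that the tail weights ``stabilise'' presupposes that bound.

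The paper instead constructs an admissible weight directly:
\begin{itemize}
\item put weight $1$ on the ray;
\item give the edges of $K$ at $v_0$ total weight $<1$;
\item fill in the rest of $K$ by \cref{lem:rooted-tree-extension}.
\end{itemize}
Then $v_0v_1$ is positively curved, later ray edges have zero curvature, and only finitely many weights differ from $1$.

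\textbf{Citation.} Cite \cref{lem:infinite-exhaustion} and \cref{lem:infinite-fixed-criterion} rather than \cref{thm:infinite-fixed-exhaustion-intro}. The uniqueness part of that theorem is proved using the present classification. The parts you actually need, monotone exhaustion and existence of a bounded zero-curvature fixed point, do not depend on it.
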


Finally, we return to the tree-like skeleton $\mathcal S(G)$. Its relevance to Lin--Lu--Yau curvature comes from the tree-like local formula on every edge of $\Etl(G)$. Since $\mathcal S(G)$ may be disconnected, distortion numbers are applied only to its nontrivial connected components.

\begin{theorem}[Tree-like subgraph comparison]
\label{thm:skeleton-intro}
Let $G$ be a connected locally finite graph. Every connected subgraph $H\subseteq G$ with $E(H)\subseteq\Etl(G)$ satisfies
\[
\DN_{\LLY}(H)\le\DN_{\LLY}(G).
\]
\end{theorem}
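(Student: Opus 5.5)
The plan is a restriction argument. Take an admissible weight on $G$, restrict it to $H$, and show that the restriction is still admissible on $H$ and has no larger distortion. Taking the infimum then gives the inequality.

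The trivial cases come first. If $H=K_1$, then $\DN_{\LLY}(H)=1$ by convention, and $\DN_{\LLY}(G)\ge1$ always holds. If $\DN_{\LLY}(G)=\infty$ there is nothing to prove. So assume $E(H)\ne\varnothing$ and fix an admissible $w$ on $G$ with $\Dist(w)<\infty$. Let $w|_H$ be its restriction to $E(H)$. Since $\sup_{E(H)}w\le\sup_E w$ and $\inf_{E(H)}w\ge\inf_E w$, we get $\Dist(w|_H)\le\Dist(w)$. It therefore suffices to prove $\kappa^{w|_H}_{\LLY}(e)\ge0$ for every $e\in E(H)$.

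For this step I would use the tree-like local formula for $\kappa^w_{\LLY}$ on edges of $\Etl$. Let $e=xy\in E(H)$, so $e$ lies on no $3$-, $4$- or $5$-cycle of $G$. Take any $x'\sim x$ with $x'\ne y$ and any $y'\sim y$ with $y'\ne x$. Then $x'\ne y'$, since otherwise $e$ lies on a triangle. Also $d(x',y')\ge3$, since otherwise $x'y'$ or $x'zy'$ would close a $4$- or $5$-cycle through $e$. Hence the optimal transport is forced: all mass at $x'$ other than what can be moved onto $x,y$ travels distance $3$. The curvature is then an explicit function of three quantities: $w_{xy}$, $W_x:=\sum_{z\sim x}w_{xz}$ and $W_y$. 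In the unweighted tree case this should read $\tfrac{2}{d_x}+\tfrac{2}{d_y}-2$; in general I expect something like $2w_{xy}\bigl(W_x^{-1}+W_y^{-1}\bigr)-2$, possibly truncated by a $\min$ with $1$ in the regime where the idleness limit matters.

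The same formula applies to $e$ inside $H$ with the weight $w|_H$, because:
\begin{itemize}
\item every cycle of $H$ is a cycle of $G$, so $e$ is tree-like in $H$;
\item the paper's local formula depends only on this cycle condition and on degrees, so it holds in $H$ as it does in $G$.
\end{itemize}
The only change is that $W_x,W_y$ are replaced by the $H$-sums $W_x^H\le W_x$ and $W_y^H\le W_y$, while $w_{xy}$ stays the same. The key claim is that the formula is nonincreasing in $W_x$ and in $W_y$ for fixed $w_{xy}$. Granting this, $\kappa^{w|_H}_{\LLY}(e)\ge\kappa^w_{\LLY}(e)\ge0$.

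The main obstacle is checking this monotonicity against the exact form of the paper's local formula. In particular, I need it to hold uniformly across any case split, such as degree-one endpoints or branches of a $\min$ or $\max$. I also need the formula to be genuinely valid in $H$ when an endpoint has $H$-degree $1$. If the formula is piecewise, I would verify monotonicity on each piece and continuity across the breakpoints.

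Once that is done, every admissible finite-distortion $w$ on $G$ gives an admissible $w|_H$ on $H$ with $\Dist(w|_H)\le\Dist(w)$. Taking the infimum over $w$ yields $\DN_{\LLY}(H)\le\DN_{\LLY}(G)$.
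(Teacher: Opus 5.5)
Your proposal is correct and is essentially the paper's proof. You restrict a finite-distortion admissible weight to $H$, note that each edge of $H$ remains tree-like in $H$, and compare curvatures via the tree-like edge formula. Your worry about case splits is unnecessary: \cref{lem:tree-like-formula} gives exactly $\kappa_{\LLY}^w(x,y)=2w_{xy}\bigl(1/d_x^w+1/d_y^w\bigr)-2$ with no truncation, valid also for leaf endpoints (cf.\ \eqref{eq:leaf-curvature}), and this is manifestly nonincreasing in $d_x^w$ and $d_y^w$.
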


A complementary local estimate holds on every tree-like edge $xy$:
\[
\sqrt{(d_x-1)(d_y-1)}\le\DN_{\LLY}(G).
\]
Together with the tree theory, \cref{thm:necessary,thm:skeleton-intro}
control the distortion, branch vertices, ends, and local degrees of every
nontrivial component of $\mathcal S(G)$; see
\cref{cor:finite-distortion-skeleton-structure}.

\bigskip

\paragraph{Relation to prescribed curvature flow.} For finite graphs of girth at least $6$, Lin and Liu \cite[Proposition~3.2 and Theorems~3.2--3.3]{LinLiu2026} proved projective uniqueness for a prescribed attainable curvature and exponential convergence of the corresponding curvature flow. Hence, if $w$ is an optimal weight, then the nonnegative curvature vector
\[
\bigl(\kappa_{\LLY}^w(e)\bigr)_{e\in E}
\]
is realized uniquely up to a global positive rescaling of $w$, and
every realizing weight has distortion $\DN_{\LLY}(G)$. For a finite tree, the canonical weight satisfies \[ \kappa_{\LLY}^{w_T^\star}(e)=0 \] on every non-pendant edge.

\bigskip

\paragraph{Organization.}
Section~\ref{sec:prelim} collects the weighted LLY preliminaries.
Section~\ref{sec:finiteness} proves the forest obstruction and the exact
$2$-core reduction. Section~\ref{sec:finite-trees} develops the finite-tree
fixed-point theory, suppression monotonicity, and the branch-vertex bound.
Section~\ref{sec:infinite-trees} proves the exhaustion theorem and classifies
infinite trees of finite distortion. Section~\ref{sec:tree-like-skeleton}
then transfers the tree theory to tree-like subgraphs of general connected locally finite
graphs.

\section{Preliminaries}
\label{sec:prelim}

All graphs are simple and undirected. Graphs carrying $\DN_{\LLY}$ are
connected, and infinite graphs are locally finite; auxiliary subgraphs may
be disconnected and are treated componentwise. Unless stated otherwise,
graphs are finite. 

For a connected locally finite graph $G=(V,E)$, write $x\sim y$ for $xy\in E$,
$N_G(x):=\{y\in V:y\sim x\}$,
$N_G[x]:=N_G(x)\cup\{x\}$, $d_x$ for
the combinatorial degree, and $d_G(x,y)$ for graph distance. If the
ambient graph is clear, we write $d(x,y)$. The girth of a forest is
understood to be infinite.

\subsection{Weighted Lin--Lu--Yau curvature}
\label{subsec:weighted-LLY}

Let $w:E\to(0,\infty)$ be a positive edge weight. The weighted degree of a vertex $x$ is
\[
d_x^w:=\sum_{y\sim x}w_{xy}.
\]
When needed, write $d_{x,G}^w$. For $E\ne\varnothing$, set
\[
w_{\inf}:=\inf_{e\in E}w_e,
\qquad
w_{\sup}:=\sup_{e\in E}w_e,
\qquad
\Dist(w):=\frac{w_{\sup}}{w_{\inf}},
\]
with value $\infty$ if $w_{\inf}=0$ or $w_{\sup}=\infty$. On finite
graphs we use $w_{\min}$ and $w_{\max}$. Throughout, the transport
distance is the combinatorial distance; only the transition
probabilities depend on $w$.

For $\alpha\in[0,1]$, define
\begin{equation}
\label{eq:mu}
\mu_x^{\alpha,w}(z)=
\begin{cases}
\alpha, & z=x,\\[2mm]
(1-\alpha)\dfrac{w_{xz}}{d_x^w}, & z\sim x,\\[3mm]
0, & \text{otherwise}.
\end{cases}
\end{equation}
For finitely supported probability measures $\mu,\nu$ on $V$, let $\Pi(\mu,\nu)$ denote the set of couplings of $\mu$ and $\nu$. Their $1$-Wasserstein distance with respect to $d$ is
\[
W_1(\mu,\nu)
:=\inf_{\pi\in\Pi(\mu,\nu)}
\sum_{u,v\in V}d(u,v)\pi(u,v).
\]
For adjacent vertices $x\sim y$, define
\[
\kappa_\alpha^w(x,y)
:=1-W_1\bigl(\mu_x^{\alpha,w},\mu_y^{\alpha,w}\bigr)
\]
and
\begin{equation}
\label{eq:LLY}
\kappa_{\LLY}^w(x,y)
:=\lim_{\alpha\to1^-}\frac{\kappa_\alpha^w(x,y)}{1-\alpha}.
\end{equation}
See \cite{LinLuYau2011,BCLMP2018} for the underlying idleness theory.
The fixed-distance weighted convention in \eqref{eq:mu} is the normalized
weighted-Laplacian framework of M\"unch and Wojciechowski
\cite{MunchWojciechowski2019}. We call $w\equiv1$ the \emph{unit weight}; in this case we omit the superscript $w$. When the ambient graph must be displayed, we write $\kappa_{\LLY}^{w,G}$ (and similarly for $\kappa_{\alpha}^{w,G}$).

For a function $f:V\to\mathbb R$, write
\[
\operatorname{Lip}(1)
:=\{f:V\to\mathbb R:|f(u)-f(v)|\le d(u,v)\text{ for all }u,v\in V\}.
\]
We also write
\[
\Delta_w f(x)
:=\frac{1}{d_x^w}\sum_{z\sim x}w_{xz}\bigl(f(z)-f(x)\bigr)
\]
whenever $d_x^w>0$. We shall repeatedly use the following limit-free formulation of
Lin--Lu--Yau curvature, due to M\"unch and Wojciechowski
\cite[Theorem~2.1 and Corollary~2.2]{MunchWojciechowski2019}. In the
present normalized weighted setting it reads
\begin{equation}
\label{eq:dual-LLY}
\kappa_{\LLY}^w(x,y)
=
\inf\left\{
\Delta_w f(x)-\Delta_w f(y):
f\in\operatorname{Lip}(1),\ f(y)-f(x)=1
\right\}.
\end{equation}

\begin{lemma}[Scale invariance]
\label{lem:scale}
For every $c>0$,
\[
\kappa_{\LLY}^{cw}(e)=\kappa_{\LLY}^{w}(e)
\qquad\text{for every }e\in E.
\]
\end{lemma}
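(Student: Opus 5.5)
The plan is to observe that the random-walk measures $\mu_x^{\alpha,w}$ in \eqref{eq:mu} do not change when $w$ is replaced by $cw$, and then to pass this invariance through the definitions. Fix $c>0$. For every vertex $x$ we have $d_x^{cw}=\sum_{y\sim x}cw_{xy}=c\,d_x^w$. Hence, for every neighbor $z\sim x$,
\[
\frac{(cw)_{xz}}{d_x^{cw}}=\frac{c\,w_{xz}}{c\,d_x^w}=\frac{w_{xz}}{d_x^w}.
\]
The idle mass $\alpha$ at $x$ and the value $0$ off $N_G[x]$ are unaffected by the weight. Therefore $\mu_x^{\alpha,cw}=\mu_x^{\alpha,w}$ as measures on $V$, for every $\alpha\in[0,1]$ and every $x\in V$.

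Next I would use that the transport distance is the combinatorial distance $d$, which does not depend on $w$. So for each edge $xy$ and each $\alpha$, the Wasserstein distance $W_1(\mu_x^{\alpha,cw},\mu_y^{\alpha,cw})$ is computed with the same measures and the same cost as $W_1(\mu_x^{\alpha,w},\mu_y^{\alpha,w})$. Consequently $\kappa_\alpha^{cw}(x,y)=\kappa_\alpha^w(x,y)$ for all $\alpha\in[0,1]$. Dividing by $1-\alpha$ and letting $\alpha\to1^-$ in \eqref{eq:LLY} gives $\kappa_{\LLY}^{cw}(x,y)=\kappa_{\LLY}^w(x,y)$. The limit exists for $w$, and the two functions of $\alpha$ are identical, so it exists for $cw$ as well.

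As a cross-check, or as an alternative route, I would use the dual formula \eqref{eq:dual-LLY}. The same cancellation gives $\Delta_{cw}f=\Delta_w f$ for every $f$. The constraint set $\{f\in\operatorname{Lip}(1): f(y)-f(x)=1\}$ does not depend on $w$. So the two infima are taken over the same set of identical quantities.

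There is no real obstacle here. The only point to check is that local finiteness makes each $d_x^w$ a finite positive sum, so the ratios are well defined and the cancellation is legitimate.
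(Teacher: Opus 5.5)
Your proof is correct and matches the paper's: both observe that the ratios $w_{xz}/d_x^w$ are unchanged under $w\mapsto cw$. Hence the measures $\mu_x^{\alpha,w}$ are unchanged, and with the fixed combinatorial transport distance, so are $W_1$, $\kappa_\alpha^w$, and $\kappa_{\LLY}^w$. Your dual-formula cross-check via $\Delta_{cw}f=\Delta_w f$ is also valid, though not needed.
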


\begin{proof}
The ratios $w_{xy}/d_x^w$ in \eqref{eq:mu} are unchanged by the global rescaling $w\mapsto cw$. Hence all measures in \eqref{eq:mu}, and therefore all curvatures, are unchanged.
\end{proof}

By \cite[Lemma~2.2]{LinLiu2026}, we have the following regularity
property.
\begin{lemma}[Continuity]\label{lem:continuity}
For a fixed finite graph $G$, the map
\[
(0,\infty)^E\longrightarrow\mathbb R^E,
\qquad
w\longmapsto\bigl(\kappa_{\LLY}^w(e)\bigr)_{e\in E},
\]
is locally Lipschitz, and hence continuous.
\end{lemma}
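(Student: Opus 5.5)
The plan is to reduce the statement to Lipschitz continuity of the Wasserstein distance in the underlying measures. The curvature is then written in a form that avoids the limit $\alpha\to1^-$. Fix a finite graph $G$ and a compact box $Q=[a,b]^E\subset(0,\infty)^E$ with $0<a<b$; it suffices to prove a Lipschitz bound on each such box. Since $G$ is finite, every edge $xy$ involves only finitely many vertices, namely the support $N[x]\cup N[y]$. By the limit-free formula \eqref{eq:dual-LLY}, we may also restrict $f$ to this ball, normalized by $f(x)=0$. For these restricted functions, the constraints $f\in\operatorname{Lip}(1)$ and $f(y)-f(x)=1$ cut out a compact convex polytope $P_{xy}$ in $\mathbb R^{N[x]\cup N[y]}$. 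This polytope does not depend on $w$. Here I use that the distances entering the constraint are combinatorial distances in $G$ restricted to the ball, and that a $1$-Lipschitz function on the ball extends to all of $V$ by McShane extension, so restricting does not change the infimum.

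The key step is then the observation that for fixed $f$, the map
\[
w\longmapsto \Delta_wf(x)-\Delta_wf(y)=\sum_{z\sim x}\frac{w_{xz}}{d_x^w}(f(z)-f(x))-\sum_{z\sim y}\frac{w_{yz}}{d_y^w}(f(z)-f(y))
\]
is a rational function of $w$ with denominators $d_x^w,d_y^w\ge a$. Each coefficient $w_{xz}/d_x^w$ has partial derivatives bounded on $Q$ by a constant $C$, for instance $|\partial_{w_e}(w_{xz}/d^w_x)|\le 2/d_x^w\le 2/a$. On $P_{xy}$ we also have $|f(z)-f(x)|\le d(z,x)\le 1$ for neighbors $z$ of $x$, and similarly for $y$. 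Hence each such function is Lipschitz on $Q$ with a constant $L$ that is uniform over $f\in P_{xy}$; something like $L\le 2(d_x+d_y)\cdot 2/a$ up to the choice of norm.

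The curvature $\kappa^w_{\LLY}(x,y)$ is the infimum over $f\in P_{xy}$ of this family. An infimum of a family of $L$-Lipschitz functions, finite at some point, is again $L$-Lipschitz, since $g_f(w)\le g_f(w')+L|w-w'|$ for every $f$. Taking the maximum over the finitely many edges gives the local Lipschitz property of the vector map, and continuity follows.

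The only step needing care, and the main potential obstacle, is justifying the reduction of \eqref{eq:dual-LLY} to a $w$-independent compact index set. Without that reduction the uniform Lipschitz constant is unclear. The reduction rests on the fact that $\Delta_wf(x)$ and $\Delta_wf(y)$ depend only on the values of $f$ on $N[x]\cup N[y]$, together with the McShane extension of $1$-Lipschitz functions from that set to $V$. Alternatively, one could simply cite \cite[Lemma~2.2]{LinLiu2026} as the paper does.
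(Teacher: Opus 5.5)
Your argument is correct, but it follows a different route from the paper. The paper gives no argument for this lemma and simply imports it from \cite[Lemma~2.2]{LinLiu2026}.

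You instead derive it from the M\"unch--Wojciechowski duality \eqref{eq:dual-LLY}, which the paper already quotes. You write $\kappa_{\LLY}^w(x,y)$ as an infimum of $g_f(w)=\Delta_wf(x)-\Delta_wf(y)$. Each $g_f$ is Lipschitz on every box $[a,b]^E$ with a constant independent of $f$, since only the differences $f(z)-f(x)$ ($z\sim x$) and $f(z)-f(y)$ ($z\sim y$) enter, and these lie in $[-1,1]$. Indeed $\partial_{w_{xz_0}}\Delta_wf(x)=\bigl(f(z_0)-f(x)-\Delta_wf(x)\bigr)/d_x^w$, which has absolute value at most $2/a$.

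Your route makes the lemma self-contained modulo a formula the paper uses anyway, and gives an explicit constant of order $(d_x+d_y)/a$ in the sup norm. Because the bound for each edge involves only the weights at $x$ and $y$, it also applies edge by edge on locally finite graphs. The citation is shorter, and it does not depend on the duality formula.

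One simplification: the step you flag as the main obstacle is not needed. The uniform Lipschitz bound on $g_f$ holds for every $f\in\operatorname{Lip}(1)$ directly. The family is also uniformly bounded, with $|g_f|\le 2$. An infimum of such a family is Lipschitz whether or not the index set is compact or reduced to the ball $N[x]\cup N[y]$. In any case, for finite $G$ the set $\{f\in\operatorname{Lip}(1): f(x)=0,\ f(y)=1\}$ is already a $w$-independent compact polytope in $\mathbb R^V$. So the McShane extension, while correct, is superfluous.
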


M\"unch and Wojciechowski compute the curvature of an edge contained
in no $3$-, $4$-, or $5$-cycle in
\cite[Example~2.3]{MunchWojciechowski2019}. Specializing their formula to
the normalized weighted Laplacian used here gives the following local
identity.

\begin{lemma}[Tree-like edge formula]
\label{lem:tree-like-formula}
Let $xy\in E(G)$ be contained in no cycle of length $3$, $4$, or $5$. Then
\begin{equation}
\label{eq:tree-like-curvature}
\kappa_{\LLY}^w(x,y)
=2w_{xy}\left(\frac{1}{d_x^w}+\frac{1}{d_y^w}\right)-2.
\end{equation}
Consequently,
\begin{equation}
\label{eq:local-nonnegative}
\kappa_{\LLY}^w(x,y)\ge0
\quad\Longleftrightarrow\quad
w_{xy}^2\ge
\bigl(d_x^w-w_{xy}\bigr)
\bigl(d_y^w-w_{xy}\bigr).
\end{equation}
Equality in \eqref{eq:local-nonnegative} is equivalent to $\kappa_{\LLY}^w(x,y)=0$.
\end{lemma}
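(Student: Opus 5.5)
The proof has two parts: derive the explicit formula from the Münch--Wojciechowski limit-free formulation \eqref{eq:dual-LLY} on an edge $xy$ lying in no short cycle, then obtain \eqref{eq:local-nonnegative} by elementary algebra. Write $p_{xz}:=w_{xz}/d_x^w$, so $\Delta_w f(x)=\sum_{z\sim x}p_{xz}(f(z)-f(x))$. Normalize $f(x)=0$, $f(y)=1$. The hypothesis fixes the local geometry as follows. Let $z\in N(x)\setminus\{y\}$ and $z'\in N(y)\setminus\{x\}$.
\begin{itemize}
\item $z\ne z'$ and $z\not\sim y$, since otherwise there is a triangle.
\item $z\not\sim z'$, since otherwise there is a $4$-cycle.
\item $d(z,z')=3$, since a common neighbour of $z,z'$ would give a $5$-cycle.
\item $d(z,y)=2$ and $d(x,z')=2$.
\end{itemize}
Consequently the vertices in $N[x]\cup N[y]$ sit in a tree-like configuration.

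For the upper bound I will exhibit a test function. Take $f(z)=-1$ for $z\in N(x)\setminus\{y\}$ and $f(z')=2$ for $z'\in N(y)\setminus\{x\}$, and extend the function from $N[x]\cup N[y]$ to all of $V$ by a McShane extension. This is $1$-Lipschitz on the configuration, because all the required distances were verified above; in particular $|f(z)-f(z')|=3=d(z,z')$. With this choice,
\[\Delta_w f(x)=p_{xy}-(1-p_{xy})=2p_{xy}-1,\qquad \Delta_w f(y)=-p_{yx}+(1-p_{yx})=1-2p_{yx}.\]
Hence $\Delta_w f(x)-\Delta_w f(y)=2p_{xy}+2p_{yx}-2$, which is the right-hand side of \eqref{eq:tree-like-curvature}.

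For the lower bound, take any admissible $f$. The Lipschitz condition forces $f(z)\ge -1$ for $z\sim x$, because $|f(z)-f(x)|\le 1$. Likewise $f(z')\le 2$ for $z'\sim y$. Therefore $\Delta_w f(x)\ge 2p_{xy}-1$ and $\Delta_w f(y)\le 1-2p_{yx}$, so the infimum is attained by the function above. This step uses only the Lipschitz constraint and not the cycle condition. The cycle condition is needed only to make the extremal function admissible, and that admissibility check is the one delicate point.

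It remains to convert the formula into \eqref{eq:local-nonnegative}. Write $a=d_x^w-w_{xy}$, $b=d_y^w-w_{xy}$ and $w=w_{xy}$. Then $\kappa\ge0$ is equivalent to
\[w\Bigl(\tfrac1{w+a}+\tfrac1{w+b}\Bigr)\ge1.\]
Multiply through by $(w+a)(w+b)>0$. The left side becomes $w(2w+a+b)$ and the right side becomes $w^2+w(a+b)+ab$, so the inequality reduces to $w^2\ge ab$. Every step is an equivalence, so equality in the reduced inequality corresponds exactly to equality in the original, that is, to $\kappa=0$.

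The only degenerate case is when $x$ or $y$ is a leaf. Then the corresponding neighbour set is empty, and the same computation goes through with $a=0$ or $b=0$.
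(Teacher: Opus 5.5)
Your proof is correct, but it is not the route the paper takes. The paper gives no derivation of \eqref{eq:tree-like-curvature}: it imports the formula from M\"unch--Wojciechowski's Example~2.3, specialized to the normalization in which the vertex measure is the weighted degree $d^w$, and then reads off \eqref{eq:local-nonnegative}.

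You instead derive the formula from the limit-free identity \eqref{eq:dual-LLY}, which the paper already records, by a two-sided estimate.
\begin{itemize}
\item Your lower bound $\Delta_wf(x)-\Delta_wf(y)\ge 2p_{xy}+2p_{yx}-2$ uses only $f(z)\ge-1$ on $N(x)$ and $f(z')\le 2$ on $N(y)$. It therefore holds on every edge of every graph.
\item The cycle hypothesis enters only to certify that the test function with values $-1,0,1,2$ is $1$-Lipschitz on the finite set $N[x]\cup N[y]$. Excluding triangles gives well-definedness and $d(y,z)=d(x,z')=2$. Excluding $4$-cycles gives $z\not\sim z'$. Excluding $5$-cycles rules out a common neighbour $u$ of $z$ and $z'$; here you should note that $u\notin\{x,y\}$, which again follows from the triangle exclusion.
\item The McShane extension then gives a global competitor.
\end{itemize}

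Your approach is self-contained relative to the preliminaries. It also explains transparently why exactly the lengths $3,4,5$ are excluded, and it shows as a by-product that the right-hand side of \eqref{eq:tree-like-curvature} is a universal lower bound for $\kappa_{\LLY}^w(x,y)$. The citation route is shorter, but it leaves both the mechanism and the translation between normalizations implicit. Your algebraic reduction to $w^2\ge ab$, including the leaf cases $a=0$ or $b=0$, is fine.
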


For a tree, \cref{lem:tree-like-formula} applies to every edge. In particular, if $x$ is a leaf and $xy$ is its unique incident edge, then
\begin{equation}
\label{eq:leaf-curvature}
\kappa_{\LLY}^w(x,y)=\frac{2w_{xy}}{d_y^w}>0.
\end{equation}

\subsection{Curvature-distortion numbers}

We retain the notation and terminology introduced in
\eqref{eq:DN-intro}. Thus $\DN_{\LLY}(G)$ is defined only for connected
locally finite graphs, with the convention
\[
\DN_{\LLY}(K_1)=1.
\]
No distortion number is assigned to a disconnected graph in this
paper.

\begin{definition}
\label{def:general-DN}
Let $G$ be connected and locally finite, and let $\Curv$ be a weighted discrete curvature indexed by a local index set $\mathcal I(G)$. Assume that its sign is invariant under global rescaling of positive edge weights. Define the \emph{$\Curv$-distortion number} by
\[
\DN_{\Curv}(G)
:=\inf\left\{
\Dist(w):
 w:E\to(0,\infty),\quad
 \Curv^w(\xi)\ge0\ \text{for every }\xi\in\mathcal I(G)
\right\},
\]
with value $+\infty$ if no positive weight satisfying these curvature inequalities has finite multiplicative distortion. If $E=\varnothing$, connectedness forces $G=K_1$, and we set $\DN_{\Curv}(K_1)=1$ by convention.
\end{definition}

In the remainder of the paper, $\mathcal I(G)=E$ and $\Curv=\LLY$. By \cref{lem:scale}, every finite-distortion admissible weight can be normalized by $w_{\inf}=1$ without changing its curvature or distortion. On a finite graph this is the usual normalization $w_{\min}=1$.

\begin{proposition}[Attainment on finite graphs]
\label{prop:finite-attainment}
Let $G$ be a finite connected graph. If
\[
\DN_{\LLY}(G)<\infty,
\]
then $G$ admits an optimal weight.
\end{proposition}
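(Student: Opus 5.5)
The plan is a compactness argument on the normalized admissible weights, using \cref{lem:scale} and \cref{lem:continuity}. Put $D:=\DN_{\LLY}(G)<\infty$. First I dispose of $G=K_1$, where $E=\varnothing$ and the statement holds by convention. For $E\ne\varnothing$, choose admissible weights $w^{(n)}$ with $\Dist(w^{(n)})\to D$. Because $E$ is finite, every positive weight has finite distortion and attains its minimum. By \cref{lem:scale} I may therefore normalize so that $w^{(n)}_{\min}=1$ without changing curvature or distortion. Then
\[
w^{(n)}\in[1,\Dist(w^{(n)})]^E\subseteq[1,D+1]^E
\]
for all large $n$, which is a compact subset of $(0,\infty)^E$.

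Next I pass to a subsequence with $w^{(n)}\to w$ in $[1,D+1]^E$. The limit $w$ is a positive weight because all coordinates are at least $1$. By \cref{lem:continuity}, the curvature map is continuous on $(0,\infty)^E$, and $w$ lies in that open set. Hence
\[
\kappa^{w}_{\LLY}(e)=\lim_n\kappa^{w^{(n)}}_{\LLY}(e)\ge0
\]
for every $e\in E$, so $w$ is admissible.

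Finally I check the distortion. The maps $w\mapsto w_{\max}$ and $w\mapsto w_{\min}$ are continuous on $(0,\infty)^E$ for finite $E$, and the minima stay bounded below by $1$. So the ratio is continuous and
\[
\Dist(w)=\lim_n\Dist(w^{(n)})=D.
\]
Thus $w$ is optimal. The only delicate point is that the limit must not leave $(0,\infty)^E$, where continuity could fail; the normalization $w_{\min}=1$ together with the uniform upper bound $D+1$ guarantees this. Beyond that there is no real obstacle.
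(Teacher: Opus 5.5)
Your argument is correct and is essentially the paper's proof: both normalize via \cref{lem:scale}, confine the normalized admissible weights to a box $[1,M]^E$ inside $(0,\infty)^E$, and use \cref{lem:continuity} to show that admissibility survives taking limits. The only difference is presentational: you extract a convergent subsequence from a minimizing sequence, whereas the paper minimizes the continuous function $w\mapsto w_{\max}$ over the compact set $\mathcal W_M(G)$ built from one fixed admissible weight.
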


\begin{proof}
The case $G=K_1$ is immediate. Assume $E(G)\ne\varnothing$.
Choose a normalized admissible weight $w_0$ and set
\[
M:=\Dist(w_0).
\]
Consider
\[
\mathcal W_M(G)
:=
\left\{
w\in[1,M]^E:
\min_{e\in E}w_e=1,\ 
\kappa_{\LLY}^w(e)\ge0\ \text{for every }e\in E
\right\}.
\]
This set is nonempty, since $w_0\in\mathcal W_M(G)$, and it is compact
by \cref{lem:continuity}. Hence the continuous function
\[
w\longmapsto\Dist(w)=\max_{e\in E}w_e
\]
attains its minimum on $\mathcal W_M(G)$.

This minimum equals $\DN_{\LLY}(G)$. Indeed, if the minimum were
strictly larger,
the definition of $\DN_{\LLY}(G)$ would give a normalized admissible
weight with smaller distortion; since this distortion is then less than
$M$, that weight would also belong to $\mathcal W_M(G)$, a contradiction.
Thus a minimizer in $\mathcal W_M(G)$ is an optimal weight.
\end{proof}

For a finite connected graph $G$ with at least one edge,
\[
\DN_{\LLY}(G)=1
\]
if and only if $\kappa_{\LLY}(G)\geq 0$. Indeed, if the unit weight is admissible, then $\DN_{\LLY}(G)=1$. Conversely, it is clear from the definition.

\subsection{Trees and the LLY-distortion map}
\label{subsec:tree-distortion-map} 

Let $T$ be a locally finite tree. Put \[ \mathcal X_T:=[1,\infty)^{E(T)}. \] For $x=(x_e)_{e\in E(T)}\in\mathcal X_T$ and an edge $e=u\sim v$, define \begin{equation} \label{eq:Aue} A_{u\mid v}(x) := \sum_{\substack{f\in E(T)\\ f\ni u,\ f\ne e}}x_f. \end{equation} The sum is finite because $T$ is locally finite. 

\begin{definition}[LLY-distortion map] \label{def:distortion-map} For a locally finite tree $T$, define \[ F_T:\mathcal X_T\longrightarrow\mathcal X_T \] coordinatewise by \begin{equation} \label{eq:distortion-map} (F_T(x))_{uv} := \max\left\{ 1,\sqrt{A_{u\mid v}(x)A_{v\mid u}(x)} \right\}, \qquad uv\in E(T). \end{equation} A vector $x\in\mathcal X_T$ is called a \emph{fixed point} of $F_T$ if 
\begin{equation}\label{eq:distortion-system}
    F_T(x)=x,
\end{equation}
and \emph{feasible} if $F_T(x)\le x$ coordinatewise. It is called \emph{bounded} if \[ \sup_{e\in E(T)}x_e<\infty, \] and \emph{normalized} if \[ \inf_{e\in E(T)}x_e=1. \] 
A fixed point $x$ of $F_T$ is called an \emph{optimal fixed point} if, when regarded as an edge weight on $T$, it is an optimal weight.
\end{definition} 

When $T$ is finite, every element of $\mathcal X_T$ is bounded, and normalization is equivalent to \[ \min_{e\in E(T)}x_e=1. \]

By \eqref{eq:local-nonnegative}, for every edge $u\sim v$ of a tree, \[ \kappa_{\LLY}^w(u,v)\ge0 \quad\Longleftrightarrow\quad w_{uv}^2\ge A_{u\mid v}(w)A_{v\mid u}(w), \] with equality if and only if $\kappa_{\LLY}^w(u,v)=0$. Thus, when $x\in\mathcal X_T$ is regarded as an edge weight, $x$ is admissible if and only if it is feasible. Hence 
\begin{equation}\label{eq:supersolution}
    x\ \text{is a normalized admissible weight}\quad\Longleftrightarrow\quad\ x\ \text{is a feasible vector of \cref{def:distortion-map}}.
\end{equation}
Notice that if $e$
is pendant, then one of the two complementary loads in
\eqref{eq:distortion-map} is zero. Hence
\[
(F_T(x))_e=1.
\]
In particular, every fixed point of $F_T$ assigns weight $1$ to every
pendant edge.

We shall also use the following form of the zero-curvature equation. Let $w$ be a positive weight on a locally finite tree. If $u\sim v$ is a non-pendant edge with $\kappa_{\LLY}^w(u,v)=0$, then \[ w_{uv}^2=(d_u^w-w_{uv})(d_v^w-w_{uv}), \] and hence \begin{equation} \label{eq:harmonic-edge} w_{uv}=\frac{d_u^wd_v^w}{d_u^w+d_v^w}. \end{equation}

For a positive weight under which every non-pendant edge has zero curvature, orient a non-pendant edge $u\sim v$ from $u$ toward $v$ when $d_u^w<d_v^w$. If the two weighted degrees are equal, leave the edge unoriented and call it a \emph{tie edge}. 

\begin{lemma}[One-outgoing-edge property] \label{lem:one-outgoing} Let $T$ be a locally finite tree with a positive edge weight $w$ such that every non-pendant edge has zero curvature. If a non-pendant edge $u\sim v$ is oriented from $u$ toward $v$, then every other non-pendant edge incident with $u$ is oriented toward $u$. In particular, a vertex cannot be incident with two strict outgoing non-pendant edges, and a strict outgoing edge cannot coexist with a tie edge at the same vertex. 
\end{lemma}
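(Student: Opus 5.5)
Suppose $u\sim v$ is non-pendant and oriented $u\to v$, so $d_u^w<d_v^w$, and let $u\sim z$ be another non-pendant edge at $u$. The plan is to show that $d_z^w<d_u^w$, which means this edge is oriented toward $u$. The tool is the harmonic-mean identity \eqref{eq:harmonic-edge}, applied to both edges.

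First, apply \eqref{eq:harmonic-edge} to the edge $uv$:
\[
w_{uv}=\frac{d_u^wd_v^w}{d_u^w+d_v^w}.
\]
Since $d_v^w>d_u^w$, the function $t\mapsto d_u^w t/(d_u^w+t)$ is strictly increasing in $t$, so $w_{uv}>d_u^w/2$. Similarly, \eqref{eq:harmonic-edge} for $uz$ gives
\[
w_{uz}=\frac{d_u^wd_z^w}{d_u^w+d_z^w}.
\]
Both $uv$ and $uz$ are incident with $u$, so $w_{uv}+w_{uz}\le d_u^w$. Hence
\[
w_{uz}\le d_u^w-w_{uv}<d_u^w/2.
\]
But $w_{uz}<d_u^w/2$ is equivalent to $d_z^w<d_u^w$, again by monotonicity of the harmonic mean. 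So $uz$ is oriented from $z$ toward $u$.

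The "in particular" statements follow at once. If $u$ had two strict outgoing non-pendant edges $u\to v$ and $u\to z$, the argument above would force $uz$ to point into $u$, a contradiction. A tie edge $uz$ would give $w_{uz}=d_u^w/2$, contradicting the strict inequality $w_{uz}<d_u^w/2$.

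The only point needing care is that \eqref{eq:harmonic-edge} is used only on non-pendant edges, which is what the hypotheses provide; pendant edges at $u$ only enlarge $d_u^w$, and this is already absorbed in the inequality $w_{uv}+w_{uz}\le d_u^w$. There is no real obstacle here; the key step is the observation that $\kappa=0$ forces each non-pendant edge weight to be exactly the half-harmonic mean $d_u^wd_v^w/(d_u^w+d_v^w)$.
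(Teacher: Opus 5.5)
Your proof is correct and matches the paper's argument: the zero-curvature identity \eqref{eq:harmonic-edge} on $uv$ gives $w_{uv}>d_u^w/2$, so every other incident edge satisfies $w_{uz}<d_u^w/2$, and a second application of \eqref{eq:harmonic-edge} gives $d_z^w<d_u^w$. The two ``in particular'' consequences then follow exactly as you state.
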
 

\begin{proof} If $d_u^w<d_v^w$, then \eqref{eq:harmonic-edge} gives $w_{uv}/d_u^w>1/2$. Hence every other incident edge $uz$ has $w_{uz}<d_u^w/2$, and another use of \eqref{eq:harmonic-edge} gives $d_z^w<d_u^w$. Thus every other non-pendant edge is oriented toward $u$. \end{proof}

Along two consecutive edges oriented in the same direction, the edge
weights strictly increase in that direction. Indeed, this follows from
\eqref{eq:harmonic-edge} because the function $ab/(a+b)$ is strictly
increasing in each variable. We use these observations in the proof of \cref{lem:one-suppression,lem:strong-rooted-load,lem:terminal-configuration,thm:ends-intro}.

\begin{lemma}[Monotone iteration]
\label{lem:monotone-iteration}
Let $T$ be a locally finite tree and let
$y\in[1,\infty)^{E(T)}$ be bounded and feasible. Define
\[
x^{(0)}:=y,
\qquad
x^{(n+1)}:=F_T(x^{(n)}).
\]
Then $(x^{(n)})_{n\ge0}$ decreases coordinatewise to a bounded fixed
point $x^\star$ satisfying
\[
x^\star\le y.
\]
If
\[
\inf_{e\in E(T)}y_e=1,
\]
then
\[
\inf_{e\in E(T)}x_e^\star=1.
\]
\end{lemma}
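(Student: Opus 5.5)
The argument rests on the monotonicity of $F_T$: if $x\le x'$ coordinatewise then $F_T(x)\le F_T(x')$. This holds because each load $A_{u\mid v}$ is a finite sum of coordinates with nonnegative coefficients, and both $\sqrt{ab}$ and $\max\{1,\cdot\}$ are nondecreasing. I will prove by induction that the iterates decrease and stay above $1$.

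For the base step, feasibility of $y$ gives $x^{(1)}=F_T(y)\le y=x^{(0)}$. For the inductive step, assume $x^{(n+1)}\le x^{(n)}$. Monotonicity then gives
\[
x^{(n+2)}=F_T(x^{(n+1)})\le F_T(x^{(n)})=x^{(n+1)}.
\]
Every iterate lies in $\mathcal X_T=[1,\infty)^{E(T)}$ because of the $\max\{1,\cdot\}$ in \eqref{eq:distortion-map}. So each coordinate sequence $x^{(n)}_e$ is nonincreasing and bounded below by $1$, and it converges to some $x^\star_e\ge1$. Moreover $x^\star\le x^{(n)}\le y$ for every $n$, so $\sup_e x^\star_e\le\sup_e y_e<\infty$ and $x^\star$ is bounded.

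Next, $x^\star$ is a fixed point, which I check coordinatewise. Fix an edge $uv$. The loads $A_{u\mid v}$ and $A_{v\mid u}$ involve only the finitely many edges incident with $u$ or $v$, since $T$ is locally finite. Hence $(F_T(x))_{uv}$ is a continuous function of finitely many coordinates of $x$. Passing to the limit in $x^{(n+1)}_{uv}=(F_T(x^{(n)}))_{uv}$ gives $x^\star_{uv}=(F_T(x^\star))_{uv}$. This is the only point where local finiteness matters. Coordinatewise convergence is enough here, so no uniform control over an infinite edge set is needed, and I see no real obstacle.

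Finally, suppose $\inf_e y_e=1$. Then
\[
1\le\inf_e x^\star_e\le\inf_e y_e=1,
\]
where the lower bound holds because $x^\star\in\mathcal X_T$. Hence $\inf_e x^\star_e=1$. The step that most needs care is confirming that the monotonicity of $F_T$ is genuinely coordinatewise and order-preserving on the whole of $\mathcal X_T$. That is immediate from the formula, since no coordinate enters any load with a negative sign.
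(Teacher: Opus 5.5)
Your proposal is correct and follows the paper's proof: monotonicity of $F_T$ together with $F_T(y)\le y$ gives $\mathbf 1\le x^{(n+1)}\le x^{(n)}\le y$, and local finiteness lets you pass to the coordinatewise limit in the fixed-point equation. Your normalization step via $\mathbf 1\le x^\star\le y$ is the same as the paper's; you simply spell out the induction and the continuity of each coordinate in more detail.
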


\begin{proof}
Since $F_T$ is coordinatewise nondecreasing and $F_T(y)\le y$,
\[
\mathbf1\le x^{(n+1)}\le x^{(n)}\le y.
\]
Thus $x^{(n)}$ converges coordinatewise to a bounded vector $x^\star$.
Local finiteness makes each coordinate of $F_T$ a finite expression, so
passing to the limit gives $F_T(x^\star)=x^\star$. If $\inf_e y_e=1$,
then $\mathbf1\le x^\star\le y$ also gives $\inf_e x_e^\star=1$.
\end{proof}

\subsection{The \texorpdfstring{$2$}{2}-core and cycle rank}

\begin{definition}\label{def:2-core}
For a finite graph $G$, the \emph{$2$-core} $\core(G)$ is the unique maximal subgraph of $G$ with minimum degree at least $2$. Equivalently, it is obtained by repeatedly deleting vertices of degree $0$ or $1$ until no such vertex remains.
\end{definition}

If $H$ is a finite graph with $c(H)$ connected components, its \emph{cycle rank} is
\begin{equation}
\label{eq:cycle-rank}
\beta(H)=|E(H)|-|V(H)|+c(H).
\end{equation}
In particular, $H$ is a forest if and only if $\beta(H)=0$.

\section{Finiteness: obstructions and exact core reduction}
\label{sec:finiteness}

\subsection{The forest obstruction}

We first prove the necessary structural condition.

\begin{proof}[Proof of \cref{thm:necessary}]
Assume $\DN_{\LLY}(G)<\infty$ and choose a finite-distortion admissible positive weight $w$. Suppose that $\mathcal{S}(G)$ contains a cycle
\[
C=v_1v_2\cdots v_kv_1.
\]
Since each edge of $C$ belongs to $\Etl(G)$, one has $k\ge6$. Write
\[
a_i:=w_{v_iv_{i+1}},
\]
with indices modulo $k$.

For the edge $v_iv_{i+1}$, \cref{lem:tree-like-formula} gives
\[
a_i^2
\ge
\bigl(d_{v_i}^w-a_i\bigr)
\bigl(d_{v_{i+1}}^w-a_i\bigr).
\]
The first factor is at least $a_{i-1}$ and the second is at least $a_{i+1}$. Hence
\begin{equation}
\label{eq:cycle-logconvex}
a_i^2\ge a_{i-1}a_{i+1}
\qquad (i=1,\dots,k).
\end{equation}
Multiplying \eqref{eq:cycle-logconvex} over all $i$ gives equality of the two products. Since all terms are positive, equality must hold in every inequality in \eqref{eq:cycle-logconvex}. It must also hold in both preceding factor estimates. Therefore
\[
d_{v_i}^w-a_i=a_{i-1}
\qquad\text{for every }i.
\]
Thus no $v_i$ is incident with an edge outside $C$. The cycle is consequently a connected component of $G$, and connectedness gives $G=C_k$.

We have shown that if $\mathcal S(G)$ contains a cycle, then $G$
itself is a cycle of length at least $6$. Otherwise $\mathcal S(G)$
is a forest.
\end{proof}

\begin{example}[The forest condition is not sufficient]
\label{ex:forest-not-sufficient}
Let $G_6$ be obtained from the cycle
\[
xv_1v_2v_3v_4yx
\]
by adding a vertex $z$ adjacent to both $x$ and $y$. Then $xy,xz,yz$ lie in a triangle, whereas the five edges of the $x$--$y$ path through $v_1,\ldots,v_4$ are tree-like. Hence $\mathcal S(G_6)$ is that path together with the isolated vertex $z$, and is therefore a forest. Nevertheless,
\[
\DN_{\LLY}(G_6)=\infty.
\]

Indeed, suppose that a positive weight $w$ makes every edge nonnegatively curved. Write
\[
a_i=w_{v_{i-1}v_i}\quad(1\le i\le5),
\qquad v_0=x,\ v_5=y,
\]
and
\[
r=w_{xy},\qquad p=w_{xz},\qquad q=w_{yz}.
\]
Applying \eqref{eq:local-nonnegative} to the five tree-like path edges and multiplying the resulting inequalities gives
\begin{equation}
\label{eq:G6-path-product}
a_1a_5\ge(r+p)(r+q).
\end{equation}
Now define $f$ on $V(G_6)$ by
\[
f(x)=0,\quad f(v_1)=-1,\quad f(v_2)=0,\quad
f(v_3)=1,\quad f(v_4)=2,\quad f(y)=1,\quad f(z)=\frac12.
\]
The values differ by at most $1$ across every edge, so $f\in\operatorname{Lip}(1)$ and $f(y)-f(x)=1$. Since $\kappa_{\LLY}^w(x,y)\ge0$, \cref{eq:dual-LLY} gives
\[
0\le\Delta_wf(x)-\Delta_wf(y).
\]
After multiplying by the positive denominator $2(a_1+r+p)(a_5+r+q)$, this becomes
\[
0\le
4r^2+3r(p+q)+2pq-4a_1a_5-a_1q-a_5p.
\]
Using \eqref{eq:G6-path-product}, the right-hand side is at most
\[
-r(p+q)-2pq-a_1q-a_5p<0,
\]
a contradiction. Thus no admissible positive weight exists.
\end{example}

\begin{corollary}\label{cor:forest-consequences} Let $G$ be connected and locally finite with $\DN_{\LLY}(G)<\infty$, and suppose that \[ G\ne C_n\qquad\text{for every }n\ge6. \] Then every cycle of $G$ contains an edge that lies on a cycle of length $3$, $4$, or $5$. If $G$ is finite, then \begin{equation} \label{eq:forest-consequences} |\Etl(G)|\le |V(G)|-1, \qquad \beta(G)\le |E(G)\setminus\Etl(G)|. \end{equation} \end{corollary}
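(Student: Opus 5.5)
The corollary follows directly from \cref{thm:necessary}. Since $\DN_{\LLY}(G)<\infty$ and $G$ is not a cycle $C_n$ with $n\ge6$, the theorem tells us that $\mathcal S(G)=(V,\Etl(G))$ is a forest.

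For the first assertion, let $C$ be any cycle of $G$. If every edge of $C$ belonged to $\Etl(G)$, then $C$ would be a cycle in $\mathcal S(G)$, which is impossible. Hence some edge of $C$ lies outside $\Etl(G)$, which by definition means it lies on a cycle of length $3$, $4$, or $5$.

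For the counting statements, assume $G$ is finite and apply \eqref{eq:cycle-rank} to the forest $\mathcal S(G)$. Its cycle rank vanishes, so
\[
|\Etl(G)|=|V(G)|-c(\mathcal S(G))\le|V(G)|-1,
\]
because $V\ne\varnothing$ gives at least one component. For the second inequality, use $c(G)=1$:
\[
\beta(G)=|E(G)|-|V(G)|+1=|E(G)\setminus\Etl(G)|+|\Etl(G)|-|V(G)|+1.
\]
By the first inequality, $|\Etl(G)|-|V(G)|+1\le0$, and therefore $\beta(G)\le|E(G)\setminus\Etl(G)|$.

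No step presents a real obstacle. The only points needing care are the hypothesis $G\ne C_n$, which is required to exclude the cycle alternative in \cref{thm:necessary}, and the fact that $\mathcal S(G)$ spans all of $V$, so that the vertex count in its cycle rank is $|V(G)|$.
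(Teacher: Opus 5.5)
Your proof is correct and follows the paper's argument. Both use \cref{thm:necessary} to make $\mathcal S(G)$ a forest, deduce the cycle statement from this, and read off the counts from the cycle rank. The only difference is cosmetic: you get $\beta(G)\le|E(G)\setminus\Etl(G)|$ by arithmetic from the first inequality, while the paper phrases the same computation as deleting $E(G)\setminus\Etl(G)$ to leave a forest.
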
 \begin{proof} By \cref{thm:necessary}, $\mathcal S(G)$ is a forest. Hence every cycle of $G$ contains an edge outside $\Etl(G)$, and such an edge lies on a cycle of length $3$, $4$, or $5$. If $G$ is finite, the forest $\mathcal S(G)$ has at most $|V(G)|-1$ edges, giving the first inequality. Moreover, deleting $E(G)\setminus\Etl(G)$ leaves the forest $\mathcal S(G)$, so \[ |E(G)\setminus\Etl(G)|\ge\beta(G), \] which gives the second inequality. \end{proof}

\subsection{Exact \texorpdfstring{$2$}{2}-core reduction}

We first record the extension lemma needed to reconstruct the hanging trees from their boundary loads.

\begin{lemma}[Rooted tree extension]
\label{lem:rooted-tree-extension}
Let $G$ be a connected locally finite graph, and let $R\subseteq G$ be
a finite induced subtree rooted at $r$. Assume that
\[
N_G(v)\subseteq V(R)
\qquad\text{for every }v\in V(R)\setminus\{r\},
\]
so that $R$ meets the rest of $G$ only at its root $r$. Fix positive weights on all edges of $G$ incident with $r$ but not
belonging to $R$, and choose arbitrary positive weights on the edges of
$R$ incident with $r$. Then the remaining edges of $R$ can be assigned
positive weights so that every edge of $R$ has nonnegative
Lin--Lu--Yau curvature in $G$.
\end{lemma}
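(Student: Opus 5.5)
The plan is to build the weights on $R$ top-down, from the root toward the leaves, using the tree-like edge formula (\cref{lem:tree-like-formula}). Before that, I check that this formula applies to every edge of $R$.

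\emph{Step 1: every edge of $R$ is tree-like in $G$.} Let $e$ be an edge of $R$, and suppose a cycle $C$ of $G$ contains $e$. Every vertex of $R$ other than $r$ has all its $G$-neighbours in $V(R)$. So if $C$ ever left $V(R)$, it would have to pass through $r$ twice, which a cycle cannot do. Hence $C\subseteq G[V(R)]=R$. This is impossible, because $R$ is an induced tree. So $e$ lies on no cycle at all.

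By \cref{lem:tree-like-formula}, for an edge $uv$ of $R$ we therefore have $\kappa_{\LLY}^{w,G}(u,v)\ge0$ if and only if
\[
w_{uv}^2\ge\bigl(d^w_{u,G}-w_{uv}\bigr)\bigl(d^w_{v,G}-w_{uv}\bigr).
\]
If $u$ is the parent of $v$ (with respect to the root $r$), the second factor is just the sum of the weights of the edges from $v$ to its children in $R$. This holds even at $r$, since every $G$-neighbour of $v\ne r$ lies in $R$.

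\emph{Step 2: top-down assignment.} Process the vertices of $R$ in order of increasing depth from $r$. The inductive claim is this: when we reach a vertex $u$, all edges of $G$ incident with $u$ already carry positive weights.
\begin{itemize}[label=--]
\item For $u=r$, this holds by hypothesis: the outside edges at $r$ are fixed, and the edges of $R$ at $r$ were chosen arbitrarily.
\item For $u\ne r$, it holds because the parent edge of $u$ was assigned earlier, and the edges from $u$ to its children are assigned at this step.
\end{itemize}

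Now take a child $v$ of $u$. The quantity $a:=d^w_{u,G}-w_{uv}$ is known and nonnegative. If $v$ is a leaf of $R$, the required inequality holds trivially, since its right-hand side is $0$. Otherwise $v$ has $k_v\ge1$ children. Give each edge from $v$ to a child the weight
\[
\frac{w_{uv}^2}{k_v\max\{a,1\}}>0.
\]
The child-load of $v$ is then $w_{uv}^2/\max\{a,1\}$, and this gives $a\cdot(\text{child-load of }v)\le w_{uv}^2$. This is exactly nonnegative curvature of $uv$.

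The edge $uv$ is decided at this point, and it stays decided: its curvature depends only on the weights at $u$ and at $v$, and both sets are now completely fixed. Since $R$ is finite, the recursion terminates. At that point every edge of $R$ has been checked, and each edge of $R$ was handled when its upper endpoint was processed.

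\emph{Where care is needed.} There is no real analytic obstacle. The one point that needs care is the bookkeeping in Step 1: the root edges must see the full weighted degree of $r$ in $G$, including the fixed outside edges, while non-root vertices see only $R$-edges. The induced and "meets only at the root" hypotheses are exactly what guarantee this. The degenerate case $a=0$ (for instance, when $r$ has degree $1$ in $G$) is covered by the $\max\{a,1\}$ in the formula above.
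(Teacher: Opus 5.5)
Your proof is correct and is essentially the paper's argument. The paper also proceeds outward from $r$, gives the children of each non-leaf $v$ a total weight $S_v\le w_{uv}^2/(d_u^w-w_{uv})$ (any positive total when $d_u^w=w_{uv}$), and applies the tree-like edge formula to the bridge $uv$; your per-child weight $w_{uv}^2/(k_v\max\{a,1\})$ is one admissible instance of that bound, your Step~1 spells out why every edge of $R$ is a bridge (which the paper only asserts), and the phrase ``assigned at this step'' should say that the edges from $u$ to its children are assigned when the parent of $u$ is processed.
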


\begin{proof}
Orient $R$ away from $r$ and proceed outward. Suppose the parent edge
$uv$ of a vertex $v$ already has weight $a>0$, and put
\[
P_{uv}:=d_u^w-a\ge0.
\]
If $v$ has children, assign their edges positive weights with total
$S_v\le a^2/P_{uv}$ when $P_{uv}>0$, and any positive total when
$P_{uv}=0$; if $v$ is a leaf, set $S_v=0$. Since only the root meets
$G-R$,
\[
d_v^w-a=S_v,
\qquad
a^2\ge(d_u^w-a)(d_v^w-a).
\]
The bridge $uv$ is tree-like, so \cref{lem:tree-like-formula} gives
$\kappa_{\LLY}^w(uv)\ge0$. Proceeding recursively proves the claim.
\end{proof}

Let $G$ be finite and connected with $K=\core(G)\ne\varnothing$.
Every component of $G-V(K)$ is a tree attached to $K$ by one edge, so
every edge outside $K$ is a bridge. The next lemma aggregates all
first-layer branches at a core vertex into one pendant load.

\begin{lemma}
\label{lem:boundary-aggregation}
Let $G$ be finite and connected, let $K=\core(G)\ne\varnothing$, and let $w$ be a positive edge weight on $G$. For $x\in A_G$, put
\[
b_x:=\sum_{u\in N_G(x)\setminus V(K)}w_{xu}>0.
\]
Give $\widehat K_G$ the positive weight $\widehat w$ defined by
\[
\widehat w_e=w_e\quad(e\in E(K)),
\qquad
\widehat w_{xx^\partial}=b_x\quad(x\in A_G).
\]
Then for every $x\sim y\in E(K)$,
\[
\kappa_{\LLY}^{w,G}(x,y)
=
\kappa_{\LLY}^{\widehat w,\widehat K_G}(x,y).
\]
\end{lemma}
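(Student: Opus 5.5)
We will prove the identity from the limit-free formula \eqref{eq:dual-LLY}. The idea is that the curvature of a core edge $xy$ sees the hanging forests only through the measures $\mu_x^{\alpha,w}$ and $\mu_y^{\alpha,w}$, and the transport from those measures only cares about how much mass sits on non-core neighbours. We first fix a core edge $xy$ and record two facts about the weighted degrees. By construction, $d_x^{\widehat w,\widehat K_G}=d_x^{w,G}$: the core edges keep their weights, and the single pendant edge carries $b_x$, the total weight of the edges from $x$ to $V(G)\setminus V(K)$. The same holds at $y$. So for every $z\in N_K(x)$ the transition ratios $w_{xz}/d_x^w$ agree in the two graphs.

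Next we compare the two infima in \eqref{eq:dual-LLY}. Let $f\in\operatorname{Lip}(1)$ on $G$ with $f(y)-f(x)=1$. We define $\widehat f$ on $\widehat K_G$ by $\widehat f=f$ on $V(K)$ and
\[
\widehat f(x^\partial):=\frac1{b_x}\sum_{u\in N_G(x)\setminus V(K)} w_{xu}f(u),
\]
a weighted average. Then $|\widehat f(x^\partial)-\widehat f(x)|\le1$, so $\widehat f$ is $1$-Lipschitz. Here we use that $x^\partial$ is a leaf of $\widehat K_G$, so the Lipschitz condition only has to be checked along its edge: any path through $x^\partial$ enters via $x$, and the distances among core vertices are unchanged. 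The averaging is chosen exactly so that $\Delta_{\widehat w}\widehat f(x)=\Delta_w f(x)$, and likewise at $y$. This gives $\kappa^{\widehat w}\le\kappa^{w}$.

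For the converse, take $g\in\operatorname{Lip}(1)$ on $\widehat K_G$ with $g(y)-g(x)=1$ and extend it to $G$. On each hanging tree rooted at the core vertex $x$, set $f(u)=g(x^\partial)$ for the first-layer neighbours $u$. Deeper vertices of the tree then receive values by walking outward, moving by at most $1$ per step; for instance, keep the value constant. In this way $\Delta_w f$ at $x$ and $y$ matches $\Delta_{\widehat w}g$.

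The step that needs care is checking that this extension $f$ is $1$-Lipschitz on all of $G$, which requires two observations.
\begin{enumerate}[label=(\roman*)]
\item \emph{Core distances are preserved.} Distances between core vertices are the same in $G$, $K$ and $\widehat K_G$, because a shortest path cannot usefully leave the core through a tree attached at a single vertex.
\item \emph{Paths through the forests.} A path between a vertex of one hanging tree and any other vertex passes through that tree's root. It therefore suffices to check Lipschitz along edges, since $d$ is a path metric. Along the edge from $x$ to a first-layer neighbour $u$, $|f(u)-f(x)|=|g(x^\partial)-g(x)|\le1$, and inside a tree the values are constant.
\end{enumerate}

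One more subtlety is that $y$ itself might lie in $A_G$, or that $x$ and $y$ might both carry hanging forests. These cases cause no trouble, because the forests at different roots are disjoint and each is handled independently. With the Lipschitz check in place, the two infima agree and the curvatures are equal.
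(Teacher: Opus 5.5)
Your proposal is correct and follows essentially the same route as the paper. Via \eqref{eq:dual-LLY}, you push a test function on $G$ down to $\widehat K_G$ by weighted averaging over the first-layer neighbours, and you lift a test function on $\widehat K_G$ back to $G$ by making it constant, equal to its value at $z^\partial$, on each hanging tree at $z$. Your extra remarks (core distances are preserved, and it suffices to check the Lipschitz condition edge by edge) just make explicit what the paper leaves implicit: each hanging tree meets $K$ through a single edge.
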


\begin{proof}
Put $U_z=N_G(z)\setminus V(K)$. Fix $x\sim y\in E(K)$. For any
$f\in\operatorname{Lip}(1)$ with $f(y)-f(x)=1$, keep the values of $f$
on $K$ and set
\[
\widehat f(z^\partial)
:=\frac1{b_z}\sum_{u\in U_z}w_{zu}f(u)
\qquad(z\in A_G).
\]
Since \[ f(u)\in[f(z)-1,f(z)+1] \qquad\text{for every }u\in U_z, \] and $\widehat f(z^\partial)$ is a weighted average of the values $f(u)$, it also lies in this interval. Hence \[ |\widehat f(z^\partial)-f(z)|\le1, \] so $\widehat f$ is $1$-Lipschitz on $\widehat K_G$. Since $b_z$ is the total weight of the edges from $z$ to $U_z$ and $\widehat f(z^\partial)$ is their weighted average, we have \[ d_z^{\widehat w}=d_z^w, \qquad \Delta_{\widehat w}\widehat f(z)=\Delta_w f(z) \quad(z=x,y). \] Hence, by \eqref{eq:dual-LLY}, \[ \kappa_{\LLY}^{w,G}(x,y)\ge \kappa_{\LLY}^{\widehat w,\widehat K_G}(x,y). \]

Conversely, let $\widehat f\in\operatorname{Lip}(1)$ on $\widehat K_G$ satisfy $\widehat f(y)-\widehat f(x)=1$, and extend it to $G$ by assigning the value $\widehat f(z^\partial)$ to every vertex in the component attached at $z$. The extension is $1$-Lipschitz, and each attached component contributes at its root exactly as the pendant edge $zz^\partial$. Hence the Laplacians at $x$ and $y$ are unchanged, and \eqref{eq:dual-LLY} gives the reverse inequality.
\end{proof}

\begin{proof}[Proof of \cref{thm:core-reduction}]
Let $K=\core(G)\ne\varnothing$. Suppose first that $w$ is admissible on $G$. Let $\widehat w$ be the weight on $\widehat K_G$ given by \cref{lem:boundary-aggregation}. Then every core edge has the same curvature under $w$ and $\widehat w$, while each added edge $xx^\partial$ is pendant and hence positively curved by \eqref{eq:leaf-curvature}. Thus $\widehat w$ is admissible on $\widehat K_G$. Conversely, let $\widehat w$ be admissible on $\widehat K_G$. Keep the weights on $E(K)$ and, for each $x\in A_G$, distribute the weight $\widehat w_{xx^\partial}$ among the first-layer edges of the hanging trees at $x$. Extend these weights through the hanging trees by \cref{lem:rooted-tree-extension}. Then all noncore edges have nonnegative curvature, while \cref{lem:boundary-aggregation} shows that the curvatures of the core edges agree with those under $\widehat w$. Hence the resulting weight on $G$ is admissible. 
\end{proof}

\subsection{A practical sufficient criterion}

We next give a simple sufficient condition for extending an admissible
weight from the $2$-core to the whole graph.

\begin{corollary}[Core extension criterion]
\label{cor:core-extension}
Let $G$ be finite and connected, and let
\[
K:=\core(G)\ne\varnothing.
\]
Suppose there exists a positive weight $w_0$ on $E(K)$ such that,
when $K$ is regarded as a weighted graph in its own right,
\[
\kappa_{\LLY}^{w_0,K}(e)\ge0
\qquad\text{for every }e\in E(K),
\]
and
\[
\kappa_{\LLY}^{w_0,K}(e)>0
\qquad\text{whenever }e\cap A_G\ne\varnothing.
\]
Then
\[
\DN_{\LLY}(G)<\infty.
\]
\end{corollary}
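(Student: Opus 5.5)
By \cref{thm:core-reduction}, it suffices to produce an admissible positive weight on $\widehat K_G$; finite distortion is automatic because $\widehat K_G$ is finite. The plan is a perturbation argument. Keep $w_0$ on $E(K)$, and give each pendant edge $xx^\partial$ ($x\in A_G$) a small weight $\varepsilon>0$. Call the resulting weight $\widehat w_\varepsilon$ on $\widehat K_G$.

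The pendant edges are handled by \eqref{eq:leaf-curvature}: every pendant edge $xx^\partial$ has positive curvature for any $\varepsilon>0$. It remains to control the core edges $e=xy\in E(K)$. I would split these into two classes.

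First, take a core edge with $e\cap A_G=\varnothing$. Neither endpoint carries a pendant edge, so the measures $\mu_x^{\alpha,\widehat w_\varepsilon}$ and $\mu_y^{\alpha,\widehat w_\varepsilon}$ coincide with those on $(K,w_0)$. The only remaining point is that the transport distance is computed in $\widehat K_G$ rather than in $K$. The added vertices are leaves, so $d_{\widehat K_G}$ restricted to $V(K)$ equals $d_K$. Both measures are supported in $V(K)$, and hence $W_1$ is unchanged. The curvature therefore equals $\kappa_{\LLY}^{w_0,K}(e)\ge0$. An alternative route is \eqref{eq:dual-LLY}: a $1$-Lipschitz $f$ on $\widehat K_G$ restricts to one on $K$, a $1$-Lipschitz function on $K$ extends by setting $f(x^\partial)=f(x)$, and $\Delta f(x),\Delta f(y)$ involve only $K$-neighbors.

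Second, take a core edge meeting $A_G$. Here I would use continuity in $\varepsilon$. The natural device is to regard $(K,w_0)$ as $\widehat K_G$ with weight $0$ on the pendant edges. The measures $\mu_x^{\alpha,\widehat w_\varepsilon}$ depend continuously on $\varepsilon\in[0,\infty)$, since $d_x^w$ stays bounded away from $0$ by the core edges. \cref{lem:continuity} is stated only for positive weights, so I would instead argue directly from \eqref{eq:dual-LLY}. For $f$ Lipschitz with $f(y)-f(x)=1$, the quantity $\Delta_{\widehat w_\varepsilon}f(x)$ differs from $\Delta_{w_0}f|_K(x)$ by at most $C\varepsilon$, because $|f(x^\partial)-f(x)|\le1$ and the normalizing denominators change by $\varepsilon$. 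The same holds at $y$. Restriction and extension of Lipschitz functions, as in the first class, then give
\[
|\kappa_{\LLY}^{\widehat w_\varepsilon}(e)-\kappa_{\LLY}^{w_0,K}(e)|\le C\varepsilon ,
\]
with $C$ depending only on the degrees and on the values of $w_0$. Since there are finitely many such edges, each with strictly positive curvature on $(K,w_0)$, a sufficiently small $\varepsilon$ keeps all of them positive.

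The main obstacle is this uniform $O(\varepsilon)$ estimate in the dual formula. The infimum is taken over a non-compact family of $f$. After normalizing $f(x)=0$, however, only the values of $f$ on $N[x]\cup N[y]$ matter, and these are bounded. The error bound is therefore uniform over the family. With this estimate in hand, $\widehat w_\varepsilon$ is admissible on $\widehat K_G$, and \cref{thm:core-reduction} gives $\DN_{\LLY}(G)<\infty$.
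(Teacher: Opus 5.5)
Your proposal is correct and follows essentially the same route as the paper. You give the proxy edges weight $\varepsilon$ and note they are pendant, hence positively curved. Core edges avoiding $A_G$ keep their curvature. For core edges meeting $A_G$, you obtain a uniform $O(\varepsilon)$ curvature estimate from \eqref{eq:dual-LLY} by restricting and extending $1$-Lipschitz functions, then conclude via \cref{thm:core-reduction}. Your justification that $d_{\widehat K_G}$ restricts to $d_K$ on $V(K)$ makes explicit the step the paper simply asserts ("its curvature is unchanged").
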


\begin{proof} If $A_G=\varnothing$, then $G=K$, and there is nothing to prove. For $\varepsilon>0$, let $w^\varepsilon$ be the weight on $\widehat K_G$ obtained by keeping $w_0$ on $E(K)$ and assigning weight $\varepsilon$ to every proxy edge $xx^\partial$, $x\in A_G$. We first claim that, for every $e\in E(K)$, \[ \kappa_{\LLY}^{w^\varepsilon,\widehat K_G}(e) \longrightarrow \kappa_{\LLY}^{w_0,K}(e) \qquad (\varepsilon\downarrow0). \] Fix $e=xy$. In the dual formula \eqref{eq:dual-LLY}, we may restrict to functions satisfying \[ f(x)=0,\qquad f(y)=1, \] since adding a constant changes neither the constraint nor the dual objective. Such $1$-Lipschitz functions are uniformly bounded on the finite graph $\widehat K_G$. For $z\in\{x,y\}\cap A_G$, \[ \Delta_{w^\varepsilon}f(z) = \frac{ \displaystyle \sum_{u\sim_K z}w_{0,zu}\bigl(f(u)-f(z)\bigr) + \varepsilon\bigl(f(z^\partial)-f(z)\bigr) }{ d_z^{w_0}+\varepsilon }. \] Since $|f(z^\partial)-f(z)|\le1$ and the functions under consideration are uniformly bounded, there is a constant $C>0$, independent of $f$ and sufficiently small $\varepsilon$, such that \[ \left| \Delta_{w^\varepsilon}f(z) - \Delta_{w_0}(f|_K)(z) \right| \le C\varepsilon. \] If $z\notin A_G$, the two Laplacians are identical. Hence, after enlarging $C$ if necessary, \[ \left| \bigl(\Delta_{w^\varepsilon}f(x)-\Delta_{w^\varepsilon}f(y)\bigr) - \bigl(\Delta_{w_0}(f|_K)(x)-\Delta_{w_0}(f|_K)(y)\bigr) \right| \le C\varepsilon. \] Since $f|_K$ is admissible in the dual formula on $K$, taking the infimum over $f$ gives \[ \kappa_{\LLY}^{w^\varepsilon,\widehat K_G}(e) \ge \kappa_{\LLY}^{w_0,K}(e)-C\varepsilon. \] Conversely, every $1$-Lipschitz function $g$ on $K$ with $g(y)-g(x)=1$ extends to $\widehat K_G$ by setting $g(z^\partial)=g(z)$ for $z\in A_G$. For this extension, \[ \Delta_{w^\varepsilon}g(z) = \Delta_{w_0}g(z)+O(\varepsilon) \qquad(z=x,y), \] uniformly in $g$. Taking infima in \eqref{eq:dual-LLY} therefore gives \[ \kappa_{\LLY}^{w^\varepsilon,\widehat K_G}(e) \le \kappa_{\LLY}^{w_0,K}(e)+O(\varepsilon). \] Thus \[ \kappa_{\LLY}^{w^\varepsilon,\widehat K_G}(e) \longrightarrow \kappa_{\LLY}^{w_0,K}(e). \] Now let \[ \eta := \min\left\{ \kappa_{\LLY}^{w_0,K}(e): e\in E(K),\ e\cap A_G\ne\varnothing \right\}>0. \]
For sufficiently small $\varepsilon>0$, every core edge meeting $A_G$
therefore has curvature at least $\eta/2$.
If $e\cap A_G=\varnothing$, its curvature is unchanged and hence remains nonnegative. Every proxy edge is pendant and therefore has positive curvature by \eqref{eq:leaf-curvature}. Thus $w^\varepsilon$ is admissible on $\widehat K_G$. By \cref{thm:core-reduction}, \[ \DN_{\LLY}(G)<\infty. \] \end{proof}

For a unicyclic graph the $2$-core is its unique cycle, so the core
extension criterion and the forest obstruction give a complete
classification.

\begin{corollary}[Unicyclic classification]
\label{cor:unicyclic-classification}
Let $G$ be a finite connected unicyclic graph, and let $C_n$ be its
unique cycle. Then
\[
\DN_{\LLY}(G)<\infty
\]
if and only if exactly one of the following holds:
\begin{enumerate}[label=\textup{(\roman*)}]
\item $3\le n\le5$;
\item $n\ge6$ and $G=C_n$.
\end{enumerate}
Moreover, in case \textup{(ii)},
\[
\DN_{\LLY}(G)=1.
\]
\end{corollary}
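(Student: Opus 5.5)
We prove the two directions separately. Since $G$ is unicyclic, its $2$-core is the cycle $K=C_n$.

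\emph{Case $n\ge6$.} Every edge of $C_n$ lies on no cycle other than $C_n$ itself, and $C_n$ has length at least $6$. Hence every cycle edge is tree-like. The remaining edges are bridges, so they are tree-like as well. Thus $\mathcal S(G)=G$, which contains the cycle $C_n$ and so is not a forest. \Cref{thm:necessary} therefore forces $G=C_n$ whenever $\DN_{\LLY}(G)<\infty$.

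Conversely, if $G=C_n$ with $n\ge6$, give $C_n$ the unit weight. Each edge is tree-like with $d_x=d_y=2$, so \cref{lem:tree-like-formula} gives curvature $2\cdot1\cdot(\tfrac12+\tfrac12)-2=0$. The unit weight is therefore admissible, and $\DN_{\LLY}(G)=1$. This settles both directions when $n\ge6$, including the ``moreover'' clause.

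\emph{Case $3\le n\le5$.} Here we show $\DN_{\LLY}(G)<\infty$ always, via \cref{cor:core-extension}. It suffices to check that the unit weight on the bare cycle $K=C_n$ gives strictly positive LLY curvature on every edge. Then the condition on edges meeting $A_G$ holds automatically, whatever $A_G$ is.

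We verify this positivity with the dual formula \eqref{eq:dual-LLY} on $C_n$ with unit weights. Fix an edge $xy$, normalize $f(x)=0$, $f(y)=1$, and let $x'$, $y'$ be the other neighbors of $x$ and $y$. Then
\[
\Delta f(x)-\Delta f(y)=\tfrac12\bigl(1+f(x')\bigr)-\tfrac12\bigl(f(y')-2\bigr)=\tfrac12\bigl(3+f(x')-f(y')\bigr).
\]
The bound on $f(y')-f(x')$ depends on $n$:
\begin{itemize}
\item for $n=3$, $x'=y'$, so the expression equals $3/2$;
\item for $n=4$, $x'\sim y'$, so $f(y')-f(x')\le1$;
\item for $n=5$, $d(x',y')=2$, so $f(y')-f(x')\le2$.
\end{itemize}
The resulting curvatures are $3/2$, $1$, and $1/2$ respectively, all positive.

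These are the classical values for cycles of length $3$, $4$, $5$, so this step is routine. The only point needing care is that \cref{cor:core-extension} uses curvature computed in $K$ as a standalone graph, which is exactly what was computed. Applying the corollary gives $\DN_{\LLY}(G)<\infty$.

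Combining the two cases: $\DN_{\LLY}(G)<\infty$ exactly when (i) or (ii) holds, and these are mutually exclusive since they concern disjoint ranges of $n$.
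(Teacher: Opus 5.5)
Your proposal is correct and follows the paper's proof essentially verbatim: for $n\ge6$ you use the forest obstruction (\cref{thm:necessary}) together with the zero-curvature unit weight on $C_n$, and for $3\le n\le5$ you apply \cref{cor:core-extension} to the unit-weighted $2$-core, whose curvature is strictly positive. One small point: your dual-formula computation only bounds $f(y')-f(x')$ from above, so it establishes $3/2$, $1$, $1/2$ as lower bounds rather than exact values; strict positivity is all \cref{cor:core-extension} needs, so this does not affect the argument.
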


\begin{proof}
The $2$-core of $G$ is its unique cycle $C_n$.

Suppose first that $3\le n\le5$. Under the unit weight, a direct
calculation gives
\[
\kappa_{\LLY}(C_3)=\frac32,
\qquad
\kappa_{\LLY}(C_4)=1,
\qquad
\kappa_{\LLY}(C_5)=\frac12
\]
on every cycle edge. Thus the $2$-core has strictly positive
Lin--Lu--Yau curvature, and \cref{cor:core-extension} yields
\[
\DN_{\LLY}(G)<\infty.
\]

Now let $n\ge6$. Every edge of the unique cycle is tree-like in $G$.
Hence $\mathcal S(G)$ contains the cycle $C_n$. If
$\DN_{\LLY}(G)<\infty$, \cref{thm:necessary} therefore forces
\[
G=C_n.
\]
Conversely, if $G=C_n$, then the unit weight has zero curvature on
every edge by \eqref{eq:tree-like-curvature}. Hence
\[
\DN_{\LLY}(C_n)=1.
\]
\end{proof}

\section{Finite trees: fixed points and topology}
\label{sec:finite-trees}

\subsection{Finite-tree fixed-point formula}
\label{subsec:finite-fixed-point}

\begin{proof}[Proof of \cref{thm:tree-fixed-point}] By \cref{lem:rooted-tree-extension}, applied with $G=R=T$, there exists a positive admissible weight on $T$. Since $T$ is finite, rescaling gives a normalized admissible weight, hence a feasible vector in $\mathcal X_T$. Applying \cref{lem:monotone-iteration} therefore produces a fixed point of $F_T$. For uniqueness, let $x,y$ be fixed points and set \[ Rat:=\max_{e\in E(T)}\frac{x_e}{y_e}. \]
If $Rat>1$, choose $e=u\sim v$ attaining it. Then $e$ is non-pendant,
since every pendant edge has fixed-point coordinate $1$. Thus both
complementary loads at $e$ are at least $1$, so the maximum with $1$ in
\eqref{eq:distortion-map} is inactive at this coordinate for both fixed
points. Hence
\[
\mathrm{Rat}^2=\left(\frac{x_e}{y_e}\right)^2=
\frac{A_{u\mid v}(x)}{A_{u\mid v}(y)}
\frac{A_{v\mid u}(x)}{A_{v\mid u}(y)}\le
\frac{\mathrm{Rat}\,A_{u\mid v}(y)}{A_{u\mid v}(y)}
\frac{\mathrm{Rat}\,A_{v\mid u}(y)}{A_{v\mid u}(y)}
=\mathrm{Rat}^2,
\]
where the inequality follows from
$x_f\le \mathrm{Rat}\,y_f$ for every $f\in E(T)$.
Thus equality holds in both load estimates. Since all summands are
positive, every edge adjacent to $e$ has the same ratio $Rat$. Repeating
this along the finite tree reaches a pendant edge, whose two fixed-point
coordinates are both $1$, a contradiction. Hence $Rat\le1$; exchanging
$x$ and $y$ gives uniqueness.

Let the unique fixed point be $w_T^\star$. Iterating from any normalized
admissible weight $z$ gives $w_T^\star\le z$ coordinatewise. Since
$w_T^\star$ is itself admissible,
\[
\DN_{\LLY}(T)=\max_{e\in E(T)}w_T^\star(e).
\]

It remains to prove uniqueness of the normalized optimal weight. Put
$D=\DN_{\LLY}(T)$ and let $z$ be normalized and optimal. If $D=1$,
then $z\equiv1=w_T^\star$. Assume $D>1$ and choose an edge $e_0$ with
$w_T^\star(e_0)=D$. This edge is non-pendant. If $z_f>w_T^\star(f)$ for some edge $f$, let \[ f=e_1,e_2,\ldots,e_m=e_0 \] be the unique edge path from $f$ to $e_0$. We show inductively that \[ z_{e_i}>w_T^\star(e_i) \qquad (1\le i\le m). \] Suppose this holds for $e_i$, and let $e_{i+1}=u\sim v$. Since $e_i$ contributes to one of the complementary loads of $e_{i+1}$, say $A_{u\mid v}$, while $z\ge w_T^\star$ coordinatewise, \[ A_{u\mid v}(z)>A_{u\mid v}(w_T^\star), \qquad A_{v\mid u}(z)\ge A_{v\mid u}(w_T^\star). \] Hence feasibility of $z$ and the fixed-point equation for $w_T^\star$ give \[ z_{e_{i+1}}^2 \ge A_{u\mid v}(z)A_{v\mid u}(z) > A_{u\mid v}(w_T^\star)A_{v\mid u}(w_T^\star) = \bigl(w_T^\star(e_{i+1})\bigr)^2. \] Thus \[ z_{e_{i+1}}>w_T^\star(e_{i+1}), \] and in particular $z_{e_0}>D$, a contradiction. Therefore
$z=w_T^\star$. Scale invariance gives the equivalent unnormalized
statement.

Finally, the fixed-point equation gives zero curvature on non-pendant
edges, while pendant edges have weight $1$ and positive curvature.
\end{proof}

\subsection{Examples}
\label{subsec:tree-examples}

The fixed-point description becomes explicit for several small tree families. We record two examples in which the unweighted tree has a negatively curved non-pendant edge, so the distortion number is genuinely larger than $1$.

\begin{example}[Double stars]
\label{ex:double-star}
For integers $m,n\ge1$, let $ST_{m,n}$ denote the double star obtained from an edge $x\sim y$ by attaching $m$ leaves to $x$ and $n$ leaves to $y$. The edge $x\sim y$ is the unique non-pendant edge. Since the canonical weight $w^\star$ assigns weight $1$ to every pendant edge, if
\[
t:=w_{xy}^{\star},
\]
then the fixed-point equation is
\[
t^2=mn.
\]
Consequently,
\begin{equation}
\label{eq:double-star-DN}
\DN_{\LLY}(ST_{m,n})=\sqrt{mn}.
\end{equation}
When $m,n\ge2$, the unit weight is not admissible. Indeed, \[ \kappa_{\LLY}(x,y) =2\left(\frac{1}{m+1}+\frac{1}{n+1}\right)-2<0. \] Thus a nonuniform weighting is necessary.

This family also shows that the number of branch vertices does not
control the distortion number from above: $|B(ST_{m,n})|=2$ is fixed,
while \eqref{eq:double-star-DN} can be arbitrarily large.
\end{example}

\begin{example}[Three-center tree]
\label{ex:three-center}
Consider a tree $T$ with three branch vertices $x,y,z$ on a path $x\sim y\sim z$. Attach $m\ge2$ leaves to each of $x$ and $z$, and attach $r\ge1$ leaves to $y$. Let
\[
a=w_{xy}^{\star},
\qquad
b=w_{yz}^{\star},
\]
where $w^\star$ is the corresponding canonical weight.
The canonical fixed-point equations are
\begin{equation}
\label{eq:three-center-system}
a^2=m(r+b),
\qquad
b^2=m(r+a).
\end{equation}
Subtracting the two equations gives
\[
(a-b)(a+b+m)=0,
\]
so positivity forces $a=b$. Therefore the system reduces to
\[
a^2=m(r+a),
\]
and hence
\begin{equation}
\label{eq:three-center-DN}
\DN_{\LLY}(T)
=a
=\frac{m+\sqrt{m^2+4mr}}{2}.
\end{equation}
For $m=2$ and $r=1$, this gives
\[
\DN_{\LLY}(T)=1+\sqrt3.
\]
\end{example}

\subsection{suppression}
\label{subsec:orientation-suppression}

\begin{lemma}[One-vertex suppression]
\label{lem:one-suppression}
Let $T'$ be obtained from a finite tree $T$ by suppressing one degree-two vertex. Then
\[
\DN_{\LLY}(T')\le\DN_{\LLY}(T).
\]
\end{lemma}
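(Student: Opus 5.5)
The plan is to start from the canonical weight $w^\star:=w_T^\star$ of \cref{thm:tree-fixed-point}, with $D:=\DN_{\LLY}(T)=\max_e w^\star_e$. From it we build a normalized feasible vector on $T'$ whose maximum is at most $D$. By \eqref{eq:supersolution} such a vector is a normalized admissible weight, so it bounds $\DN_{\LLY}(T')$ by $D$.

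Let $v$ be the suppressed vertex, with neighbors $x,y$, and put $a:=w^\star_{xv}$, $b:=w^\star_{vy}$. On $T'$ the edge $xv,vy$ pair becomes the single edge $xy$. Since $F_{T'}$ is coordinatewise nondecreasing and the constraint $w_{uv}^2\ge A_{u\mid v}A_{v\mid u}$ is homogeneous of degree $2$, the only delicate constraint is the one on the new edge $xy$, together with the effect of the new weight on the loads at $x$ and $y$.

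\emph{Pendant case.} Suppose, say, $x$ is a leaf. Set $w'_{xy}=1$ and keep all other weights. The edge $xy$ is pendant in $T'$, hence feasible. At $y$ the load changes by $1-b\le 0$, so every other constraint still holds by monotonicity. The vector stays normalized, and its maximum is at most $D$.

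\emph{Non-pendant case.} Both edges have zero curvature, so
\[
a^2=A_{x\mid v}\,b,\qquad b^2=A_{y\mid v}\,a .
\]
If $a=b$, set $w'_{xy}=a$. Then $a^2=A_{x\mid v}A_{y\mid v}$, and all loads at $x$ and $y$ are unchanged, so we are done.

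Otherwise, by symmetry assume $a<b$. Set $t:=b/a>1$, put $w'_{xy}:=b$, multiply every edge of the component of $T-v$ containing $x$ (the ``$x$-side'') by $t$, and keep the $y$-side unchanged. The checks are as follows.
\begin{itemize}
\item Edges strictly inside the $x$-side stay feasible by homogeneity, since the weight at $x$ facing $v$ is now $b=ta$.
\item Edges on the $y$-side are untouched, and the load at $y$ is unchanged.
\item For the edge $xy$ we need $b^2\ge tA_{x\mid v}A_{y\mid v}$. The right side equals $(b/a)(a^2/b)(b^2/a)=b^2$, so this holds with equality.
\item The $y$-side contains a leaf of $T$, whose pendant edge keeps weight $1$. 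All other weights are at least $1$ because $t>1$. Hence the vector is normalized.
\end{itemize}

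The main obstacle is the upper bound: the scaled $x$-side weights must not exceed $D$. For this I will use the orientation structure.
\begin{enumerate}
\item Since $d_v^{w^\star}=a+b$ and $b>a$, relation \eqref{eq:harmonic-edge} forces $v\to y$. By \cref{lem:one-outgoing}, $xv$ is then oriented $x\to v$.
\item Applying \cref{lem:one-outgoing} again at $x$, every other non-pendant edge at $x$ points into $x$. Recursively outward, every non-pendant edge of the $x$-side is strictly oriented toward $v$; no tie edges can occur.
\item Weights strictly increase along consecutive co-oriented edges, so every non-pendant $x$-side weight is less than $a$. Pendant weights are $1\le a$.
\item Hence after scaling all $x$-side weights are at most $ta=b\le D$.
\end{enumerate}
This yields a normalized admissible weight on $T'$ with distortion at most $D$, proving $\DN_{\LLY}(T')\le \DN_{\LLY}(T)$.
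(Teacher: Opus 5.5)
Your proposal is correct and follows essentially the same route as the paper. You rescale the $x$-side by $b/a$, give the new edge $xy$ weight $b$ so that it has zero curvature, and use \cref{lem:one-outgoing} to show that every rescaled $x$-side weight is at most $b\le D$. The only cosmetic difference is the leaf case: you give the new pendant edge weight $1$, lowering the load at $y$, whereas the paper keeps the old weight $w^\star_{vy}$, leaving the load unchanged; both choices work.
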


\begin{proof}
Let $v$ have neighbors $x,y$ and put $D=\DN_{\LLY}(T)$. Assume first
that neither $x$ nor $y$ is a leaf. For the canonical weight set
\[
a=w_{xv}^\star,
\qquad b=w_{vy}^\star,
\qquad
A_x=d_x^{w^\star}-a,
\qquad
A_y=d_y^{w^\star}-b.
\]
Zero curvature on $xv$ and $vy$ gives
\[
a^2=A_xb,
\qquad b^2=aA_y,
\qquad A_xA_y=ab.
\]
By symmetry assume $a\le b$. After deleting $v$, multiply all old
weights in the $x$-component by $b/a$, leave the $y$-component
unchanged, and give the new edge $xy$ weight $b$. Old curvatures are
unchanged, while
\[
\left(\frac ba A_x\right)A_y=b^2,
\]
so $xy$ has zero curvature.

It remains to control distortion when $a<b$. From $b^2>a^2=A_xb$ we get
$d_x^{w^\star}<d_v^{w^\star}$, so $xv$ is oriented from $x$ to $v$.
By \cref{lem:one-outgoing}, orientation propagates through the
$x$-component toward $v$; hence every non-pendant edge in the $x$-component has
canonical weight at most $a$. After multiplication by $b/a$, all its
weights lie in $[1,b]$. The other component and the new edge have
weights in $[1,D]$, so the new distortion is at most $D$. The case
$a=b$ is immediate.

If, say, $x$ is a leaf, then $w_{xv}^\star=1$. Delete $v$, give the new
pendant edge $xy$ the old weight $w_{vy}^\star$, and leave all other
weights unchanged. The new edge is positively curved and the distortion
is still at most $D$. The remaining cases are symmetric. Thus
$\DN_{\LLY}(T')\le D$.
\end{proof}

\begin{proof}[Proof of \cref{thm:suppression-intro}]
Apply \cref{lem:one-suppression} repeatedly until no degree-two vertex remains.
\end{proof}

\begin{example}[Strict suppression]
\label{ex:strict-suppression}
Let $T=ST_{2,1}$. By \cref{ex:double-star}, \[ \DN_{\LLY}(T)=\sqrt2. \] Suppressing the degree-two center of the $1$-leaf side produces the star $K_{1,3}$, for which \[ \DN_{\LLY}(K_{1,3})=1. \] Hence the inequality in \cref{thm:suppression-intro} can be strict.
\end{example}

\subsection{Branch vertices}
\label{subsec:branch-vertices}

We now turn to branch vertices. 
Set
\[
L(T):=\{v\in V(T):d_v=1\},
\qquad
\ell_T(v):=|N_T(v)\cap L(T)|.
\]
Let $S$ be a tree with no degree-two vertices and at least two branch vertices, and let $w^\star$ be the canonical weight of $S$. Its non-pendant edges form a tree $H$ on $B(S)$; we call $H$ the \emph{branch skeleton} of $S$. For $v\in B(S)$, set
\[
s_v:=d_v^{w^\star}.
\]
For a branch edge $u\sim v\in E(H)$, by \eqref{eq:Aue}, we have
\[
A_{u\mid v}=\ell_S(u)+\sum_{\substack{u\sim z\in E(H)\\z\ne v}}w_{uz}^\star.
\]
Set
\[
p(u,v):=\frac{w_{uv}^\star}{w_{uv}^\star+A_{u\mid v}}.
\]
The zero-curvature equation \eqref{eq:tree-like-curvature} gives
\begin{equation}
\label{eq:p-sum-one}
p(u,v)+p(v,u)=1.
\end{equation}
Thus, by \cref{lem:one-outgoing}, $u\sim v$ is oriented from $u$ toward $v$ exactly when $p(u,v)>1/2$. Ties are left unoriented.

For $e=u\sim v\in E(H)$, let $H_{u\mid v}$ denote the component of $H-e$ containing $u$, and put
\[
m(u\mid v):=|V(H_{u\mid v})|.
\]
We next estimate the complementary load $A_{u\mid v}$ in terms of the number of branch vertices on the $u$-side of $uv$.

\begin{lemma}
\label{lem:strong-rooted-load}
Let $u\sim v\in E(H)$ satisfy $p(u,v)\ge1/2$. Then
\begin{equation}
\label{eq:strong-rooted-load}
A_{u\mid v}\ge 2m(u\mid v).
\end{equation}
Moreover,
\[
w_{uv}^\star>A_{u\mid v}\ge2m(u\mid v)
\]
when $p(u,v)>1/2$, while
\[
w_{uv}^\star=A_{u\mid v}\ge2m(u\mid v)
\]
when $p(u,v)=1/2$.
\end{lemma}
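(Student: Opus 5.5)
The proof goes by induction on $m(u\mid v)$, simultaneously over all directed branch edges. Two facts are used throughout. First, by \eqref{eq:harmonic-edge} and the zero-curvature equation, $(w_{uv}^\star)^2=A_{u\mid v}A_{v\mid u}$. Hence
\[
p(u,v)\ge\tfrac12 \iff w_{uv}^\star\ge A_{u\mid v} \iff A_{v\mid u}\ge w_{uv}^\star\ge A_{u\mid v},
\]
with strict inequalities exactly when $p(u,v)>1/2$. This already gives the stated dichotomy $w_{uv}^\star>A_{u\mid v}$ versus $w_{uv}^\star=A_{u\mid v}$, so only \eqref{eq:strong-rooted-load} remains. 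Second, $S$ has no degree-two vertices, so every $u\in B(S)$ satisfies $\ell_S(u)+\deg_H(u)\ge3$.

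\emph{Base case $m(u\mid v)=1$.} Here $u$ has no $H$-neighbour other than $v$. Then $A_{u\mid v}=\ell_S(u)=d_u-1\ge2$.

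\emph{Inductive step.} Let $z_1,\dots,z_k$ ($k\ge1$) be the other $H$-neighbours of $u$, and write $w_i:=w_{uz_i}^\star$, $m_i:=m(z_i\mid u)$, $A:=A_{u\mid v}$. Then $m(u\mid v)=1+\sum_i m_i$ and $A=\ell_S(u)+\sum_i w_i$.

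By \cref{lem:one-outgoing}, each edge $uz_i$ is oriented toward $u$ or is a tie. If $uv$ is strictly oriented, all other edges at $u$ point into $u$. If $uv$ is a tie, no edge at $u$ can be strictly outgoing. In either case $p(z_i,u)\ge1/2$, and the induction hypothesis gives $A_{z_i\mid u}\ge2m_i$.

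Naively summing $w_i\ge A_{z_i\mid u}\ge 2m_i$ loses the needed additive $2$ when $\ell_S(u)\le1$. So I will instead bound $w_i$ using the load on the $u$-side. Since $w_{uv}^\star\ge A$,
\[
A_{u\mid z_i}=\ell_S(u)+\sum_{j\ne i}w_j+w_{uv}^\star\ge A-w_i+A .
\]
Moreover $w_i\le A$. The zero-curvature equation on $uz_i$ then gives
\[
w_i^2=A_{z_i\mid u}A_{u\mid z_i}\ge 2m_i\,A_{u\mid z_i}.
\]

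\emph{Case $k\ge2$.} Use the weaker consequence $w_i\ge\sqrt{2m_iA}$. Summing, $A\ge\sum_i w_i\ge\sqrt A\sum_i\sqrt{2m_i}$. Therefore
\[
A\ge\Bigl(\sum_i\sqrt{2m_i}\Bigr)^2\ge 2\sum_i m_i+2\sqrt{2m_1}\sqrt{2m_2}\ge 2\sum_i m_i+4\ge 2m(u\mid v).
\]

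\emph{Case $k=1$.} Now $\ell:=\ell_S(u)\ge1$.
\begin{itemize}[label=\textup{--}]
\item If $\ell\ge2$, then $w_1\ge A_{z_1\mid u}\ge2m_1$ gives $A\ge 2+2m_1$.
\item If $\ell=1$, then $A=1+w_1$ and $A_{u\mid z_1}=1+w_{uv}^\star\ge 2+w_1$. Hence $w_1^2\ge 2m_1(2+w_1)$, so $w_1\ge m_1+\sqrt{m_1^2+4m_1}\ge 2m_1+1$, the last step because $4m_1\ge 2m_1+1$. This yields $A\ge 2m_1+2=2m(u\mid v)$.
\end{itemize}

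The main obstacle is this inductive step. The naive bound $w_i\ge 2m_i$ is not enough, and the fix is to feed $w_{uv}^\star\ge A_{u\mid v}$ back into the complementary load $A_{u\mid z_i}$ to strengthen the lower bound on each $w_i$. The delicate subcases are $k\ge2$ with $\ell_S(u)=0$, and $k=1$ with $\ell_S(u)=1$.
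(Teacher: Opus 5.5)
Your proof is correct. Its skeleton matches the paper's: strong induction on $m(u\mid v)$, the same base case, \cref{lem:one-outgoing} to control the child edges at $u$, and the same key move of feeding $w^\star_{uv}\ge A_{u\mid v}$ back into the complementary loads $A_{u\mid z_i}$.

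You differ in how the missing additive $2$ is recovered. The paper splits on $\ell_S(u)\in\{\ge2,\,1,\,0\}$ and sets $q_i=A_{u\mid u_i}-w_i\ge2R_i$. From this it extracts a per-child gap $\delta_i=w_i-A_{u_i\mid u}=w_iq_i/(w_i+q_i)>1$. That step uses $w_i>2$, i.e.\ that the child edges are strictly oriented, which in turn needs the extra remark that a branch vertex cannot carry two tie edges.

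You split instead on the number $k$ of children. For $k\ge2$, the cruder bound $w_i\ge\sqrt{2m_iA_{u\mid v}}$ and summation give $A_{u\mid v}\ge\bigl(\sum_i\sqrt{2m_i}\bigr)^2\ge 2m(u\mid v)+2$. Here the cross terms supply the constant, independently of $\ell_S(u)$. Only the one-child, one-leaf case needs an explicit quadratic, and that is the paper's $\ell=1$ computation in another form.

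Your route tolerates tie edges among the children, since it uses only the non-strict induction hypothesis, and it gives a larger margin when $k\ge2$. The paper's route handles any number of children with a single per-edge gap estimate.
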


\begin{proof}
Write $W=w_{uv}^\star$, $m=m(u\mid v)$, and
$\ell=\ell_S(u)$. Since $p(u,v)\ge1/2$,
\begin{equation}
\label{eq:boundary-dominates-load}
W\ge A_{u\mid v},
\end{equation}
with strict inequality exactly when $p(u,v)>1/2$. We prove
$A_{u\mid v}\ge2m$ by strong induction on $m$.

If $m=1$, the vertex $u$ has no branch neighbor on its side of the
edge, so $\ell\ge2$ and $A_{u\mid v}=\ell\ge2$.

Let $m>1$, and assume that the assertion holds whenever $m(u'\mid v')<m$ for an edge $u'\sim v'\in E(H)$ satisfying $p(u',v')\ge1/2$. Let $u_1,\ldots,u_c$ be the branch neighbors of $u$ in
$H_{u\mid v}$. Put
\[
w_i=w_{uu_i}^\star,
\qquad
A_i=A_{u_i\mid u},
\qquad
m_i=m(u_i\mid u).
\]
Each edge $u_i u$ is strictly oriented toward $u$. Indeed, if $uv$ is
strict, this follows from \cref{lem:one-outgoing}; if $uv$ is a tie,
\cref{lem:one-outgoing} excludes an outgoing orientation of $u_i u$,
while $u_i u$ cannot also be a tie since $u$ is a branch vertex. Hence the induction hypothesis gives
\begin{equation}
\label{eq:child-strong-load}
A_i\ge2m_i,
\qquad
w_i>A_i\ge2m_i.
\end{equation}
In particular, $w_i>2$.

Set
\[
R_i:=\ell+\sum_{j\ne i}w_j,
\qquad
B_i:=A_{u\mid u_i}=W+R_i.
\]
Since $W\ge w_i+R_i$,
\begin{equation}
\label{eq:Bi-gap}
q_i:=B_i-w_i\ge2R_i.
\end{equation}
From $w_i^2=A_iB_i$,
\begin{equation}
\label{eq:delta-formula}
\delta_i:=w_i-A_i
=\frac{w_iq_i}{w_i+q_i}.
\end{equation}

If $\ell\ge2$, then \eqref{eq:child-strong-load} immediately gives
\[
A_{u\mid v}=\ell+\sum_iw_i
\ge2+2\sum_i m_i=2m.
\]
If $\ell=1$, then $c\ge1$ and \eqref{eq:Bi-gap} gives $q_i\ge2$; since
$w_i>2$, \eqref{eq:delta-formula} gives $\delta_i>1$. Therefore
\[
A_{u\mid v}
=1+\sum_i(A_i+\delta_i)
>1+2\sum_i m_i+1=2m.
\]
If $\ell=0$, then $c\ge2$, so $R_i>2$, $q_i>4$, and again
$\delta_i>1$. Hence
\[
A_{u\mid v}
=\sum_i(A_i+\delta_i)
>2\sum_i m_i+c\ge2m.
\]
This proves the induction. The final strict/equality statements follow
from the relation between $W$ and $A_{u\mid v}$ above.
\end{proof}

In particular, if $D=\DN_{\LLY}(S)$ and a branch edge $u\sim v$ is strictly oriented from $u$ toward $v$, then
\[
2m(u\mid v)<w_{uv}^\star\le D,
\qquad\text{and hence}\qquad
m(u\mid v)<\frac{D}{2}.
\]

We next describe the terminal configuration of the orientation. Recall that an edge is a tie precisely when its endpoints have the same canonical weighted degree.

\begin{lemma}[Terminal configuration]
\label{lem:terminal-configuration}
Let $H$ be the branch skeleton of $S$. Exactly one of the following occurs:
\begin{enumerate}[label=\textup{(\roman*)}]
\item there is a unique vertex $r\in V(H)$ with maximal weighted degree $s_r$, and every edge incident with $r$ is strictly oriented toward $r$;
\item there are exactly two vertices $x,y\in V(H)$ with maximal weighted degree, they are adjacent, and $x\sim y$ is a tie edge.
\end{enumerate}
\end{lemma}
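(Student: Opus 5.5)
We will use that every edge of $H$ is non-pendant in $S$, so all have zero curvature under $w^\star$. Hence the orientation rule and \cref{lem:one-outgoing} apply to every edge of $H$: an edge $u\sim v$ of $H$ is oriented from $u$ to $v$ exactly when $s_u<s_v$, and it is a tie when $s_u=s_v$. The argument is purely combinatorial on the finite tree $H$ (which has at least two vertices by assumption), using two facts from \cref{lem:one-outgoing}. First, every vertex of $H$ has at most one outgoing strict edge. Second, a vertex with a strict outgoing edge has no incident tie.

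First we show that $H$ has at most one tie edge, and that a tie edge has no other incident edge that is not oriented toward it. Let $x\sim y$ be a tie. For another $H$-edge $xz$, the one-outgoing property forbids $x\to z$. If $xz$ were also a tie, then at $x$ we would have two edges with $p(x,y)=p(x,z)=1/2$, i.e. $w^\star_{xy}=d_x^{w^\star}/2=w^\star_{xz}$. Then $x$ has no further neighbors, so $\ell_S(x)=0$ and $d_x=2$. This is impossible, since $x\in B(S)$ and $S$ has no degree-two vertices (this is the same argument used in \cref{lem:strong-rooted-load}). Hence every other $H$-edge at $x$ or $y$ is strictly oriented toward $x$ or $y$. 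Now suppose there are two distinct ties $e,e'$. Consider the $H$-path between them. Its edges are non-ties adjacent (inductively) to a tie, or else the first non-tie edge leaving $e$ points into $e$. Walking from $e$ toward $e'$, the first path edge points toward $e$, so its far endpoint has an outgoing edge. Repeating, each subsequent edge must also point back toward $e$ (one outgoing edge per vertex). But by the same reasoning from $e'$, the last edge points toward $e'$, giving some vertex two outgoing edges or a strict outgoing edge alongside a tie, a contradiction.

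Next, we take the case with no tie. Every edge of the finite tree $H$ is strictly oriented and each vertex has out-degree at most $1$. Since $|E(H)|=|V(H)|-1$, exactly one vertex $r$ has out-degree $0$, and every edge at $r$ points toward $r$. Following outgoing edges from any vertex gives strictly increasing $s$ values and must terminate at $r$, so $s_r$ is the strict unique maximum. This is (i). If there is a tie $x\sim y$, then $x$ and $y$ have out-degree $0$, and all other vertices have out-degree at most $1$. Counting the $|V(H)|-2$ strict edges shows every other vertex has exactly one outgoing edge. Following these edges gives strictly increasing $s$ and ends at $x$ or $y$, so $s_x=s_y$ exceeds all other values. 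This is (ii). The two cases are mutually exclusive, since (i) has a unique maximizer and (ii) has two.

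The expected main obstacle is the step excluding two ties at one vertex. It relies on branch vertices in $S$ having degree at least $3$ together with the absence of degree-two vertices. The path argument excluding two separated ties should be handled cleanly by the out-degree count: ties contribute out-degree $0$ at both endpoints, so with $t$ ties the strict edges number $|V(H)|-1-t$. We would verify that this count forces $t\le1$ in a tree whose vertices have out-degree at most $1$.
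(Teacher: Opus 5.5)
Your proof is correct, but it takes a different route from the paper's. The paper works directly with the set $M$ of vertices of maximal weighted degree. For two maximizers it takes the path joining them and chooses a maximal block on which $s$ attains its minimum along that path. Such a block yields either two strict outgoing edges or a tie together with a strict outgoing edge, contradicting \cref{lem:one-outgoing}. Hence $s$ is constant along the path, so every path edge is a tie. Since a branch vertex carries at most one tie, the path is a single edge, which gives $|M|\le 2$ and both alternatives at once.

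You instead first prove globally that ties are vertex-disjoint and that there is at most one tie. You then use the fact that an orientation of a tree with out-degree at most one has its sinks fixed by an edge count. Your closing remark does work and is cleaner than your somewhat loosely worded path-propagation argument. With $t$ vertex-disjoint ties, the $|V(H)|-1-t$ strict edges must be absorbed by the $|V(H)|-2t$ non-tie vertices, each of out-degree at most one, so $t\le 1$ immediately.

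The paper's argument is shorter and never needs to discuss ties away from the maximum. Yours gives more structure. Every vertex outside the terminal configuration has exactly one outgoing edge, so the orientation is an in-arborescence toward $r$ or toward the tie edge. In case (i) there are no ties at all. This is the global orientation picture that the proof of \cref{lem:strong-rooted-load} later re-derives step by step when it orients the child edges $u_iu$ toward $u$.
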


\begin{proof}
A branch vertex cannot be incident with two tie edges, since each such edge would have weight equal to half the weighted degree of the branch vertex, leaving no weight for any other incident edge. Let $M$ be the set of vertices of $H$ with maximal weighted
degree. If $x,y\in M$ are distinct, consider their unique path $x=v_0,\ldots,v_k=y$.
If some interior vertex has weighted degree smaller than the maximum,
let
\[
s_*:=\min_{0\le t\le k}d_{v_t}^{w^*},
\]
and choose a maximal consecutive block:
\[v_i,\ldots,v_j\] such that
\[
d_{v_i}^{w^*}=\cdots=d_{v_j}^{w^*}=s_*,
\]
with $d_{v_{i-1}}^{w^*}>s_*$ when $i>0$ and
$d_{v_{j+1}}^{w^*}>s_*$ when $j<k$. A one-vertex block gives two strict outgoing edges; a
longer block gives a tie edge together with a strict outgoing edge.
Both contradict \cref{lem:one-outgoing}. Thus the weighted degree is
constant along the path. Since two consecutive tie edges cannot meet at
a branch vertex, $k=1$. Hence $|M|\le2$, and two maximal vertices, if
present, are adjacent and joined by a tie. The stated alternatives
follow.
\end{proof}

The final aggregation uses the complementary quantity
\[
\theta_{u\mid v}
:=\frac{A_{u\mid v}}{A_{u\mid v}+w_{uv}^\star}.
\]
By \eqref{eq:p-sum-one}, $\theta_{u\mid v}=p(v,u)$. In particular, in case \textup{(i)} of \cref{lem:terminal-configuration}, if $u$ is adjacent to the unique maximal vertex $r$, then
\begin{equation}
\label{eq:theta-root}
\theta_{u\mid r}
=p(r,u)
=\frac{w_{ur}^\star}{s_r}.
\end{equation}

\begin{proposition}[Branch bound without degree-two vertices]
\label{prop:branch-reduced}
Let $S$ be a finite tree without degree-two vertices. Then
\[
|B(S)|\le\left\lceil\DN_{\LLY}(S)\right\rceil.
\]
\end{proposition}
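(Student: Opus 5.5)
We prove this by analysing the terminal configuration of the orientation on the branch skeleton $H$. First the small cases. If $|B(S)|\le1$ there is nothing to prove, since $\DN_{\LLY}(S)\ge1$. So assume $|B(S)|\ge2$. Then $S$ has no degree-two vertices and at least two branch vertices, and the canonical weight $w^\star$ and the skeleton $H$ on $B(S)$ are defined. Put $D=\DN_{\LLY}(S)=\max_e w^\star_e$ and $n=|B(S)|=|V(H)|$. We apply \cref{lem:terminal-configuration} and treat its two alternatives separately. In both cases the aim is to bound the total number of branch vertices by summing the subtree sizes $m(u\mid\cdot)$ around the terminal vertex or edge.

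\emph{Case (ii): a tie edge $x\sim y$.} Here $p(x,y)=p(y,x)=1/2$. By \cref{lem:strong-rooted-load} applied in both directions,
\[
w^\star_{xy}=A_{x\mid y}\ge 2m(x\mid y),\qquad w^\star_{xy}=A_{y\mid x}\ge 2m(y\mid x).
\]
Since $m(x\mid y)+m(y\mid x)=n$, adding gives $2D\ge 2w^\star_{xy}\ge 2n$. Hence $n\le D\le\lceil D\rceil$.

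\emph{Case (i): a unique maximal vertex $r$.} Let $u_1,\dots,u_c$ be the branch neighbours of $r$. Every edge $u_i\sim r$ is strictly oriented toward $r$, so \cref{lem:strong-rooted-load} gives
\[
w^\star_{u_ir}>A_{u_i\mid r}\ge 2m_i,\qquad m_i:=m(u_i\mid r),\qquad n=1+\sum_i m_i .
\]
Bounding each $m_i$ separately by $D/2$ is too weak. Instead we aggregate using $\theta$. Equation \eqref{eq:theta-root} together with $\sum_i w^\star_{u_ir}\le s_r$ gives $\sum_i\theta_{u_i\mid r}\le 1$. On the other hand,
\[
\theta_{u_i\mid r}=\frac{A_i}{A_i+w_i},\qquad A_i:=A_{u_i\mid r},\quad w_i:=w^\star_{u_ir},
\]
and $w_i^2=A_i\,s_r\cdots$. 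Rather than chase that identity, we use the monotonicity of $t\mapsto t/(t+w_i)$: since $A_i\ge 2m_i$ and $w_i\le D$,
\[
\theta_{u_i\mid r}\ge\frac{2m_i}{2m_i+D}.
\]
Combining with $\sum_i\theta_{u_i\mid r}\le1$ yields
\[
\sum_i\frac{2m_i}{2m_i+D}\le 1 .
\]
Since $x\mapsto 2x/(2x+D)$ is concave and increasing with value $0$ at $x=0$, it is subadditive. Therefore
\[
\frac{2M}{2M+D}\le\sum_i\frac{2m_i}{2m_i+D}\le1,\qquad M:=\sum_i m_i .
\]
This is vacuous as it stands, so the bound must be sharpened. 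We use the slack $s_r=\ell_S(r)+\sum_i w_i$. If $\ell_S(r)\ge1$, then $\sum_i\theta_{u_i\mid r}\le 1-\ell_S(r)/s_r$. If $\ell_S(r)=0$, then $c\ge3$.

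\emph{Main obstacle.} The hardest step is this final aggregation in case (i): turning $\sum_i\theta_{u_i\mid r}\le1$, together with $A_i\ge2m_i$ and $w_i\le D$, into $1+\sum_i m_i\le\lceil D\rceil$. We would try a cleaner route first. Every child edge carries $w_i>2m_i$, so $D\ge\max_i w_i$. Using the zero-curvature relation $w_i=s_r\theta_{u_i\mid r}$, the weight $w_i$ equals $A_i s_r/(s_r-w_i)$, which can be rewritten as $w_i(s_r-w_i)=A_i s_r$. Writing $s_r=\sum_j w_j+\ell$, this gives
\[
A_i\le w_i\Bigl(1-\frac{w_i}{s_r}\Bigr).
\]
Summing over $i$ gives $2M\le\sum_i A_i\le\sum_i w_i-\sum_i w_i^2/s_r$. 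Combining this with $\max_i w_i\le D$ and Cauchy–Schwarz on $\sum_i w_i^2$ should give $2M\le s_r-\ell-\bigl(\sum_i w_i\bigr)^2/(c\,s_r)$. The remaining check, which we expect to close using $w_i\le D$, is that this forces $M\le D-1$, allowing for the integer rounding in $\lceil D\rceil$. The case $c=1$ must be handled separately via $\ell_S(r)\ge2$.
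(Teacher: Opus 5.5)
Your setup and case (ii) coincide with the paper's argument. Case (i), however, is left unfinished, and that is exactly where the content lies. The subadditivity step discards the information you need. Concavity only bounds $\sum_i f(m_i)$ from below by $f(M)$, and $f(M)=2M/(2M+D)<1$ always, so it can never yield a contradiction.

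What closes the argument is a termwise comparison that uses the strict orientation of each edge $u_i\sim r$. You already have $D\ge w_i>A_i\ge 2m_i$, hence $2m_i+D<2D$ and
\[
\frac{2m_i}{2m_i+D}>\frac{m_i}{D}\qquad\text{for each } i.
\]
So your own inequality $\sum_i 2m_i/(2m_i+D)\le 1$ gives $|B(S)|-1=\sum_i m_i<D$. Thus $|B(S)|<D+1$, and hence $|B(S)|\le\lceil D\rceil$. The paper phrases the same point as $D\theta_i\ge w_i\theta_i=\frac{w_iA_i}{A_i+w_i}>\frac{A_i}{2}\ge m_i$, using $w_i>A_i$, and then sums against $\sum_i\theta_i=\sum_i w_i/s_r\le1$. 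No slack from $\ell_S(r)$, no Cauchy--Schwarz, and no separate treatment of $c=1$ is needed.

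The alternative route you sketch would not work as written. The zero-curvature equation on $u_i\sim r$ is $w_i^2=A_i(s_r-w_i)$, equivalently $A_is_r=w_i(A_i+w_i)$. It is not $w_i(s_r-w_i)=A_is_r$. The inequality $A_i\le w_i(1-w_i/s_r)$ that you derive from it is false in general. For the double star $ST_{3,2}$, take $r$ to be the center carrying three leaves. Then $A_1=2$, $w_1=\sqrt6$, $s_r=3+\sqrt6$, and $w_1(1-w_1/s_r)=3\sqrt6/(3+\sqrt6)\approx1.35<2$.

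So, as submitted, the key inequality $|B(S)|-1<D$ in case (i) is unproved, and the route proposed for it rests on an incorrect identity. The one-line termwise comparison above is the missing step.
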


\begin{proof}
Put $b=|B(S)|$ and $D=\DN_{\LLY}(S)$; assume $b\ge2$. In case
\textup{(i)} of \cref{lem:terminal-configuration}, let $u_1,\ldots,u_k$ be the branch neighbors of the unique vertex
$r$ of maximal weighted degree, and set
\[
m_i=m(u_i\mid r),
\qquad
A_i=A_{u_i\mid r},
\qquad
w_i=w_{u_i r}^\star,
\qquad
\theta_i=\frac{A_i}{A_i+w_i}.
\]
By \cref{lem:strong-rooted-load}, $A_i\ge2m_i$ and $w_i>A_i$. Since
$D\ge w_i$,
\[
D\theta_i
\ge \frac{w_iA_i}{A_i+w_i}
>\frac{A_i}{2}\ge m_i.
\]
The rooted components partition $V(H)\setminus\{r\}$, while
\eqref{eq:theta-root} gives
\[
b-1=\sum_i m_i,
\qquad
\sum_i\theta_i
=\frac{\sum_iw_i}{s_r}\le1.
\]
Hence $b-1<D$.

In case \textup{(ii)}, let $xy$ be the terminal tie and put
$m_x=m(x\mid y)$, $m_y=m(y\mid x)$, and $w=w_{xy}^\star$. Then
\[
w=A_{x\mid y}=A_{y\mid x}
\]
and \cref{lem:strong-rooted-load} gives $w\ge2m_x,2m_y$. Therefore
\[
b=m_x+m_y\le2\max\{m_x,m_y\}\le w\le D.
\]
In both cases $b<D+1$, hence $b\le\lceil D\rceil$.
\end{proof}

\begin{proof}[Proof of \cref{thm:branch-intro}]
Let $S=\overline T$. By \eqref{eq:branch-preserved},
\cref{prop:branch-reduced}, and \cref{thm:suppression-intro},
\[
|B(T)|=|B(S)|
\le\left\lceil\DN_{\LLY}(S)\right\rceil
\le\left\lceil\DN_{\LLY}(T)\right\rceil.
\]
\end{proof}

\begin{remark}
The bound is sharp: \cref{ex:double-star} gives equality at $2$, and
\cref{ex:three-center} with $m=2,r=1$ gives equality after taking the ceiling. For a tree $S$ without degree-two vertices, the proof
also yields
\[
|B(S)|-1<\DN_{\LLY}(S)
\]
when the terminal configuration has a unique vertex of maximal weighted degree, and
$|B(S)|\le\DN_{\LLY}(S)$ when the terminal configuration has a tie edge.
\end{remark}

\section{Infinite trees: fixed points, exhaustion, and ends}
\label{sec:infinite-trees}

\subsection{Bounded fixed points and exhaustion}
\label{subsec:infinite-fixed-exhaustion}

For a locally finite infinite tree $T$, write $\mathcal F_b(T)$ for
the set of bounded normalized fixed points of $F_T$ and put
\[
\|x\|_\infty:=\sup_{e\in E(T)}x_e.
\]
All terminology is as in \cref{def:distortion-map}.

We first record the monotonicity that makes exhaustion possible.

\begin{lemma}[Restriction to a subtree]
\label{lem:infinite-restriction}
Let $T$ be a locally finite infinite tree, let $S\subseteq T$ be a subtree, and let $w$ be a positive edge weight on $T$. Then for every $uv\in E(S)$,
\begin{equation}
\label{eq:restriction-curvature}
\kappa_{\LLY}^{w|_S,S}(u,v)
\ge
\kappa_{\LLY}^{w,T}(u,v).
\end{equation}
Consequently, the restriction of an admissible weight to $S$ is admissible and
\[
\Dist(w|_S)\le\Dist(w)
\]
whenever the right-hand side is finite.
\end{lemma}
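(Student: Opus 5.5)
The plan is to use the tree-like edge formula. Every edge of a tree lies on no cycle at all, so \cref{lem:tree-like-formula} applies to $uv$ both in $S$ and in $T$. For $uv\in E(S)$ it gives
\[
\kappa_{\LLY}^{w,T}(u,v)=2w_{uv}\Bigl(\frac1{d_{u,T}^w}+\frac1{d_{v,T}^w}\Bigr)-2,
\qquad
\kappa_{\LLY}^{w|_S,S}(u,v)=2w_{uv}\Bigl(\frac1{d_{u,S}^{w|_S}}+\frac1{d_{v,S}^{w|_S}}\Bigr)-2 .
\]
Local finiteness makes both weighted degrees finite. Since $S\subseteq T$ and $w>0$, the edges of $S$ at $u$ form a subset of the edges of $T$ at $u$. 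Hence
\[
0<w_{uv}\le d_{u,S}^{w|_S}\le d_{u,T}^w,
\]
and the same holds at $v$. The right-hand side of the formula is nonincreasing in each weighted degree, so \eqref{eq:restriction-curvature} follows directly.

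One point needs care. If $u$ is a leaf of $S$, the formula should be read through \eqref{eq:leaf-curvature}, i.e.\ with $d_{u,S}^{w|_S}=w_{uv}$. The displayed formula already covers this, because the term $2w_{uv}/d_u=2$ cancels the $-2$. So no separate case is needed. I should also check that $S$, being a tree, is connected, so that curvature on $S$ is meant in its own right.

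For the consequences: if $w$ is admissible on $T$, then every edge of $S$ has $\kappa^{w|_S,S}\ge\kappa^{w,T}\ge0$, so $w|_S$ is admissible on $S$. Restricting to the subset $E(S)\subseteq E(T)$ gives $\sup_{E(S)}w\le\sup_{E(T)}w$ and $\inf_{E(S)}w\ge\inf_{E(T)}w$. Therefore $\Dist(w|_S)\le\Dist(w)$, and $w|_S$ is finite whenever $\Dist(w)$ is finite.

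There is no serious obstacle here. The only step needing attention is that the tree-like formula holds for every edge of both trees, including pendant edges of $S$ that are not pendant in $T$.
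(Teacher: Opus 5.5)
Your proposal is correct and follows essentially the same argument as the paper. Both apply the tree-like edge formula \eqref{eq:tree-like-curvature} on $S$ and on $T$, and note that restriction keeps $w_{uv}$ unchanged while it can only decrease the weighted degrees at $u$ and $v$, so the curvature can only increase; your remarks on pendant edges of $S$ and on the $\inf$/$\sup$ comparison spell out what the paper leaves as immediate.
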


\begin{proof}
By \eqref{eq:tree-like-curvature}, restriction leaves $w_{uv}$ unchanged
and can only decrease the weighted degrees at $u$ and $v$, so the
curvature can only increase. The other assertions are immediate.
\end{proof}

\begin{lemma}[Bounded fixed-point characterization]
\label{lem:infinite-fixed-criterion}
Let $T$ be a locally finite infinite tree. Then
\[
\DN_{\LLY}(T)<\infty
\quad\Longleftrightarrow\quad
\mathcal F_b(T)\ne\varnothing.
\]
When these equivalent conditions hold,
\begin{equation}
\label{eq:infinite-fixed-inf}
\DN_{\LLY}(T)
=
\inf_{x\in\mathcal F_b(T)}\|x\|_\infty.
\end{equation}
\end{lemma}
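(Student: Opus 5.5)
The equivalence and the formula \eqref{eq:infinite-fixed-inf} both come from two inequalities that run between admissible weights and bounded fixed points. The tools are the correspondence \eqref{eq:supersolution} and \cref{lem:monotone-iteration}. The hypothesis $\DN_{\LLY}(T)<\infty$ is exactly the existence of an admissible weight of finite distortion.

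\emph{From fixed points to admissible weights.} Suppose $x\in\mathcal F_b(T)$. Then $F_T(x)=x\le x$, so $x$ is feasible. By \eqref{eq:supersolution}, regarded as an edge weight, $x$ is a normalized admissible weight, and its distortion is $\|x\|_\infty/\inf_e x_e=\|x\|_\infty<\infty$. Hence $\DN_{\LLY}(T)\le\|x\|_\infty$. Taking the infimum over $\mathcal F_b(T)$ gives one direction of the equivalence, together with $\DN_{\LLY}(T)\le\inf_{x\in\mathcal F_b(T)}\|x\|_\infty$.

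\emph{From admissible weights to fixed points.} Suppose $\DN_{\LLY}(T)<\infty$, and fix $\varepsilon>0$. Choose an admissible $w$ with $\Dist(w)<\DN_{\LLY}(T)+\varepsilon$. By \cref{lem:scale}, rescale so that $w_{\inf}=1$; curvature and distortion are unchanged. Then $w\in[1,\infty)^{E(T)}$, $\sup_e w_e=\Dist(w)<\infty$, and $\inf_e w_e=1$. By \eqref{eq:supersolution}, $w$ is feasible and bounded, so \cref{lem:monotone-iteration} applies with $y=w$. It produces a bounded fixed point $x^\star\le w$ with $\inf_e x^\star_e=1$, so $x^\star\in\mathcal F_b(T)$ and $\|x^\star\|_\infty\le\|w\|_\infty<\DN_{\LLY}(T)+\varepsilon$. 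This gives the converse implication, and letting $\varepsilon\downarrow0$ gives the reverse inequality in \eqref{eq:infinite-fixed-inf}.

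\emph{The normalization step.} The only point needing care is that the infimum in the definition of $\DN_{\LLY}$ may not be attained and $w_{\inf}$ need not be a minimum. This is why I work with $\varepsilon$-optimal weights and use the inf-normalization. The preservation of $\inf=1$ comes directly from the final clause of \cref{lem:monotone-iteration}, since $\mathbf 1\le x^\star\le w$ forces $1\le\inf_e x^\star_e\le\inf_e w_e=1$. I do not expect any step to be a genuine obstacle.
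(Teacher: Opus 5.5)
Your proposal is correct and follows essentially the same route as the paper: bounded normalized fixed points are feasible and hence admissible, giving $\DN_{\LLY}(T)\le\|x\|_\infty$. Conversely, monotone iteration from a normalized bounded admissible weight produces an element of $\mathcal F_b(T)$ lying below it, and your $\varepsilon$-optimal selection is just an explicit version of the paper's step of taking the infimum over all bounded normalized admissible weights $y$.
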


\begin{proof}
Every bounded normalized fixed point is admissible, and hence
\[
\DN_{\LLY}(T)\le \|x\|_\infty
\qquad (x\in\mathcal F_b(T)).
\]
Conversely, if $y$ is any bounded normalized admissible weight, then
$y$ is a feasible vector of \cref{def:distortion-map} by
\eqref{eq:supersolution}, and \cref{lem:monotone-iteration} produces a bounded
normalized fixed point $x^\star\le y$. Therefore
\[
\inf_{x\in\mathcal F_b(T)}\|x\|_\infty
\le \|x^\star\|_\infty
\le \|y\|_\infty
=\Dist(y).
\]
Taking the infimum over all such $y$ gives
\[
\DN_{\LLY}(T)
=
\inf_{x\in\mathcal F_b(T)}\|x\|_\infty,
\]
and also proves the stated equivalence.
\end{proof}

\begin{lemma}[Exhaustion] \label{lem:infinite-exhaustion} Let $T$ be a locally finite infinite tree, and let \[ T_1\subset T_2\subset\cdots, \qquad \bigcup_{n\ge1}T_n=T, \] be an exhaustion by finite subtrees. Then \[ \DN_{\LLY}(T_n)\nearrow\DN_{\LLY}(T). \] If $\DN_{\LLY}(T)<\infty$, then a subsequence of the canonical weights $w_{T_n}^\star$ converges pointwise to a bounded normalized optimal fixed point of $F_T$. \end{lemma}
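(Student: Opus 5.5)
The argument has two halves: monotonicity of the sequence together with an upper bound, which come from restriction, and a lower bound obtained by compactness.

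\emph{Monotonicity and the upper bound.} Since $T_n\subseteq T_{n+1}$ are subtrees, \cref{lem:infinite-restriction} (whose proof applies verbatim to a finite ambient tree, because it only uses \eqref{eq:tree-like-curvature}) shows that the restriction of any admissible weight on $T_{n+1}$ to $T_n$ is admissible with no larger distortion. Hence $\DN_{\LLY}(T_n)\le\DN_{\LLY}(T_{n+1})$. Similarly, restricting admissible weights on $T$ gives $\DN_{\LLY}(T_n)\le\DN_{\LLY}(T)$. So the sequence is nondecreasing with limit $L\le\DN_{\LLY}(T)$. If $L=\infty$ we are done. Assume $L<\infty$; it remains to show $\DN_{\LLY}(T)\le L$ and to produce the fixed point.

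\emph{Compactness.} Let $w_n:=w^\star_{T_n}$ be the canonical weights from \cref{thm:tree-fixed-point}, extended arbitrarily (say by $1$) off $E(T_n)$. Then $1\le w_n\le\DN_{\LLY}(T_n)\le L$ on $E(T_n)$. Since $E(T)$ is countable, a diagonal argument gives a subsequence converging pointwise to some $w\in[1,L]^{E(T)}$. Fix an edge $uv\in E(T)$. For $n$ large, $T_n$ contains $uv$ and every edge of $T$ incident to $u$ or $v$; there are finitely many by local finiteness. Once all those edges lie in $T_n$, the loads $A^{T_n}_{u|v}(w_n)$ and $A^{T_n}_{v|u}(w_n)$ are the same finite sums as $A^{T}_{u|v}(w_n)$ and $A^{T}_{v|u}(w_n)$. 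The fixed-point identity $(w_n)_{uv}=\max\{1,\sqrt{A_{u|v}A_{v|u}}\}$ therefore passes to the limit, and $w$ is a bounded fixed point of $F_T$ with $\|w\|_\infty\le L$.

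\emph{Normalization.} This is the main obstacle, since the minimum value $1$ of each $w_n$ might drift off to infinity. If $T$ has a leaf, the pendant edge at it is pendant in every large $T_n$ and so has weight $1$; the limit then satisfies $\inf w=1$. In general I would argue directly that every fixed point of $F_T$ is normalized. Fixed points have all coordinates at least $1$. If $\inf_e w_e=c>1$, then at every edge the $\max$ with $1$ is inactive, so $w_e^2=A_{u|v}A_{v|u}$. Moreover $A\ge c$ wherever $A$ is nonzero, and the loads are nonzero here because a pendant edge would force $w_e=1$. The edges can then be rescaled by $1/c$: the vector $w/c\ge 1$ satisfies $F_T(w/c)\le w/c$, since $F_T(w/c)_e=\max\{1,w_e/c\}=w_e/c$. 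So $w/c$ is feasible, hence admissible, with the same distortion $\|w\|_\infty/c$ after normalization; thus $\Dist(w)\le L$ in any case.

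\emph{Conclusion.} Either way we obtain an admissible weight of distortion at most $L$, so $\DN_{\LLY}(T)\le L$ and hence equality holds. Applying \cref{lem:monotone-iteration} to the normalized $w/\inf w$ yields a bounded normalized fixed point $x^\star$ with $\|x^\star\|_\infty\le L=\DN_{\LLY}(T)$, which is therefore optimal. When $\inf w=1$, the limit $w$ is itself this optimal fixed point, and the subsequence converges to it as claimed. The delicate case is $\inf w>1$: there the claim about the pointwise limit of the subsequence needs an extra check. The plan is to show that it cannot occur when a pendant edge exists, and otherwise to handle it via the uniqueness statement proved later.
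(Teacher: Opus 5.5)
Your restriction and compactness steps match the paper's proof and are correct. In particular they already establish $\DN_{\LLY}(T_n)\nearrow\DN_{\LLY}(T)$.

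\textbf{The gap.} The gap is in the final claim. You never show that the subsequential limit $w$ is normalized, and none of your suggested routes for the case $c:=\inf_e w_e>1$ works.
\begin{itemize}
\item Producing \emph{some} normalized optimal fixed point by iterating from $w/c$ is not enough. The lemma asserts that the limit of the subsequence itself is such a point.
\item The claim that every fixed point of $F_T$ is normalized is false. On the double ray, every constant vector $x\equiv c\ge1$ satisfies $F_T(x)=x$, because $\max\{1,\sqrt{c\cdot c}\}=c$. Your own computation only shows that $w/c$ is again a fixed point, which contradicts nothing.
\item The pendant-edge argument covers only trees with a leaf.
\item Appealing to the later uniqueness statement is circular. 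Both \cref{thm:infinite-fixed-exhaustion-intro} and \cref{thm:ends-intro} invoke this lemma to obtain a bounded normalized optimal fixed point in the first place.
\end{itemize}

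\textbf{The missing step.} It is a one-line sandwich, and you already have every ingredient. The limit $w$ is a fixed point, hence feasible and therefore admissible, so $\DN_{\LLY}(T)\le\Dist(w)$. Combining this with $\DN_{\LLY}(T)=L$ and $\sup_e w_e\le L$ gives
\[
L=\DN_{\LLY}(T)\le\Dist(w)=\frac{\sup_e w_e}{c}\le\frac{L}{c}.
\]
So $c\le1$, hence $c=1$, and also $\sup_e w_e=L$. Equivalently, your rescaled $w/c$ would be admissible with distortion at most $L/c<\DN_{\LLY}(T)$, which is impossible. With this, the limit itself is a bounded normalized optimal fixed point, with no case distinction and no forward references. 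This is exactly how the paper concludes.
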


\begin{proof}
Set $D_n=\DN_{\LLY}(T_n)$. By \cref{lem:infinite-restriction},
\[
D_n\le D_{n+1}\le\DN_{\LLY}(T),
\]
so $D_n\uparrow D_\infty\le\DN_{\LLY}(T)$. If $D_\infty=\infty$, the
claim follows. Assume $D_\infty<\infty$ and let $w^{(n)}$ be the
canonical weight of $T_n$. Then
\begin{equation}
\label{eq:finite-exhaustion-bounds}
1\le w_e^{(n)}\le D_n\le D_\infty.
\end{equation}
Enumerate \[ E(T)=\{e_1,e_2,\ldots\}. \] By \eqref{eq:finite-exhaustion-bounds}, for each fixed $e_j$ the sequence $w_{e_j}^{(n)}$, defined for all sufficiently large $n$, lies in the compact interval $[1,D_\infty]$. By successively passing to nested subsequences for $e_1,e_2,\ldots$ and then taking the diagonal subsequence, we obtain indices $n_k\to\infty$ such that $w_e^{(n_k)}$ converges for every $e\in E(T)$. Define \[ w_e^{(\infty)} := \lim_{k\to\infty}w_e^{(n_k)}. \]

Fix $e=u\sim v$. By local finiteness, all edges incident with $u$ or $v$ belong to $T_{n_k}$ for all sufficiently large $k$. Hence the complementary loads in the fixed-point equation are eventually sums over fixed finite sets of edges. Since $w_f^{(n_k)}\to w_f^{(\infty)}$ for every $f\in E(T)$, we have \[ A_{u\mid v}(w^{(n_k)}) \longrightarrow A_{u\mid v}(w^{(\infty)}), \qquad A_{v\mid u}(w^{(n_k)}) \longrightarrow A_{v\mid u}(w^{(\infty)}). \]
Passing to the limit in \eqref{eq:distortion-map} therefore gives \[ w_e^{(\infty)} = \max\left\{1, \sqrt{A_{u\mid v}(w^{(\infty)}) A_{v\mid u}(w^{(\infty)})} \right\}. \]
Thus $w^{(\infty)}$ is a bounded fixed point, and
\eqref{eq:finite-exhaustion-bounds} gives
\[
\DN_{\LLY}(T)
\le\Dist(w^{(\infty)})
\le D_\infty.
\]
Hence equality holds throughout and
$D_n\uparrow\DN_{\LLY}(T)$. Moreover
$\Dist(w^{(\infty)})=D_\infty$ while all its coordinates lie in
$[1,D_\infty]$, so $\inf_e w_e^{(\infty)}=1$. Thus
$w^{(\infty)}$ is a bounded normalized optimal fixed point. 
\end{proof}

\subsection{Ends and classification}
\label{subsec:infinite-ends}

A \emph{ray} is a simple path
\[
v_0v_1v_2\cdots,
\]
while a \emph{double ray} is a simple path
\[
\cdots v_{-1}v_0v_1\cdots
\]
indexed by $\mathbb Z$. Let $R_1,R_2\subseteq G$ be two rays. We say $R_1$ is a \emph{degree-two ray} of $G$ if $d_G(v)=2$ for any vertex $v\in V(R_1)$. They determine the same
\emph{end} if, for every finite set $S\subseteq V(G)$, sufficiently
long tails of $R_1$ and $R_2$ lie in the same connected component of
$G-S$. For a tree, this is equivalent to saying that the two rays eventually coincide; that is, they have a common tail.

The finite branch-vertex theorem \cref{thm:branch-intro} immediately yields a global restriction in the infinite setting.

\begin{lemma}[Infinite branch bound]
\label{lem:infinite-branch-set}
Let $T$ be a locally finite infinite tree and suppose
\[
D:=\DN_{\LLY}(T)<\infty.
\]
Then $B(T)$ is finite and
\begin{equation}
\label{eq:infinite-branch-bound}
|B(T)|\le\lceil D\rceil.
\end{equation}
\end{lemma}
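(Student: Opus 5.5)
The plan is to reduce the statement to the finite-tree bound \cref{thm:branch-intro} through the monotonicity of the distortion number under passage to finite subtrees. Fix an arbitrary finite set $F\subseteq B(T)$. Choose a finite subtree $T_F\subseteq T$ that contains every vertex of $F$ together with all of its neighbours in $T$. For instance, take the union of the paths joining the vertices of $N_T[F]$, which is finite by local finiteness. In $T_F$, each $v\in F$ keeps its full degree $d_v\ge3$, so $F\subseteq B(T_F)$.

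Next we compare distortion numbers. By \cref{lem:infinite-restriction}, restricting a bounded admissible weight on $T$ to $T_F$ yields an admissible weight whose distortion is no larger. Hence $\DN_{\LLY}(T_F)\le\DN_{\LLY}(T)=D$. Equivalently, one can embed $T_F$ into an exhaustion and apply the monotonicity in \cref{lem:infinite-exhaustion}. The edge case $E(T_F)=\varnothing$ does not occur once $F\neq\varnothing$, and for $F=\varnothing$ there is nothing to prove.

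Then \cref{thm:branch-intro} applied to the finite tree $T_F$ gives
\[
|F|\le|B(T_F)|\le\left\lceil\DN_{\LLY}(T_F)\right\rceil\le\lceil D\rceil,
\]
using that the ceiling is monotone. Since $F$ was an arbitrary finite subset of $B(T)$ and its cardinality is bounded uniformly by $\lceil D\rceil<\infty$, the set $B(T)$ must be finite, which gives $|B(T)|\le\lceil D\rceil$.

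The argument is routine. The only point that needs care is ensuring that branch vertices of $T$ remain branch vertices of the chosen finite subtree. This is exactly why $T_F$ must contain all neighbours of each vertex of $F$, and not merely the vertices themselves. The finiteness of $T_F$ relies on local finiteness of $T$.
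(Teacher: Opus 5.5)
Your proposal is correct and follows essentially the paper's argument: the paper takes a finite exhaustion $T_n\nearrow T$, uses $\DN_{\LLY}(T_n)\le D$, observes that any finite $F\subseteq B(T)$ lies in $B(T_n)$ for large $n$ by local finiteness, and applies \cref{thm:branch-intro}. Your single subtree $T_F\supseteq N_T[F]$, together with the restriction monotonicity of \cref{lem:infinite-restriction}, plays exactly the role of a large member of that exhaustion.
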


\begin{proof}
Let $T_n\nearrow T$ be a finite connected exhaustion. By
\cref{lem:infinite-exhaustion},
$\DN_{\LLY}(T_n)\le D$. For any finite $F\subset B(T)$, local
finiteness implies $F\subset B(T_n)$ for all large $n$. Hence
\[
|F|\le |B(T_n)|
\le\left\lceil\DN_{\LLY}(T_n)\right\rceil
\le\lceil D\rceil
\]
by \cref{thm:branch-intro}. Applying this to every finite
$F\subset B(T)$ proves the claim.
\end{proof}

We next describe the behavior of a bounded zero-curvature weight along a degree-two ray.

\begin{lemma}[Rigidity along a degree-two ray]
\label{lem:ray-rigidity}
Let $T$ be a locally finite tree and let $w$ be a positive edge weight satisfying
\[
0<\inf_e w_e\le\sup_e w_e<\infty
\]
and
\[
\kappa_{\LLY}^w(e)=0
\qquad\text{for every non-pendant edge }e.
\]
Suppose
\[
v_0v_1v_2\cdots
\]
is a ray such that $d_{v_j}=2$ for every $j\ge1$. Put
\[
a_j:=w_{v_jv_{j+1}}.
\]
Then
\begin{equation}
\label{eq:ray-constant}
a_0=a_1=a_2=\cdots.
\end{equation}
If $v_0$ is a branch vertex, then the edge $v_0v_1$ is a tie in weighted degree:
\begin{equation}
\label{eq:end-tie}
d_{v_0}^w=d_{v_1}^w=2a_0.
\end{equation}
\end{lemma}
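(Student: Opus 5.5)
We write the zero-curvature equations along the ray and reduce them to a linear recurrence. For $j\ge1$ the vertex $v_j$ has degree $2$, so $d_{v_j}^w=a_{j-1}+a_j$, and $A_{v_j\mid v_{j+1}}=a_{j-1}$. Every edge $v_jv_{j+1}$ with $j\ge1$ is non-pendant, since both endpoints have degree $2$. The edge $v_0v_1$ is non-pendant unless $v_0$ is a leaf. First assume $v_0$ is not a leaf. Then \eqref{eq:local-nonnegative} with equality gives, for $j\ge1$,
\[
a_j^2=a_{j-1}a_{j+1},
\]
and for $j=0$,
\[
a_0^2=\bigl(d_{v_0}^w-a_0\bigr)a_1 .
\]
If $v_0$ is a leaf, only the equations for $j\ge1$ are used in the first step below.

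\textbf{Step 1: constancy from $a_1$ on.} The relation $a_j^2=a_{j-1}a_{j+1}$ for $j\ge1$ says $a_{j+1}/a_j=a_j/a_{j-1}$. So the ratio $\rho:=a_1/a_0$ is constant and $a_j=a_0\rho^j$. If $\rho>1$ then $a_j\to\infty$, contradicting $\sup_e w_e<\infty$. If $\rho<1$ then $a_j\to0$, contradicting $\inf_e w_e>0$. Hence $\rho=1$, which gives \eqref{eq:ray-constant}. This holds whether or not $v_0$ is a leaf, because the equation at $j=1$ already involves $a_0$.

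\textbf{Step 2: the tie at a branch vertex.} Let $v_0$ be a branch vertex. Then $v_0v_1$ is non-pendant, so the $j=0$ equation holds. With $a_1=a_0$ it reads $a_0^2=(d_{v_0}^w-a_0)a_0$, so $d_{v_0}^w=2a_0$. Also $d_{v_1}^w=a_0+a_1=2a_0$, which gives \eqref{eq:end-tie}. Alternatively, \eqref{eq:harmonic-edge} gives $a_0=d_{v_0}^wd_{v_1}^w/(d_{v_0}^w+d_{v_1}^w)$ with $d_{v_1}^w=2a_0$, and this yields the same conclusion.

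The only delicate point is the bookkeeping. The equation at index $j=1$ must genuinely involve $a_0$ on the left, which holds because $d_{v_1}=2$. One must also use both the upper and lower bounds on $w$ to exclude geometric growth and geometric decay. There is no real obstacle beyond that.
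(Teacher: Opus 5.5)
Your proof is correct and follows the paper's own argument essentially step for step. The recurrence $a_j^2=a_{j-1}a_{j+1}$ makes the sequence geometric, the two-sided bounds on $w$ force the ratio to be $1$, and the zero-curvature equation on $v_0v_1$ with $a_1=a_0$ then gives $d_{v_0}^w=d_{v_1}^w=2a_0$.
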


\begin{proof}
Zero curvature gives \[ a_j^2=a_{j-1}a_{j+1} \qquad (j\ge1), \] and hence \[ \frac{a_{j+1}}{a_j} = \frac{a_j}{a_{j-1}}. \] Thus the ratio $a_j/a_{j-1}$ is independent of $j$. Writing $\rho:={a_1}/{a_0}$, we obtain \[ a_j=a_0\rho^j \qquad (j\ge0). \] If $\rho>1$, then $a_j\to\infty$, while if $\rho<1$, then $a_j\to0$. Both contradict \[ 0<\inf_e w_e\le\sup_e w_e<\infty. \] Therefore $\rho=1$, and hence $a_j=a_0$ for all $j\ge0$.

If $v_0$ is a branch vertex, put
$A=\sum_{z\sim v_0,\,z\ne v_1}w_{v_0z}$. The zero-curvature equation on
$v_0v_1$ becomes $a_0^2=Aa_1=Aa_0$, so $A=a_0$. Therefore
$d_{v_0}^w=d_{v_1}^w=2a_0$.
\end{proof}

\begin{proof}[Proof of \cref{thm:ends-intro}]

Assume first that $D:=\DN_{\LLY}(T)<\infty$. By
\cref{lem:infinite-exhaustion}, $T$ has a bounded optimal
fixed-point weight $w$, so every non-pendant edge has zero curvature.
By \cref{lem:infinite-branch-set}, $B(T)$ is finite and satisfies the
stated bound.

If $B(T)=\varnothing$, then $T$ is a ray or a double ray; the ray is
case \textup{(ii)} with a one-vertex finite initial tree. Assume now
$B(T)\ne\varnothing$. We show that $T$ has one end. If $T$ had two distinct ends, choose rays representing them and let $r_1,r_2$ be their last branch vertices. Then every later vertex on each ray has degree $2$, so \cref{lem:ray-rigidity} gives a tie edge leaving each $r_i$ toward its end. A
branch vertex cannot support two ties, since each tie has weight half
its weighted degree; hence $r_1\ne r_2$.

Let $r_1=u_0,\ldots,u_k=r_2$ be the unique path between them and set
$s_j=d_{u_j}^w$. The endward tie at $r_1$ uses weight $s_0/2$, so every
other incident edge has smaller weight; by \eqref{eq:harmonic-edge},
$s_1<s_0$. Similarly $s_{k-1}<s_k$. Analogous to the techniques employed in \cref{lem:terminal-configuration}, choose a maximal block on which
$s_0,\ldots,s_k$ attains its minimum. A one-vertex block gives two
strict outgoing edges, while a longer block gives a tie together with
a strict outgoing edge, both contradicting \cref{lem:one-outgoing}.
Thus $T$ cannot have two ends.

Since $B(T)$ is finite, choose a ray $R=v_0v_1\cdots$ representing the
unique end with $v_0$ beyond all branch vertices. Then $R$ is a degree-two ray of $T$. The component $K$
containing $v_0$ after deleting $v_0v_1$ is finite. Indeed, if $K$ were
infinite, then starting at $v_0$ one could recursively choose a neighbor
whose remaining component is infinite; local finiteness would thereby
produce a ray in $K$, and hence a second end of $T$. Thus $T$ is obtained
from the finite tree $K$ by attaching $R$.

Conversely, the double ray has unit weight and zero curvature. Suppose
$T$ is obtained from a finite tree $K$ by attaching a degree-two ray
$R=v_0v_1\cdots$. Note that $v_0\in V(K)$. Let $v\in V(K)$ with $v\sim v_0$. Give the ray edges weight $1$ and choose positive
weight on the edge $vv_0$ with $w_{vv_0}<1$. Apply
\cref{lem:rooted-tree-extension} to $K$, treating $v_0v_1$ as fixed
ambient data. The edges of $K$ are then nonnegatively curved. Note that the first
ray edge has complementary loads $<1$ and $1$. By \eqref{eq:local-nonnegative}, $v_0v_1$ are positively curved and every later ray edge
has zero curvature. Only finitely many weights differ from $1$, so the
distortion is finite. The branch and end assertions follow from the two
alternatives.
\end{proof}

We next prove \cref{thm:infinite-fixed-exhaustion-intro}.

\begin{proof}[Proof of \cref{thm:infinite-fixed-exhaustion-intro}] The equivalence between finite distortion and the existence of a bounded normalized fixed point is \cref{lem:infinite-fixed-criterion}. Assume henceforth that \[ D:=\DN_{\LLY}(T)<\infty. \] By \cref{lem:infinite-exhaustion}, $F_T$ has a bounded normalized optimal fixed point. We first prove uniqueness of the bounded normalized fixed point. If $T$ is a double ray, let $x$ be a bounded normalized fixed point of $F_T$, and write \[ x_i:=x_{v_iv_{i+1}} \qquad (i\in\mathbb Z) \] for its edge weights. The fixed-point equations give \[ x_i^2=x_{i-1}x_{i+1}, \] and hence \[ \frac{x_{i+1}}{x_i} = \frac{x_i}{x_{i-1}}. \] Thus the ratio of consecutive weights is constant. Boundedness in both directions forces this ratio to be $1$, and normalization then gives $x_i=1$ for every $i$. Suppose now that $T$ is in case \textup{(ii)} of \cref{thm:ends-intro}. By \cref{lem:ray-rigidity}, every bounded fixed point is constant along the degree-two ray outside a finite subtree. Hence, for two bounded normalized fixed points $x,y$, the quantity \[ Rat:=\sup_{e\in E(T)}\frac{x_e}{y_e} \] is attained. If $Rat>1$, choose an edge $e=u\sim v$ attaining it. The edge $e$ is non-pendant, since every fixed point has weight $1$ on pendant edges. Since $x_f\le Raty_f$ for every $f\in E(T)$, the fixed-point equations give \[ Rat^2 = \frac{A_{u\mid v}(x)}{A_{u\mid v}(y)} \frac{A_{v\mid u}(x)}{A_{v\mid u}(y)} \le Rat^2. \] Thus equality holds in both load estimates, and every edge adjacent to $e$ has the same ratio $R$. Repeating this along the tree reaches a pendant edge, where both fixed-point weights are $1$, a contradiction. Hence $x\le y$ coordinatewise. Exchanging $x$ and $y$ gives $x=y$. 

Denote the unique bounded normalized fixed point by $w_T^\star$. The optimal fixed point supplied by \cref{lem:infinite-exhaustion} must therefore be $w_T^\star$. Hence
\[ \DN_{\LLY}(T) = \Dist(w_T^\star) = \sup_{e\in E(T)}w_T^\star(e).
\]
We next prove uniqueness of the normalized optimal weight. Let $w$ be any normalized optimal weight. Since $w$ is bounded and feasible when regarded as a vector in $\mathcal X_T$, \cref{lem:monotone-iteration} produces a bounded normalized fixed point $x^\star\le w$. By uniqueness of the bounded normalized fixed point, \[ x^\star=w_T^\star, \] and hence \[ w\ge w_T^\star \] coordinatewise. If $D=1$, then $w\equiv1=w_T^\star$. Assume $D>1$. By \cref{thm:ends-intro,lem:ray-rigidity}, $w_T^\star$ is eventually constant outside a finite subtree, so its supremum is attained. Choose $e_0$ with \[ w_T^\star(e_0)=D. \] Suppose that $w_f>w_T^\star(f)$ for some edge $f$. Analogous to the techniques employed in \cref{thm:tree-fixed-point}, this strict inequality propagates along the unique edge path from $f$ to $e_0$. Thus \[ w_{e_0}>D, \] contradicting the normalization of $w$ and $\Dist(w)=D$. Therefore $w=w_T^\star$. Scale invariance gives uniqueness up to a global positive scalar among all optimal weights. 

Finally, let $(T_n)$ be any exhaustion of $T$. By \cref{lem:infinite-exhaustion}, \[ \DN_{\LLY}(T_n)\nearrow D. \] Every subsequence of the canonical weights $w_{T_n}^\star$ has, by the same diagonal argument used in \cref{lem:infinite-exhaustion}, a pointwise convergent subsubsequence. Let $\overline w$ be its limit. As in the proof of \cref{lem:infinite-exhaustion}, $\overline w$ is a bounded fixed point and \[ 1\le \overline w_e\le D \qquad(e\in E(T)). \] Therefore \[ D = \DN_{\LLY}(T) \le \Dist(\overline w) \le D. \] Hence $\overline w$ is a bounded normalized optimal fixed point, so by uniqueness \[ \overline w=w_T^\star. \] 
Thus every subsequence has a subsubsequence converging pointwise to $w_T^\star$, and consequently \[ w_{T_n}^\star(e)\longrightarrow w_T^\star(e) \qquad(e\in E(T)). \] \end{proof}

\begin{remark}[Nonuniqueness on general graphs]
This uniqueness phenomenon is special to trees. Let $G$ be the graph
with
\[
V(G)=\{x,y,a,b,c,d\}
\]
and
\[
E(G)=\{xy,xa,xb,ab,yc,yd\}.
\]
For $s\in[1,2]$, define
\[
w^{(s)}_{xy}=2,
\qquad
w^{(s)}_{xa}=w^{(s)}_{xb}=w^{(s)}_{yc}=w^{(s)}_{yd}=1,
\qquad
w^{(s)}_{ab}=s.
\]
The bridge $xy$ is tree-like and the edges $yc,yd$ are pendant, so
\cref{lem:tree-like-formula} and \eqref{eq:leaf-curvature} give
\[
\kappa_{\LLY}^{w^{(s)}}(x,y)=0,
\qquad
\kappa_{\LLY}^{w^{(s)}}(y,c)
=
\kappa_{\LLY}^{w^{(s)}}(y,d)
=\frac12.
\]
For the edge $xa$, normalize a dual-feasible function by $f(x)=0$ and
$f(a)=1$. The edge constraints force $f(b)\in[0,1]$, and the objective
is minimized by $f(y)=-1$ and $f(b)=1$; the values on the two leaves at
$y$ can then be chosen to give a feasible $1$-Lipschitz extension. This
gives
\[
\kappa_{\LLY}^{w^{(s)}}(x,a)
=
\kappa_{\LLY}^{w^{(s)}}(x,b)
=\frac{1}{s+1}.
\]
For $ab$, after normalizing $f(a)=0$ and $f(b)=1$, the dual objective is
independent of the remaining feasible values and equals
\[
\kappa_{\LLY}^{w^{(s)}}(a,b)
=2-\frac{1}{s+1}.
\]
Hence every $w^{(s)}$ is admissible and has distortion $2$.

On the other hand, $xy$ is a bridge and hence tree-like, with
$d_x=d_y=3$. Let $\widetilde w$ be any finite-distortion admissible
weight and put
\[
m:=\inf_{e\in E(G)}\widetilde w_e,
\qquad
M:=\sup_{e\in E(G)}\widetilde w_e.
\]
Applying \eqref{eq:local-nonnegative} to $xy$ gives
\[
\widetilde w_{xy}^2
\ge
(d_x^{\widetilde w}-\widetilde w_{xy})
(d_y^{\widetilde w}-\widetilde w_{xy})
\ge4m^2.
\]
Since $\widetilde w_{xy}\le M$, it follows that
\[
\Dist(\widetilde w)=\frac{M}{m}\ge2.
\]
Therefore $\DN_{\LLY}(G)\ge2$. Combined with the admissible family
above, this yields
\[
\DN_{\LLY}(G)=2,
\]
and the normalized optimal weights contain the nontrivial continuum
$\{w^{(s)}:1\le s\le2\}$.
\end{remark}

\section{Tree-like skeletons in connected graphs}
\label{sec:tree-like-skeleton}

\subsection{Comparison and lower bounds}
\label{subsec:skeleton-comparison}

We now transfer the tree theory to the tree-like skeleton. Arbitrary
connected-subgraph monotonicity fails; the correct comparison holds when
all retained edges are tree-like in the ambient graph. Since
$\mathcal S(G)$ may be disconnected, $\DN_{\LLY}$ is applied only to its
nontrivial connected components.

\begin{proof}[Proof of \cref{thm:skeleton-intro}]
If $E(H)=\varnothing$, then $H=K_1$ and
$\DN_{\LLY}(H)=1\le\DN_{\LLY}(G)$. If
$\DN_{\LLY}(G)=\infty$, the claim is also immediate. We may therefore
assume that $E(H)\ne\varnothing$ and restrict any finite-distortion
admissible $w$ on $G$ to $H$. For $x\sim y\in E(H)$, tree-likeness in
the ambient graph and $d_{x,H}^w\le d_{x,G}^w$,
$d_{y,H}^w\le d_{y,G}^w$ give
\[
\kappa_{\LLY}^{w,H}(x,y)\ge\kappa_{\LLY}^{w,G}(x,y)\ge0
\]
by \cref{lem:tree-like-formula}. Thus the restriction is admissible and
has no larger distortion. Taking infima proves the claim.
\end{proof}

\begin{remark}[Failure of arbitrary subgraph monotonicity]
\label{rem:no-general-subgraph-monotonicity}
The tree-like hypothesis is essential: $ST_{2,2}\subset K_6$, but
\[
\DN_{\LLY}(ST_{2,2})=2,
\qquad
\DN_{\LLY}(K_6)=1,
\]
using \eqref{eq:double-star-DN} and the positive unweighted curvature of
$K_6$. The edges of the double star lie in triangles in the ambient
complete graph and are therefore not tree-like there.
\end{remark}

\begin{corollary}[Degree bound along a tree-like edge]
\label{cor:tree-like-degree-bound}
Let $G$ be a connected locally finite graph. For every $x\sim y\in\Etl(G)$,
\begin{equation}
\label{eq:tree-like-degree-bound}
\DN_{\LLY}(G)
\ge
\sqrt{(d_x-1)(d_y-1)}.
\end{equation}
Consequently, if $\Etl(G)\ne\varnothing$, then
\begin{equation}
\label{eq:tree-like-degree-global}
\DN_{\LLY}(G)
\ge
\sup\left\{\sqrt{(d_x-1)(d_y-1)}:x\sim y\in\Etl(G)\right\}.
\end{equation}
\end{corollary}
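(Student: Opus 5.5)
The plan is to argue directly from the local inequality \eqref{eq:local-nonnegative} of \cref{lem:tree-like-formula}, in the same way as the lower bound for the six-vertex example in the remark on nonuniqueness. If $\DN_{\LLY}(G)=\infty$ there is nothing to prove, so assume it is finite. Fix any finite-distortion admissible weight $w$ on $G$ and put
\[
m:=\inf_{e\in E}w_e>0,
\qquad
M:=\sup_{e\in E}w_e<\infty .
\]

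Since $x\sim y$ is tree-like, \cref{lem:tree-like-formula} applies to it, and admissibility gives
\[
w_{xy}^2\ge\bigl(d_x^w-w_{xy}\bigr)\bigl(d_y^w-w_{xy}\bigr).
\]
The complementary load $d_x^w-w_{xy}$ is a sum of $d_x-1$ positive weights, so it is at least $(d_x-1)m$; similarly $d_y^w-w_{xy}\ge(d_y-1)m$. Combining this with $w_{xy}\le M$ gives
\[
M^2\ge(d_x-1)(d_y-1)\,m^2,
\]
that is,
\[
\Dist(w)\ge\sqrt{(d_x-1)(d_y-1)}.
\]

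Taking the infimum over all finite-distortion admissible $w$ yields \eqref{eq:tree-like-degree-bound}. The degenerate case of a pendant edge needs no separate treatment, because the right-hand side is then $0$ and the inequality is trivial. Taking the supremum over $x\sim y\in\Etl(G)$ then gives \eqref{eq:tree-like-degree-global}.

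Alternatively, one could restrict to $H$, the single edge $xy$ together with all edges at $x$ and $y$ (a double star, which is tree-like in $G$). \cref{thm:skeleton-intro} and \eqref{eq:double-star-DN} would then give the same bound. The edges at $x$ and $y$ need not be tree-like, however, so the direct argument is safer. There is no real obstacle here: the only point needing care is that the inequality uses only the tree-like property of the single edge $xy$, which is exactly what \cref{lem:tree-like-formula} requires.
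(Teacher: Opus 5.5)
Your proposal is correct and follows the paper's own proof: you fix a finite-distortion admissible weight, bound each complementary load at $x$ and $y$ below by $(d_x-1)m$ and $(d_y-1)m$, combine this with $w_{xy}\le M$ in \eqref{eq:local-nonnegative}, and then take the infimum over weights and the supremum over tree-like edges. You are also right to set aside the double-star restriction: the other edges at $x$ and $y$ need not lie in $\Etl(G)$, so \cref{thm:skeleton-intro} does not apply to that subgraph.
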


\begin{proof}
If $\DN_{\LLY}(G)=\infty$, there is nothing to prove. Otherwise, let
$w$ be any finite-distortion admissible weight and put
$m=\inf_e w_e$ and $M=\sup_e w_e$. On a tree-like edge $xy$,
\eqref{eq:local-nonnegative} gives
\[
M^2\ge w_{xy}^2
\ge(d_x-1)(d_y-1)m^2.
\]
Thus $M/m\ge\sqrt{(d_x-1)(d_y-1)}$. Taking infima over weights and then
the supremum over tree-like edges proves the two claims.
\end{proof}

Beyond these scalar comparisons, the ambient weight itself can be compared pointwise with the canonical weight on every tree component of the skeleton.

\begin{proposition}
\label{thm:canonical-skeleton-domination}
Let $G$ be connected and locally finite, let $w$ be admissible with
$\Dist(w)<\infty$, and let $T$ be a nontrivial tree component of
$\mathcal S(G)$. Put
\begin{equation}
\label{eq:skeleton-component-inf}
m_T(w):=\inf_{e\in E(T)}w_e>0.
\end{equation}
Then $\DN_{\LLY}(T)<\infty$ and, for its canonical weight $w_T^\star$,
\begin{equation}
\label{eq:canonical-skeleton-domination}
w_e\ge m_T(w)\,w_T^\star(e)
\qquad\text{for every }e\in E(T).
\end{equation}
Consequently,
\begin{equation}
\label{eq:component-distortion-domination}
\DN_{\LLY}(T)
\le
\Dist(w|_{E(T)})
\le
\Dist(w).
\end{equation}
\end{proposition}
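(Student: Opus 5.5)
The plan is to reduce everything to the tree theory by restriction and rescaling. First, $m_T(w)\ge\inf_{e\in E(G)}w_e>0$, since $\Dist(w)<\infty$. Put $m:=m_T(w)$ and $u:=w|_{E(T)}/m$. Then $u\ge1$ coordinatewise, $\inf_{e\in E(T)}u_e=1$, and $\sup u\le\Dist(w)<\infty$.

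The key point is that $u$, regarded as a vector in $\mathcal X_T$, is feasible for $F_T$. Every edge $xy\in E(T)$ is tree-like in $G$, so \cref{lem:tree-like-formula} applied in $G$ gives
\[
w_{xy}^2\ge\bigl(d_{x,G}^w-w_{xy}\bigr)\bigl(d_{y,G}^w-w_{xy}\bigr)\ge A_{x\mid y}(w|_T)\,A_{y\mid x}(w|_T).
\]
The second inequality holds because the loads in $T$ are sub-sums of the ambient loads. This is exactly the argument in the proof of \cref{thm:skeleton-intro}. Dividing by $m^2$ gives $u_{xy}^2\ge A_{x\mid y}(u)A_{y\mid x}(u)$. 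Together with $u_{xy}\ge1$, this yields $F_T(u)\le u$.

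By \eqref{eq:supersolution}, $u$ is then a bounded normalized admissible weight on $T$, so $\DN_{\LLY}(T)\le\Dist(u)<\infty$. Here $T$ is a connected subgraph all of whose edges are tree-like, so \cref{thm:skeleton-intro} also applies directly. Next apply \cref{lem:monotone-iteration} starting from $y=u$. It produces a bounded normalized fixed point $x^\star\le u$.

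Now identify $x^\star$ with $w_T^\star$. If $T$ is finite, this is the uniqueness part of \cref{thm:tree-fixed-point}. If $T$ is infinite, $\DN_{\LLY}(T)<\infty$, so \cref{thm:infinite-fixed-exhaustion-intro} says the bounded normalized fixed point is unique, and again $x^\star=w_T^\star$. This gives $w_T^\star\le u$, which is \eqref{eq:canonical-skeleton-domination}.

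For \eqref{eq:component-distortion-domination}, the first inequality follows from $\DN_{\LLY}(T)\le\Dist(u)=\Dist(w|_{E(T)})$. The second follows because the supremum over $E(T)$ is at most the supremum over $E$, and the infimum over $E(T)$ is at least the infimum over $E$.

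The only delicate points are minor: checking that the max with $1$ in $F_T$ does not interfere, which holds since $u\ge1$, and appealing to the correct uniqueness theorem in the infinite case. I expect no real obstacle beyond confirming that the pendant edges of $T$, which need not be pendant in $G$, cause no trouble. They don't, because feasibility only needs the inequality above, and that holds on every edge of $T$.
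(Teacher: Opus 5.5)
Your proposal is correct and follows essentially the same route as the paper. Both arguments restrict $w$ to $T$ and normalize by $m_T(w)$, obtain admissibility (feasibility for $F_T$) from the tree-like edge formula, run \cref{lem:monotone-iteration} to get a bounded normalized fixed point below the restriction, and identify that fixed point with $w_T^\star$ via the uniqueness parts of \cref{thm:tree-fixed-point} (finite case) and \cref{thm:infinite-fixed-exhaustion-intro} (infinite case). The only difference is that you check the feasibility inequality on $T$ explicitly, whereas the paper cites \cref{thm:skeleton-intro}, whose proof contains that check.
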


\begin{proof}
By \cref{thm:skeleton-intro}, the restricted weight $w|_{E(T)}$ is admissible. After normalization by $m_T(w)$, \cref{lem:monotone-iteration} gives a bounded normalized fixed point below it, which is $w_T^\star$ by \cref{thm:tree-fixed-point,thm:infinite-fixed-exhaustion-intro}. Thus \eqref{eq:canonical-skeleton-domination} holds. Since $w|_{E(T)}$ is admissible, \[ \DN_{\LLY}(T)\le \Dist(w|_{E(T)})\le \Dist(w), \] which is \eqref{eq:component-distortion-domination}.
\end{proof}

\begin{remark}
\label{rem:skeleton-global}
For the three-center tree in \cref{ex:three-center} with $m=2,r=1$,
the edgewise bound in \cref{cor:tree-like-degree-bound} gives only $2$, whereas the full component gives
$\DN_{\LLY}(T)=1+\sqrt3$. Thus componentwise fixed-point information is
strictly stronger than the local degree bound.
\end{remark}

\subsection{Structural consequences}
\label{subsec:skeleton-structure}

\begin{corollary}[Topology of tree-like components] \label{cor:finite-distortion-skeleton-structure} Let $G$ be a connected locally finite graph with \[ D:=\DN_{\LLY}(G)<\infty, \] and suppose that $G\ne C_n$ for every $n\ge6$. Then every nontrivial component $T$ of $\mathcal S(G)$ satisfies \[ |B(T)|\le\lceil D\rceil. \] If $T$ is infinite, then it is either a double ray or a one-ended tree obtained from a finite tree by attaching a degree-two ray. \end{corollary}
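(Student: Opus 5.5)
The plan is to chain three earlier results: \cref{thm:necessary} to make the component a tree, \cref{thm:skeleton-intro} to transfer the distortion bound to it, and then the tree theory of Sections~\ref{sec:finite-trees} and~\ref{sec:infinite-trees}.

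First, since $D<\infty$ and $G\ne C_n$ for every $n\ge6$, \cref{thm:necessary} shows that $\mathcal S(G)$ is a forest. So each nontrivial component $T$ of $\mathcal S(G)$ is a tree with at least one edge. It is a connected subgraph of $G$ whose edges all lie in $\Etl(G)$. It is also locally finite, since $G$ is.

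Second, apply \cref{thm:skeleton-intro} with $H=T$ to obtain
\[
\DN_{\LLY}(T)\le\DN_{\LLY}(G)=D<\infty .
\]
Alternatively, \cref{thm:canonical-skeleton-domination} gives the same inequality.

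Third, split according to whether $T$ is finite.

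If $T$ is finite, \cref{thm:branch-intro} gives
\[
|B(T)|\le\lceil\DN_{\LLY}(T)\rceil\le\lceil D\rceil,
\]
using that the ceiling is monotone.

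If $T$ is infinite, \cref{thm:ends-intro}, or directly \cref{lem:infinite-branch-set}, gives $|B(T)|\le\lceil\DN_{\LLY}(T)\rceil\le\lceil D\rceil$. The classification in \cref{thm:ends-intro} then says $T$ is either a double ray or a finite tree with one degree-two ray attached.

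One point needs care. Here $B(T)$ is computed with degrees taken in $T$, not in $G$; this is the natural reading, since $T$ is treated as a tree in its own right. Likewise, "degree-two ray" means degree two in $T$.

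I expect no serious obstacle. The only subtlety is checking that $\DN_{\LLY}$ is legitimately applied to $T$. This holds because $T$ is connected and locally finite, and \cref{thm:skeleton-intro} is stated for arbitrary connected subgraphs with tree-like edges.
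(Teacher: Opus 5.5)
Your proposal is correct and follows the paper's proof: \cref{thm:necessary} makes each nontrivial component $T$ a tree, \cref{thm:skeleton-intro} gives $\DN_{\LLY}(T)\le D$, and \cref{thm:branch-intro,thm:ends-intro} then give the branch bound and the infinite-tree classification. Your remark that $B(T)$ and the degree-two condition are computed with degrees in $T$ matches the intended reading.
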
 

\begin{proof}
By \cref{thm:necessary}, every nontrivial component $T$ of
$\mathcal S(G)$ is a tree, while
\cref{thm:skeleton-intro} gives $\DN_{\LLY}(T)\le D$. The assertions now follow from \cref{thm:branch-intro,thm:ends-intro}. 
\end{proof}

\begin{corollary}[Finite high-girth classification]
\label{cor:girth6}
Let $G$ be finite and connected with girth at least $6$. Then
\[
\DN_{\LLY}(G)<\infty
\quad\Longleftrightarrow\quad
G\text{ is a tree or }G=C_n\text{ for some }n\ge6.
\]
Consequently, if $|E|>|V|$ then $\DN_{\LLY}(G)=\infty$, while if $|E|=|V|$ then finite distortion occurs exactly for the cycles $C_n$, $n\ge6$.
\end{corollary}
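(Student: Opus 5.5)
The plan is to prove the two directions separately and then read off the edge-count consequences from the cycle rank.

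\emph{Sufficiency.} If $G$ is a finite tree, then \cref{thm:tree-fixed-point} supplies a canonical weight whose distortion is finite. Alternatively, \cref{lem:rooted-tree-extension} with $G=R=T$ gives an admissible positive weight directly, and finiteness of $E$ makes its distortion finite. If $G=C_n$ with $n\ge6$, then \cref{cor:unicyclic-classification} gives $\DN_{\LLY}(C_n)=1$.

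\emph{Necessity.} Assume $\DN_{\LLY}(G)<\infty$. Since $G$ has girth at least $6$, no edge lies on a cycle of length $3$, $4$ or $5$, so $\Etl(G)=E$ and $\mathcal S(G)=G$. \Cref{thm:necessary} then says that either $G=C_n$ for some $n\ge6$, or $\mathcal S(G)=G$ is a forest. In the second case connectedness makes $G$ a tree. This direction contains no genuine obstacle. The only care needed is to note that the girth hypothesis is used exactly to identify $\mathcal S(G)$ with $G$; for a tree the girth is infinite by convention, so trees are included in the hypothesis.

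\emph{Consequences.} By \eqref{eq:cycle-rank}, $\beta(G)=|E|-|V|+1$. A tree has $|E|=|V|-1$ and $C_n$ has $|E|=|V|$. So if $|E|>|V|$, neither alternative is possible and $\DN_{\LLY}(G)=\infty$. If $|E|=|V|$, then $G$ is not a tree, so finite distortion holds exactly when $G=C_n$ with $n\ge6$. Conversely every such cycle has $|E|=|V|$, girth at least $6$, and finite distortion by the sufficiency part. The main step, to the extent there is one, is simply invoking \cref{thm:necessary}; the rest is bookkeeping.
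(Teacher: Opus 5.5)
Your proposal is correct and matches the paper's proof: girth at least $6$ gives $\Etl(G)=E$, so \cref{thm:necessary} leaves only trees and $C_n$ with $n\ge6$. Sufficiency comes from \cref{thm:tree-fixed-point} and from the unit weight on $C_n$; you route the latter through \cref{cor:unicyclic-classification}, which is just \eqref{eq:tree-like-curvature}, and the edge-count statements are, as you say, bookkeeping with $\beta(G)$.
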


\begin{proof} Here $\Etl(G)=E$. Thus \cref{thm:necessary} leaves only trees and cycles
$C_n$, $n\ge6$. Trees have finite distortion by \cref{thm:tree-fixed-point},
and the unit weight gives $\DN_{\LLY}(C_n)=1$ by
\eqref{eq:tree-like-curvature}. The edge-count consequences are then
immediate.
\end{proof}

\begin{corollary}[Infinite high-girth classification]
\label{cor:infinite-high-girth}
Let $G$ be a connected locally finite infinite graph with girth at least $6$. Then
\[
\DN_{\LLY}(G)<\infty
\]
if and only if exactly one of the following holds:
\begin{enumerate}[label=\textup{(\roman*)}]
\item $G$ is a double ray;
\item $G$ is obtained from a finite tree by attaching one ray.
\end{enumerate}
\end{corollary}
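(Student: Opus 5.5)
The plan is to reduce the statement to the infinite-tree classification, \cref{thm:ends-intro}, using the fact that girth at least $6$ makes every edge tree-like. Since the girth is at least $6$, no edge lies on a cycle of length $3$, $4$ or $5$, so $\Etl(G)=E$ and $\mathcal S(G)=G$.

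\emph{Forward direction.} Assume $\DN_{\LLY}(G)<\infty$. By \cref{thm:necessary}, either $G=C_n$ for some $n\ge6$, or $\mathcal S(G)=G$ is a forest. The first alternative is excluded because $G$ is infinite. Hence $G$ is a forest, and being connected it is a locally finite infinite tree. Now \cref{thm:ends-intro} applies. It shows that $G$ is either a double ray, or is obtained from a finite tree by attaching a degree-two ray. The latter is in particular of the form (ii).

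\emph{Reverse direction.} A double ray is a tree, and \cref{thm:ends-intro} gives it finite distortion. For case (ii), I would read ``attaching one ray'' as identifying the initial vertex $v_0$ of a ray $v_0v_1\cdots$ with a vertex of a finite tree $K$, the other ray vertices being new. The resulting graph is a tree, and every $v_j$ with $j\ge1$ has degree $2$. So the attached ray is a degree-two ray in the sense of \cref{subsec:infinite-ends}, and \cref{thm:ends-intro}(ii) gives $\DN_{\LLY}(G)<\infty$.

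\emph{Exclusivity.} The words ``exactly one'' follow because a double ray has two ends, while a tree of type (ii) is one-ended: two rays in it eventually coincide with the attached tail.

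The only point needing care is this matching of ``attaching one ray'' with the degree-two-ray formulation of \cref{thm:ends-intro}. With the identification reading above it is immediate, so I expect no real obstacle.
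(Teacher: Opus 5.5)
Your proposal is correct and follows the paper's proof. Girth at least $6$ gives $\mathcal S(G)=G$; \cref{thm:necessary}, with $C_n$ excluded because $G$ is infinite, reduces to a locally finite infinite tree; and \cref{thm:ends-intro} then supplies the classification, the converse and the exclusivity. One minor point: the paper's literal definition of a degree-two ray also requires $d_G(v_0)=2$. You can arrange this trivially by absorbing the edge $v_0v_1$ into the finite tree and attaching the tail $v_1v_2\cdots$ instead.
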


\begin{proof}
Here $\mathcal S(G)=G$. Thus \cref{thm:necessary,thm:ends-intro} give
both the classification and the converse.
\end{proof}

\enlargethispage{4\baselineskip}
\section*{Acknowledgements}

The author used ChatGPT (OpenAI) for brainstorming, exploratory searches, and suggestions on exposition and LaTeX organization. All mathematical statements and references were independently verified by the author.

\end{document}